\let\OLDthebibliography\thebibliography
\renewcommand\thebibliography[1]{
  \OLDthebibliography{#1}
  \setlength{\parskip}{0pt}
  \setlength{\itemsep}{0pt plus 0.3ex} }
\numberwithin{equation}{section}
\theoremstyle{plain}
\newtheorem{theorem}{Theorem}[section]
\newtheorem{lemma}[theorem]{Lemma}
\theoremstyle{definition}
\newtheorem{remark}[theorem]{Remark}
\def\notina[#1]#2{\begingroup\def\thefootnote{\fnsymbol{footnote}}\footnote[#1]{#2}\endgroup}
\newcommand{\N}{{\mathbb N}}
\newcommand{\R}{{\mathbb R}}
\newcommand{\C}{{\mathbb C}}
\newcommand{\Z}{\mathbb Z}
\newcommand{\T}{{\mathbb T}}
\newcommand{\mA}{\mathcal{A}}
\newcommand{\mB}{\mathcal{B}}
\newcommand{\mC}{\mathcal{C}}
\newcommand{\mD}{\mathcal{D}}
\newcommand{\mE}{\mathcal{E}}
\newcommand{\mF}{\mathcal{F}}
\newcommand{\mG}{\mathcal{G}}
\newcommand{\mK}{\mathcal{K}}
\newcommand{\mM}{\mathcal{M}}
\newcommand{\mP}{\mathcal{P}}
\newcommand{\mR}{\mathcal{R}}
\renewcommand{\a}{\alpha}
\renewcommand{\b}{\beta}
\renewcommand{\d}{\delta}
\newcommand{\e}{\varepsilon}
\newcommand{\ph}{\varphi}
\newcommand{\lm}{\lambda}
\newcommand{\Om}{\Omega}
\newcommand{\gr}{\nabla}
\newcommand{\la}{\langle}
\newcommand{\ra}{\rangle}
\newcommand{\pa}{\partial}
\newcommand{\Lm}{\Lambda}
\newcommand{\be}{\begin{equation}}
\newcommand{\ee}{\end{equation}}
\newcommand{\bal}{\begin{align}} 
\newcommand{\eal}{\end{align}} 
\title{\Large{\textbf{On the normal form of the Kirchhoff equation}}}
\author{\small{Pietro Baldi, Emanuele Haus}}
\date{\small{\em In memory of Walter Craig}} 
\begin{document}

\maketitle


\begin{small} 
\noindent
\emph{Abstract}.
Consider the Kirchhoff equation
\begin{equation*}
\pa_{tt} u - \Delta u \Big( 1 + \int_{\T^d} |\gr u|^2 \Big) = 0
\end{equation*}
on the $d$-dimensional torus $\T^d$.
In a previous paper we proved that, after a first step of {\em quasilinear} normal form, the resonant cubic terms show an integrable behavior, namely they give no contribution to the energy estimates.
This leads to the question whether the same structure also emerges at the next steps of normal form. In this paper, we perform the second step and give a negative answer to the previous question: the quintic resonant terms give a nonzero contribution to the energy estimates. This is not only a formal calculation, as we prove that the normal form transformation is bounded between Sobolev spaces.

\medskip

\noindent
\emph{Keywords}.\ 
Kirchhoff equation, quasilinear wave equations, 
quasilinear normal forms.

\noindent
\emph{MSC2010}:  
35L72, 
35Q74, 
37J40, 
70K45. 
\end{small}


\section{Introduction}  
\label{sec:1}

We consider the Kirchhoff equation on the $d$-dimensional torus $\T^d$, $\T := \R / 2\pi \Z$ 
(periodic boundary conditions)  
\be \label{K1}
\pa_{tt} u - \Delta u \Big( 1 + \int_{\T^d} |\gr u|^2 \, dx \Big) = 0.
\ee
Equation \eqref{K1} is a quasilinear wave equation, 
and it has the structure of a Hamiltonian system 
\be \label{p1}
\begin{cases} 
\pa_t u = \gr_v H(u,v) = v, \\ 
\pa_t v = - \gr_u H(u,v) = \Delta u \Big( 1 + \int_{\T^d} |\gr u|^2 dx \Big),
\end{cases}
\ee
where the Hamiltonian is
\be \label{K2} 
H(u,v) = \frac12 \int_{\T^d} v^2 dx 
+ \frac12 \int_{\T^d} |\gr u|^2 dx 
+ \Big( \frac12 \int_{\T^d} |\gr u|^2 dx \Big)^2,
\ee
and $\gr_u H$, $\gr_v H$ are the gradients with respect to the 
real scalar product 
\begin{equation} \label{lara}
\la f,g \ra := \int_{\T^d} f(x) g(x) \, dx \quad \forall f,g \in L^2(\T^d, \R),
\end{equation}
namely $H'(u,v)[f,g] = \la \gr_u H(u,v) , f \ra + \la \gr_v H(u,v) , g \ra$
for all $u,v,f,g$.
More compactly, \eqref{p1} is 
\be \label{p2} 
\pa_t w = J \gr H(w),
\ee
where $w = (u,v)$, $\gr H = (\gr_u H, \gr_v H)$ and 
\be \label{def J} 
J = \begin{pmatrix} 0 & 1 \\ -1 & 0 \end{pmatrix}.
\ee
The Cauchy problem for the Kirchhoff equation is given by \eqref{K1} 
with initial data at time $t=0$
\begin{equation} \label{initial data 1} 
u(0,x) = \a(x), \quad 
u_t(0,x) = \b(x).
\end{equation}

Such a Cauchy problem is known to be locally well posed in time for initial data $(\alpha,\beta)$ in the Sobolev space $H^{\frac32}(\T^d)\times H^{\frac12}(\T^d)$ (see the work of Dickey \cite{Dickey 1969}). However, the conserved Hamiltonian \eqref{K2} only controls the $H^1\times L^2$ norm of the couple $(u,v)$. Since the local well-posedness has only been established in regularity higher than the energy space $H^1\times L^2$, it is not trivial to determine whether the solutions are global in time.
In fact, the question of global well-posedness for the Cauchy problem \eqref{K1}-\eqref{initial data 1} with periodic boundary conditions (or with Dirichlet boundary conditions on bounded domains of $\R^d$) has given rise to a long-standing open problem: while it has been known for eighty years, since the pioneering work of Bernstein \cite{Bernstein 1940}, that analytic initial data produce global-in-time solutions, it is still unknown whether the same is true for $C^\infty$ initial data, even of small amplitude.

For initial data of amplitude $\e$, the linear theory immediately gives existence of the solution over a time interval of the order of $\e^{-2}$. In \cite{Kold}, we performed one step of quasilinear normal form and established a longer existence time, of the order of $\e^{-4}$; indeed, all the cubic terms giving a nontrivial contribution to the energy estimates are erased by the normal form. One may wonder whether the same type of mechanism works also for (one or more) subsequent steps of normal form.

In this paper, we give a negative answer to such a question, as we explicitly compute the second step of normal form for the Kirchhoff equation on $\T^d$, erasing all the nonresonant terms of degree five. It turns out that, differently from what happens for cubic terms, the contribution to the energy estimates of the resonant terms of degree five is different from zero. This, of course, leaves open the question whether for small amplitude initial data the time of existence can be extended beyond the lifespan $\sim\e^{-4}$ 
(partial results in this direction are in preparation \cite{K boot}).
The presence of resonant terms of degree five that give a nontrivial contribution to the energy estimates can, however, be interpreted as a sign of non-integrability of the equation. Another interesting open question is whether these ``non-integrable'' terms in the normal form can somehow be used to construct ``weakly turbulent'' solutions pushing energy from low to high Fourier modes, in the spirit of the works \cite{CKSTT 2010}, \cite{Guardia Kaloshin 2015},  
\cite{Haus Procesi 2015}, \cite{Guardia Haus Procesi 2016}, \cite{GHHMP 2018} for the semilinear Schr\"odinger equation on $\T^2$. Proving existence of such solutions may be a very hard task, but one may at least hope to use the normal form that we compute in this paper to detect some genuinely nonlinear behavior of the flow, over long time-scales (as in \cite{Grebert Thomann 2012}, \cite{Haus Thomann 2013}) or even for all times (as in \cite{Haus Procesi CMP 2017}).

\subsection{Main result}

To give a precise statement of our main result, we introduce here the functional setting.

\smallskip
\noindent
\emph{Function space.}
On the torus $\T^d$, it is not restrictive to set the problem in the space of functions with zero average in space, for the following reason.
Given initial data $\a(x), \b(x)$, we split both them and the unknown $u(t,x)$ into the sum of a zero-mean function and the average term, 
\[
\a(x) = \a_0 + \tilde \a(x), \quad 
\b(x) = \b_0 + \tilde \b(x), \quad
u(t,x) = u_0(t) + \tilde u(t,x),
\]
where 
\[
\int_{\T^d} \tilde\a(x) \, dx = 0, \quad
\int_{\T^d} \tilde\b(x) \, dx = 0, \quad
\int_{\T^d} \tilde u(t,x) \, dx = 0 \quad \forall t.
\]
Then the Cauchy problem \eqref{K1}-\eqref{initial data 1} splits into two distinct, uncoupled Cauchy problems: one is the problem for the average $u_0(t)$, which is
\[
u_0''(t) = 0, \quad u_0(0) = \a_0, \quad u_0'(0) = \b_0
\]
and has the unique solution $u_0(t) = \a_0 + \b_0 t$; 
the other one is the problem for the zero-mean component $\tilde u(t,x)$, which is 
\[
\tilde u_{tt} - \Delta \tilde u \Big( \int_{\T^d} |\gr \tilde u|^2 \, dx \Big) = 0, 
\quad
\tilde u(0,x) = \tilde \a(x), \quad
\tilde u_t(0,x) = \tilde \b(x).
\]
Thus one has to study the Cauchy problem for the zero-mean unknown $\tilde u(t,x)$ 
with zero-mean initial data $\tilde \a(x), \tilde \b(x)$; 
this means to study \eqref{K1}-\eqref{initial data 1}
in the class of functions with zero average in $x$.

For any real $s \geq 0$, we consider the Sobolev space of zero-mean functions
\begin{align} \label{def:Hs}
H^s_0(\T^d,\C) & := \Big\{ u(x) = \sum_{j \in \Z^d \setminus \{ 0 \} } u_j e^{ij\cdot x} : 
u_j \in \C, \ 
\| u \|_s < \infty \Big\}, 
\\
\| u \|_s^2 & := \sum_{j \neq 0} |u_j|^2 |j|^{2s},
\end{align}
and its subspace 
\begin{equation} \label{def:Hs reale}
H^s_0(\T^d,\R) := \{ u \in H^s_0(\T^d,\C) : u(x) \in \R \}
\end{equation}
of real-valued functions $u$, 
for which the complex conjugates of the Fourier coefficients satisfy 
$\overline{u_j} = u_{-j}$.
For $s=0$, we write $L^2_0$ instead of $H^0_0$ the space of square-integrable functions with zero average.

\medskip

Let $m_1=1$ if the dimension $d=1$ and $m_1=2$ if $d\geq2$. For $s\geq0,\d>0$ denote
$$
B^s(\d) := \big\{ (u,v)\in H^{s + \frac12}_0(\T^d,\R) \times H^{s - \frac12}_0(\T^d,\R) \, : \, \max\{ \| u \|_{m_1 + \frac12} , \| v \|_{m_1 - \frac12} \} \leq \d \big\},
$$
$$
B^s_{\rm sym}(\d) := \{ (u,v)\in H^{s}_0(\T^d,\C) \times H^{s}_0(\T^d,\C) \, : \, u=\bar v, \, \| u \|_{m_1} \leq \d \}.
$$

\bigskip

In this paper we prove the following normal form result.

\begin{theorem}\label{thm:main}
There exists $\d > 0$ and a map $\Phi: B^{m_1}_{\rm sym}(\d) \to B^{m_1}(2\d)$, ``close to identity'' (see Remark \ref{close-to-id}), injective and conjugating system \eqref{p1} to
\[
\pa_t \begin{pmatrix} u \\ v \end{pmatrix} = W(u,v)
\]
(see \eqref{def W}-\eqref{W decomp} and the whole Section \ref{sec:second step} for the precise definition of $W$).
The transformation $\Phi$ maps $B^s_{\rm sym}(\d)$ to $B^s(2\d)$ for all $s\geq m_1$.
The transformed vector field $W$ is the sum of linear terms, cubic terms, quintic terms, and a remainder of homogeneity $\geq7$. The linear and cubic terms of $W$ give zero contribution to the energy estimates (i.e.\ the estimates for the time evolution of Sobolev norms), while the quintic terms give a nonzero contribution to the energy estimates (see \eqref{energy.est}-\eqref{18dic.4}).
\end{theorem}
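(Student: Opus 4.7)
The plan is to build $\Phi$ as the composition of three maps. First, I would pass from the real variables $(u,v) \in H_0^{s+\frac12}(\T^d,\R) \times H_0^{s-\frac12}(\T^d,\R)$ to a single complex variable $z \in H_0^s(\T^d,\C)$ via the linear symplectic diagonalization $z = \frac{1}{\sqrt 2}\bigl((-\Delta)^{1/4} u + i(-\Delta)^{-1/4} v\bigr)$; in these coordinates the quadratic Hamiltonian becomes $H_2 = \sum_j |j|\,|z_j|^2$ and the linear flow is $\partial_t z_j = -i|j|\, z_j$. Second, I would apply the cubic normal form map $\Phi_1$ constructed in~\cite{Kold}, which removes every nonresonant cubic monomial and leaves only resonant cubic terms whose contribution to $\frac{d}{dt}\|z\|_s^2$ vanishes identically (the main outcome of~\cite{Kold}). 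Third, I would build the new quintic normal form map $\Phi_2$ as the time-$1$ Hamiltonian flow of a quintic generating function $\chi_5$.

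Writing the quintic part of the vector field after cubic normalization in the basis $z_{j_1}^{\sigma_1}\cdots z_{j_5}^{\sigma_5}$ (with $z^+=z$, $z^-=\bar z$) and splitting it as $N_5 = N_5^{\rm res} + N_5^{\rm nr}$ according to whether $\sigma_1|j_1|+\cdots+\sigma_5|j_5|-|j|=0$ or not, the cohomological equation $\{\chi_5, H_2\} = -N_5^{\rm nr}$ has the explicit Fourier solution obtained by dividing each coefficient by $i(\sigma_1|j_1|+\cdots+\sigma_5|j_5|-|j|)$. Since the Kirchhoff frequencies $|j|$ are integers, the nonzero divisors have modulus $\geq 1$, so no small-divisor loss occurs. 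Tame Sobolev boundedness of $\chi_5$ and of the corresponding flow $\Phi_2$ then follows from a product estimate that exploits the special ``$\Delta u$ times an $L^2$ scalar'' form of the Kirchhoff nonlinearity: derivative losses are concentrated in one Laplacian and compensated by the shift $s\geq m_1$. The close-to-identity property, injectivity and Sobolev-to-Sobolev bounds for $\Phi$ then follow by composing the corresponding bounds for $\Phi_1$, $\Phi_2$, and the linear diagonalization, in the standard perturbative way for Lie-series transformations of homogeneity $\geq 3$.

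It remains to verify the key qualitative statement: the resonant quintic $W_5$ gives a nonzero contribution to $\frac{d}{dt}\|z\|_s^2 = 2\,\mathrm{Re}\sum_j |j|^{2s}\bar z_j\,\dot z_j$. The coefficient of $W_5$ is assembled from three ingredients: (i) the original quintic term from~\eqref{K1} rewritten in $(z,\bar z)$; (ii) the second-order Lie correction $\tfrac12\{\chi_3,\{\chi_3, H_2\}\}+\{\chi_3, N_3\}$ generated by the first normal form; (iii) the effect of $\Phi_1^{-1}$ on the lower-order terms. After restricting to the resonant set and symmetrizing, one obtains an explicit sixlinear form in $(z,\bar z)$ whose non-vanishing I would establish by specializing to a simple Fourier configuration (e.g.\ $z$ supported on two opposite modes $\pm j$), thereby reducing the question to a finite algebraic identity that can be checked by hand.

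The principal obstacles are of two kinds. The first is the algebraic bookkeeping of ingredients (i)--(iii), since cubic normalization generates a large number of quintic monomials whose signs and combinatorial multiplicities must be tracked exactly: a single miscount could artificially reproduce the cubic-level cancellation and invalidate the conclusion. The second is the quasilinear requirement that $\chi_5$ inherit the ``$\Delta \times \text{scalar}$'' structure of the Kirchhoff nonlinearity, without which the transformation would lose derivatives uncontrollably and fail to map $H^s_0$ into itself. Both obstacles are overcome by exploiting the very specific product form of the Kirchhoff nonlinearity, which propagates through the Poisson brackets at every order.
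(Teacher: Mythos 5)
Your outline misses the single most important structural step of the paper and contains a factual error that undermines a key estimate, so as written it would not yield the theorem.

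First, you jump from the linear diagonalization $z=\tfrac{1}{\sqrt2}((-\Delta)^{1/4}u+i(-\Delta)^{-1/4}v)$ directly to a cubic ``Birkhoff'' normal form $\Phi_1$, described as removing all nonresonant cubic monomials. This elides the crucial transformation $\Phi^{(3)}$ of \cite{Kold} (Lemma~\ref{lemma:Phi3 inv}), a \emph{global} nonlinear change of variables that diagonalizes the system at order one, i.e.\ removes the unbounded operator $\Lm$ from the off‑diagonal terms coupling $z$ and $\bar z$. Without it, the cubic and quintic vector fields still carry the full order $\Lm$ off the diagonal, and the cohomological equation $\{\chi_5,H_2\}=-N_5^{\rm nr}$ you write down produces an unbounded $\chi_5$ and an unbounded flow $\Phi_2$. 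That is precisely the difference the paper draws (Remark after Theorem~\ref{thm:main}) between its construction and the purely formal Craig–Worfolk computation: the boundedness of $\Phi$ on Sobolev spaces is \emph{not} a consequence of the ``product'' form of Kirchhoff's nonlinearity alone, but of the $\Phi^{(3)}$ quasilinear symmetrization which confines the $u$–$\bar u$ coupling to order zero. Your proposed ``product estimate'' does not replace it, since on its own the Poisson bracket $\{\chi_3,N_3\}$ still loses a derivative. Also note that after $\Phi^{(3)}$ the system is no longer Hamiltonian ($\Phi^{(3)}$ is not symplectic), so a generating-function/Lie-series construction in the Hamiltonian formalism does not directly apply; the paper works on the vector field via a near-identity map $(I+\mM(u,v))$ and solves the homological equation $\mD_1 \mM-\mK\mD_1 = W_5 - X_5^+$ (see \eqref{def W5}), not a Poisson-bracket equation.

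Second, your small-divisor argument is wrong for $d\ge2$: the Kirchhoff frequencies are $|j|=\sqrt{j_1^2+\cdots+j_d^2}$, not integers, so the nonzero divisors $|k|\pm|j|\pm|\ell|$ can be arbitrarily small. The paper needs a genuine quantitative lower bound (Lemma~\ref{lemma:ottobrata.1}) obtained from the integrality of the product $(|k|^2+|j|^2-|\ell|^2)^2-4|k|^2|j|^2$, giving $|1/(|k|-|j|\pm|\ell|)|\le C\,|j|\,|\ell|\,(|j|+|\ell|)$; this polynomial loss is what forces the regularity threshold $m_1=2$ for $d\ge2$ in Lemmas~\ref{lemma:ottobrata.4}–\ref{lemma:ottobrata.5}. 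With your ``modulus $\ge1$'' claim, this threshold and the tameness analysis disappear and the theorem cannot be proved in $d\ge2$. Finally, the test configuration you propose (support on $\pm j_0$) would not detect the non-vanishing of $Z_6$: the resonant quintic energy term \eqref{18dic.4} is a sum over triples with $|k|=|j|+|\ell|$, which is empty when all frequencies have the same modulus. You need at least two distinct frequency shells and $s\neq\tfrac12$ to see the nonzero factor $|k|^{2s}-|j|^{2s}-|\ell|^{2s}$.
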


\begin{remark}\label{close-to-id}
In Section \ref{sec:Lin} we will introduce the transformations $\Phi^{(1)}$ and $\Phi^{(2)}$, which symmetrize the system and introduce complex coordinates. These transformations are not close to identity. By saying that the map $\Phi$ is ``close to identity'' we mean that $\Phi=\Phi^{(1)}\circ\Phi^{(2)}\circ\Phi^{\rm next}$, where $\Phi^{\rm next}$ is bounded from $B^s_{\rm sym}(\d)$ to $B^s_{\rm sym}(2\d)$ for all $s\geq m_1$ and satisfies
$$
\| (\Phi^{\rm next} - {\rm Id}) (u,v) \|_s \leq C \| (u,v) \|_{m_1}^2 \| (u,v) \|_s.
$$
\end{remark}

\begin{remark}
There is a certain similarity between our computation and the one performed by Craig and Worfolk \cite{Craig Worfolk} for the normal form of gravity water waves. In both cases one deals with an equation whose vector field is strongly unbounded (quasilinear here, fully nonlinear in \cite{Craig Worfolk}) and in both cases the first steps of normal form show an ``integrable'' behavior, while after few steps some genuinely non-integrable terms show up.

However, there is an important difference: while the normal form computed in \cite{Craig Worfolk} is only the result of a formal computation, the transformation $\Phi$ that we construct here 
to put the Kirchhoff equation in normal form 
is a bounded transformation that is well defined between Sobolev spaces. 
This is obtained thanks to the ``quasilinear symmetrization'' performed in \cite{Kold}, 
following the strategy for quasilinear normal forms introduced by Delort in the papers \cite{Delort 2009}-\cite{Delort 2012} on quasilinear Klein-Gordon equation on $\T$.
\end{remark}

\subsection{Related literature}

Equation \eqref{K1} was introduced by Kirchhoff \cite{Kirchhoff 1876}  
to model the transversal oscillations of a clamped string or plate, 
taking into account nonlinear elastic effects.
The first results on the Cauchy problem \eqref{K1}-\eqref{initial data 1}
are due to Bernstein. In his 1940 pioneering paper \cite{Bernstein 1940}, 
he studied the Cauchy problem on an interval, with Dirichlet boundary conditions, 
and proved global wellposedness for analytic initial data $(\a,\b)$.

After that, the research on the Kirchhoff equation has been developed in various directions,
with a different kind of results on compact domains (bounded subsets of $\R^d$ with Dirichlet boundary conditions, or periodic boundary conditions $\T^d$) 
or non compact domains ($\R^d$ or ``exterior domains'' $\Om = \R^d \setminus K$, 
with $K \subset \R^d$ compact domain).

On $\R^d$, Greenberg and Hu \cite{Greenberg Hu 1980} in dimension $d=1$ 
and D'Ancona and Spagnolo \cite{D'Ancona Spagnolo 1993} in higher dimension 
proved global wellposedness with scattering for small initial data in weighted Sobolev spaces.  

On compact domains, 
dispersion, scattering and time-decay mechanisms are not available, 
and there are no results of global existence, nor of finite time blowup, 
for initial data $(\a,\b)$ of Sobolev, or $C^\infty$, or Gevrey regularity. 
The local wellposedness in the Sobolev class $H^{\frac32} \times H^{\frac12}$ 
has been proved by 
Dickey \cite{Dickey 1969} (see also Arosio and Panizzi \cite{Arosio Panizzi 1996}), 
Beyond the question about the global wellposedness for small data in Sobolev class, 
another open question concerns 
the local wellposedness in the energy space $H^1 \times L^2$ 
or in $H^s \times H^{s-1}$ for $1 < s < \frac32$. 

We also mention the recent results \cite{Baldi 2009},
\cite{Montalto 2017 KAM K forced},
\cite{Corsi Montalto 2018},
which prove the existence of time periodic or quasi-periodic solutions 
of time periodically or quasi-periodically forced Kirchhoff equations on $\T^d$,
using Nash-Moser and KAM techniques. 

For more details, generalizations
and other open questions, we refer to 
Lions \cite{Lions 1978}, to the surveys of 
Arosio \cite{Arosio 1993}, 
Spagnolo \cite{Spagnolo 1994}, 
Matsuyama and Ruzhansky \cite{Matsuyama Ruzhansky 2015}, and to other references in our previous paper \cite{Kold}.

\medskip

Concerning the normal form theory, and limiting ourselves to quasilinear PDEs on compact manifolds, we mention, in addition to the aforementioned papers of Delort \cite{Delort 2009}-\cite{Delort 2012}, the abstract result of Bambusi \cite{Bambusi 2005}
the recent literature on water waves by 
Craig and Sulem \cite{Craig Sulem 2016 BUMI},
Ifrim and Tataru \cite{Ifrim Tataru 2017},
Berti and Delort \cite{Berti Delort}, 
Berti, Feola and Pusateri \cite{Berti Feola Pusateri}-\cite{Berti Feola Pusateri nota},
and the work by Feola and Iandoli \cite{Feola Iandoli} on the quasilinear NLS on $\T$.

\bigskip

\noindent
\textbf{Acknowledgements}.
We thank Roberto Feola for some useful discussions on this subject.
This research is supported by the INdAM-GNAMPA Project 2019. 

\section{Linear transformations} \label{sec:Lin}
We start by recalling the first standard transformations in \cite{Kold},  
which transforms system \eqref{p1} into another one (see \eqref{syst uv}) 
where the linear part is diagonal, 
preserving both the real and the Hamiltonian structure of the problem.
These standard transformations are the symmetrization of the highest order 
and then the diagonalization of the linear terms.

\medskip

\emph{Symmetrization of the highest order.}
In the Sobolev spaces \eqref{def:Hs} of zero-mean functions, 
the Fourier multiplier 
\begin{equation*} 
\Lm := |D_x| : H^s_0 \to H^{s-1}_0, \quad  
e^{ij \cdot x} \mapsto |j| e^{ij \cdot x}
\end{equation*}
is invertible.   
System \eqref{p1} writes
\begin{equation} \label{1912.1}
\begin{cases} 
\pa_t u = v \\ 
\pa_t v = - ( 1 + \la \Lm u, \Lm u \ra ) \Lm^2 u,
\end{cases}
\end{equation}
where $\la \cdot , \cdot \ra$ is defined in \eqref{lara};
the Hamiltonian \eqref{K2} is
\[
H(u,v) = \frac12 \la v , v \ra + \frac12 \la \Lm u, \Lm u \ra 
+ \frac14 \la \Lm u, \Lm u \ra^2.
\]
To symmetrize the system at the highest order, 
we consider the linear, symplectic transformation 
\begin{equation} \label{1912.2}
(u,v) = \Phi^{(1)}(q,p) 
= ( \Lm^{-\frac12} q , \Lm^{\frac12} p). 
\end{equation}
System \eqref{1912.1} becomes 
\begin{equation} \label{1912.3}
\begin{cases}
\pa_t q = \Lm p \\ 
\pa_t p = - ( 1 + \la \Lm^{\frac12} q, \Lm^{\frac12} q \ra ) \Lm q,
\end{cases}
\end{equation}
which is the Hamiltonian system $\pa_t (q,p) = J \gr H^{(1)}(q,p)$ 
with Hamiltonian $H^{(1)} = H \circ \Phi^{(1)}$, namely
\begin{equation} \label{def H(1)}
H^{(1)}(q,p) 
= \frac12 \la \Lm^{\frac12} p, \Lm^{\frac12} p \ra 
+ \frac12 \la \Lm^{\frac12} q, \Lm^{\frac12} q \ra 
+ \frac14 \la \Lm^{\frac12} q, \Lm^{\frac12} q \ra^2, 
\quad 
J := \begin{pmatrix} 
0 & I \\ 
- I & 0 \end{pmatrix}.
\end{equation}
The original problem requires the ``physical'' variables $(u,v)$ to be real-valued; 
this corresponds to $(q,p)$ being real-valued too.
Also note that $\la \Lm^{\frac12} p, \Lm^{\frac12} p \ra = \la \Lm p, p\ra$.

\medskip

\emph{Diagonalization of the highest order: complex variables.}
To diagonalize the linear part $\pa_t q = \Lm p$, $\pa_t p = - \Lm q$
of system \eqref{1912.3}, we introduce complex variables. 

System \eqref{1912.3} and the Hamiltonian $H^{(1)}(q,p)$ in \eqref{def H(1)} 
are also meaningful, without any change, for \emph{complex} functions $q,p$. 
Thus we define the change of complex variables $(q,p) = \Phi^{(2)}(f,g)$ as
\begin{equation} \label{def Phi2}
(q,p) = \Phi^{(2)}(f,g) = \Big( \frac{f+g}{\sqrt2}, \frac{f-g}{i \sqrt2} \Big),
\qquad 
f = \frac{q + i p}{\sqrt2}, \quad  
g = \frac{q - i p}{\sqrt2}, 
\end{equation}
so that system \eqref{1912.3} becomes
\begin{equation} \label{syst uv}
\begin{cases}
\pa_t f = - i \Lm f - i \frac14 \la \Lm(f+g) , f+g \ra \Lm(f+g)
\\
\pa_t g = i \Lm g + i \frac14 \la \Lm(f+g) , f+g \ra \Lm(f+g)
\end{cases}
\end{equation}
where the pairing $\la \cdot , \cdot \ra$ denotes the integral of the product 
of any two complex functions
\begin{equation} \label{SPWC} 
\la w , h \ra := \int_{\T^d} w(x) h(x) \, dx 
= \sum_{j \in \Z^d \setminus \{ 0 \} } w_j h_{-j}, 
\quad w, h \in L^2(\T^d, \C).
\end{equation}
The map $\Phi^{(2)} : (f,g) \mapsto (q,p)$ in \eqref{def Phi2} 
is a $\C$-linear isomorphism of the space $L^2_0(\T^d,\C) \times L^2_0(\T^d,\C)$ 
of pairs of complex functions. 
When $(q,p)$ are real, $(f,g)$ are complex conjugate.
The restriction of $\Phi^{(2)}$ to the space
\begin{equation*} 
L^2_0(\T^d, c.c.) := 
\{ (f,g) \in L^2_0(\T^d,\C) \times L^2_0(\T^d,\C) : g = \overline{f} \}
\end{equation*}
of pairs of complex conjugate functions 
is an $\R$-linear isomorphism onto the space $L^2_0(\T^d,\R) \times L^2_0(\T^d,\R)$ 
of pairs of real functions. 
For $g = \overline{f}$, the second equation in \eqref{syst uv} is redundant, 
being the complex conjugate of the first equation. 
In other words, system \eqref{syst uv} has the following ``real structure'': 
it is of the form 
\begin{equation*} 
\pa_t \begin{pmatrix} f \\ g \end{pmatrix} 
= \mF(f,g) = \begin{pmatrix} \mF_1(f,g) \\ \mF_2(f,g) \end{pmatrix}
\end{equation*}
where the vector field $\mF(f,g)$ satisfies 
\begin{equation} \label{real vector field}
\mF_2(f, \overline{f}) = \overline{ \mF_1(f, \overline{f}) }.
\end{equation}
Under the transformation $\Phi^{(2)}$, the Hamiltonian system \eqref{1912.3} 
for complex variables $(q,p)$ becomes \eqref{syst uv}, which is the Hamiltonian system 
$\pa_t(f,g) = i J \gr H^{(2)}(f,g)$ with Hamiltonian 
$H^{(2)} = H^{(1)} \circ \Phi^{(2)}$, namely
\begin{equation*} 
H^{(2)}(f,g) = \la \Lm f, g \ra + \frac{1}{16} \la \Lm(f+g), f+g \ra^2,
\end{equation*} 
where $J$ is defined in \eqref{def H(1)}, 
$\la \cdot , \cdot \ra$ is defined in \eqref{SPWC},
and $\gr H^{(2)}$ is the gradient with respect to $\la \cdot , \cdot \ra$.
System \eqref{1912.3} for real $(q,p)$ (which corresponds to the original Kirchhoff equation)
becomes system \eqref{syst uv} restricted to the subspace $L^2_0(\T^d,c.c.)$ where 
$g = \overline{f}$. 

To complete the definition of the function spaces, 
for any real $s \geq 0$ we define 
\begin{equation*} 
H^s_0(\T^d,c.c.) := \{ (f,g) \in L^2_0(\T^d,c.c.) : f,g \in H^s_0(\T^d,\C) \}.
\end{equation*}

\section{Diagonalization of the order one}
In \cite{Kold} (Section 3) the following global transformation $\Phi^{(3)}$ is constructed.
Its effect is to remove the unbounded operator $\Lm$ from the ``off-diagonal'' terms of the equation, 
namely those terms coupling $f$ and $\bar f$.

\begin{lemma}[Lemma 3.1 of \cite{Kold}] 
\label{lemma:Phi3 inv}
Let $\Phi^{(3)}$ be the map 
\begin{equation} \label{def Phi3}
\Phi^{(3)}(\eta,\psi) = \mM(\eta,\psi) \begin{pmatrix} \eta \\ \psi \end{pmatrix}, 
\end{equation}
where $\mM(\eta,\psi)$ is the matrix
\begin{equation} \label{def M(eta,psi)}
\mM(\eta,\psi) := \frac{1}{\sqrt{1-\rho^2(P(\eta,\psi))}} 
\begin{pmatrix} 1 & \rho(P(\eta,\psi)) \\ 
\rho(P(\eta,\psi)) & 1 \end{pmatrix},
\end{equation}
$\rho$ is the function
\begin{equation} \label{def rho}
\rho(x) := \frac{- x}{1 + x + \sqrt{1+2x}}\,,
\end{equation}
$P$ is the functional
\begin{equation} \label{Q ph Q}
P(\eta,\psi) 
:= \ph(Q(\eta,\psi)), \qquad 
Q(\eta,\psi) := \frac{1}{4} \la \Lm (\eta + \psi) , \eta + \psi \ra,
\end{equation}
and $\ph$ is the inverse of the function $x \mapsto x \sqrt{1+2x}$, namely
\begin{equation} \label{def ph}
x \sqrt{1+2x} = y \quad \Leftrightarrow \quad x = \ph(y).
\end{equation}
Then, for all real $s \geq \frac12$, 
the nonlinear map $\Phi^{(3)} : H^s_0(\T^d, c.c.) \to H^s_0(\T^d, c.c.)$ 
is invertible, continuous, with continuous inverse 
\begin{equation*} 
(\Phi^{(3)})^{-1} (f,g) = \frac{1}{\sqrt{1 - \rho^2(Q(f,g))}} 
\begin{pmatrix} 1 & - \rho(Q(f,g)) \\ - \rho(Q(f,g)) & 1 \end{pmatrix} 
\begin{pmatrix} f \\ g \end{pmatrix}. 
\end{equation*}
For all $s \geq \frac12$, 
all $(\eta,\psi) \in H^s_0(\T^d,c.c.)$,  
one has 
\begin{equation*} 
\| \Phi^{(3)}(\eta,\psi) \|_s 
\leq C( \| \eta,\psi \|_{\frac12} ) \| \eta,\psi \|_s
\end{equation*}
for some increasing function $C$. 
The same estimate is satisfied by $(\Phi^{(3)})^{-1}$.
\end{lemma}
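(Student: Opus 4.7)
My approach is to reduce the lemma to a single algebraic identity linking $P$ and $Q$ through $\Phi^{(3)}$. Setting $(f,g) := \Phi^{(3)}(\eta,\psi)$ and $P := P(\eta,\psi)$, I would first read off from \eqref{def Phi3}--\eqref{def M(eta,psi)} that
\begin{equation*}
f + g = \frac{1 + \rho(P)}{\sqrt{1 - \rho^2(P)}}\,(\eta + \psi) = \sqrt{\frac{1 + \rho(P)}{1 - \rho(P)}}\,(\eta + \psi),
\end{equation*}
whence $Q(f,g) = \frac{1+\rho(P)}{1-\rho(P)}\, Q(\eta,\psi)$. Plugging in \eqref{def rho} and simplifying (the numerator becomes $1+\sqrt{1+2P}$ and the denominator factors as $\sqrt{1+2P}(\sqrt{1+2P}+1)$) yields the elementary identity $\frac{1+\rho(x)}{1-\rho(x)} = \frac{1}{\sqrt{1+2x}}$; combined with the defining relation $Q(\eta,\psi) = P\sqrt{1+2P}$ from \eqref{Q ph Q}--\eqref{def ph}, this produces the key identity
\begin{equation*}
Q\bigl(\Phi^{(3)}(\eta,\psi)\bigr) = P(\eta,\psi).
\end{equation*}
This is the main computational step, and the particular shapes of $\rho$ and $\ph$ in the statement seem precisely engineered to make it hold; everything else follows by soft arguments.

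Given the identity, invertibility reduces to $2\times 2$ linear algebra: the matrix $\mM(\eta,\psi) = \frac{1}{\sqrt{1-a^2}} \begin{pmatrix} 1 & a \\ a & 1 \end{pmatrix}$ with $a = \rho(P)$ has inverse $\frac{1}{\sqrt{1-a^2}} \begin{pmatrix} 1 & -a \\ -a & 1 \end{pmatrix}$, and since $\rho(P(\eta,\psi)) = \rho(Q(f,g))$ by the identity above, applying this inverse matrix to $(f,g)$ returns $(\eta,\psi)$ and gives exactly the stated formula for $(\Phi^{(3)})^{-1}$. A symmetric calculation (starting from $(f,g)$ and mirroring the argument with $-\rho$ in place of $\rho$) confirms two-sidedness. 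The domain and codomain are correctly both $H^s_0(\T^d, c.c.)$ because, under the constraint $\psi = \overline{\eta}$, the combination $\eta+\psi$ is real-valued, so $Q(\eta,\psi) \geq 0$ and $P, \rho(P)$ are real; a direct conjugation of the first component then gives $\overline{f} = (\overline{\eta} + \rho\overline{\psi})/\sqrt{1-\rho^2} = g$.

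The estimates and continuity rest on the structural observation that $\mM(\eta,\psi)$ is a \emph{scalar} $2\times 2$ matrix: its entries depend on $(\eta,\psi)$ only through the real scalar $P(\eta,\psi)$, hence only through $\la \Lm(\eta+\psi),\eta+\psi\ra$, which is itself controlled by $\|\eta,\psi\|_{1/2}^2$. This immediately gives
\begin{equation*}
\|\Phi^{(3)}(\eta,\psi)\|_s \leq C\bigl(\|\eta,\psi\|_{1/2}\bigr)\,\|\eta,\psi\|_s,
\end{equation*}
with $C$ continuous and increasing (since $\rho$ is smooth on $[0,\infty)$ with $|\rho|<1$ strictly); the analogous bound for $(\Phi^{(3)})^{-1}$ follows by substituting $Q(f,g)$ for $P(\eta,\psi)$. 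Continuity of $\Phi^{(3)} \colon H^s_0 \to H^s_0$ is then immediate from continuity of $P$ on $H^{1/2}_0$ (which follows from $Q$ being a continuous quadratic form and $\ph$ being continuous on $[0,\infty)$) together with the joint continuity of scalar matrix-vector multiplication. The only delicate point in the whole argument is the algebraic identity $Q \circ \Phi^{(3)} = P$; once it is in place, everything else unfolds mechanically.
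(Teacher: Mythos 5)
Your argument is correct and identifies the crux exactly: the identity $Q\bigl(\Phi^{(3)}(\eta,\psi)\bigr) = P(\eta,\psi)$, verified via $\frac{1+\rho(x)}{1-\rho(x)} = (1+2x)^{-1/2}$ and the defining relation $Q(\eta,\psi) = P\sqrt{1+2P}$. The present paper cites this lemma from \cite{Kold} rather than reproducing the proof, but the computation you give is precisely the one the definitions of $\rho$ and $\ph$ are engineered for, and your treatment of the c.c.\ structure, the scalar-matrix estimate, the two-sidedness check, and the continuity are all sound.
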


In \cite{Kold} it is proved that system \eqref{syst uv}, 
under the change of variable $(f,g) = \Phi^{(3)}(\eta, \psi)$, 
becomes 
\begin{equation} \label{syst 6 dic}
\begin{cases}
\pa_t \eta = - i \sqrt{1 + 2 P(\eta,\psi)} \, \Lm \eta
+ \dfrac{i}{4(1+2 P(\eta,\psi))} 
\Big( \la \Lm \psi, \Lm \psi \ra - \la \Lm \eta , \Lm \eta \ra \Big) \psi
\\
\pa_t \psi = i \sqrt{1 + 2 P(\eta,\psi)} \, \Lm \psi
+ \dfrac{i}{4(1+2 P(\eta,\psi))} 
\Big( \la \Lm \psi, \Lm \psi \ra - \la \Lm \eta , \Lm \eta \ra \Big) \eta.
\end{cases}
\end{equation}
Note that system \eqref{syst 6 dic} is diagonal at the order one, 
i.e.\ the coupling of $\eta$ and $\psi$ (except for the coefficients) 
is confined to terms of order zero. 
Also note that the coefficients of \eqref{syst 6 dic} are finite for $\eta,\psi \in H^1_0$, while the coefficients in \eqref{syst uv} are finite for $f,g \in H^{\frac12}_0$:  
the regularity threshold of the transformed system is $\frac12$ higher than before. 
The real structure is preserved, 
namely the second equation in \eqref{syst 6 dic} is the complex conjugate of the first one, 
or, in other words, the vector field in \eqref{syst 6 dic} satisfies property 
\eqref{real vector field}.

\medskip

\noindent
\emph{Quintic terms.} By Taylor's expansion, 
\begin{equation} \label{Taylor ph}
\ph(y) = y - y^2 + O(y^3) \quad (y \to 0).
\end{equation}
Hence 
\begin{align}
P(\eta,\psi) & = Q(\eta,\psi) - Q^2(\eta,\psi) + O(Q^3(\eta,\psi)), 
\notag \\
\frac{1}{1+2P(\eta,\psi)} 
& = 1 - 2 Q(\eta,\psi) + 6 Q^2(\eta,\psi) + O(Q^3(\eta,\psi)), 
\notag \\
\sqrt{1+2P(\eta,\psi)} 
& = 1 + Q(\eta,\psi) - \frac32 Q^2(\eta,\psi) + O(Q^3(\eta,\psi)).
\label{Taylor sqrt}
\end{align}

\medskip

\noindent
\emph{The transformed Hamiltonian.}
Even if $\Phi^{(3)}$ is not symplectic, nonetheless it could be useful to calculate 
the transformed Hamiltonian, because it is still a prime integral 
of the equation.
By definition \eqref{def rho}, one has 
\[
\frac{\rho(x)}{1 - \rho^2(x)} = \frac{-x}{2\sqrt{1+2x}}\,, \quad
\frac{1+\rho^2(x)}{1 - \rho^2(x)} = \frac{1+x}{\sqrt{1+2x}} \quad \forall x \geq 0.
\]
For $(f,g) = \Phi^{(3)}(\eta,\psi)$, one has
\[
\la \Lm f,g \ra 
= \frac{\rho(P(\eta,\psi))}{1-\rho^2(P(\eta,\psi))} 
\Big( \la \Lm \eta, \eta \ra + \la \Lm \psi, \psi \ra \Big)
+ \frac{1+\rho^2(P(\eta,\psi))}{1 - \rho^2(P(\eta,\psi))} 
\la \Lm \eta, \psi \ra
\]
and
\[
\frac{1}{16} \la \Lm(f+g), f+g \ra^2 
= Q^2(f,g)
= P^2(\eta,\psi). 
\]
Hence the new Hamiltonian $H^{(3)} := H^{(2)} \circ \Phi^{(3)}$ is
\begin{align*}
H^{(3)}(\eta,\psi) 
& = \frac{- P(\eta,\psi)}{2\sqrt{1+2 P(\eta,\psi)}} 
\Big( \la \Lm \eta, \eta \ra + \la \Lm \psi, \psi \ra \Big)
\\ & \quad \ 
+ \frac{1 + P(\eta,\psi)}{\sqrt{1+2 P(\eta,\psi)}} \la \Lm \eta, \psi \ra
+ P^2(\eta,\psi).
\end{align*}

\section{Normal form: first step} 
\label{sec:NF} 

The next step is the cancellation of the cubic terms contributing 
to the energy estimate. 
Following \cite{Kold}, we write \eqref{syst 6 dic} as
\begin{equation} \label{syst DBR}
\pa_t (\eta,\psi) = X(\eta,\psi) = \mD_1(\eta,\psi) + \mD_{\geq 3}(\eta,\psi) 
+ \mB_3(\eta,\psi) + \mR_{\geq 5}(\eta,\psi)
\end{equation}
where 
\begin{equation} \label{def mD geq 3}
\mD_1(\eta,\psi) := \begin{pmatrix} - i \Lm \eta \\ i \Lm \psi \end{pmatrix},
\quad 
\mD_{\geq 3}(\eta,\psi) := ( \sqrt{1 + 2 P(\eta,\psi)} \, - 1 ) \mD_1(\eta,\psi),
\end{equation} 
$\mB_3(\eta,\psi)$ is the cubic component of the bounded, off-diagonal term
\begin{equation} \label{def mB}
\mB_3(\eta,\psi) = 
\frac{i}{4} 
\Big( \la \Lm \psi, \Lm \psi \ra - \la \Lm \eta , \Lm \eta \ra \Big) 
\begin{pmatrix} \psi \\ \eta \end{pmatrix}
\end{equation}
and $\mR_{\geq 5}(\eta,\psi)$ is the bounded remainder of higher homogeneity degree
\begin{equation} \label{def mR geq 5}
\mR_{\geq 5}(\eta,\psi) = 
\frac{- i P(\eta,\psi)}{2 (1 + 2 P(\eta,\psi))} 
\Big( \la \Lm \psi, \Lm \psi \ra - \la \Lm \eta , \Lm \eta \ra \Big) 
\begin{pmatrix} \psi \\ \eta \end{pmatrix}.
\end{equation}
In \cite{Kold} the term $\mB_3$ 
(and not $\mD_{\geq 3}$, as it gives no contribution to the energy estimate)
is removed by the following normal form transformation.
Let 
\begin{equation} \label{def Phi4}
\Phi^{(4)}(w,z)
:= (I + M(w,z) ) \begin{pmatrix} w \\ z \end{pmatrix}, 
\end{equation}
\begin{equation} \label{def M} 
M(w,z) := \begin{pmatrix} 0 & A_{12}[w,w] + C_{12}[z,z] \\ 
A_{12}[z,z] + C_{12}[w,w] & 0 \end{pmatrix},
\end{equation}
where $A_{12}$, $C_{12}$ are the bilinear maps 
\begin{align}
\label{fix A12}
A_{12}[u, v] h
& 
:= \sum_{j,k \neq 0, \, |j| \neq |k|} 
u_j v_{-j} \frac{|j|^2}{8(|j| - |k|)} h_k e^{ik \cdot x},    
\\
C_{12}[u, v] h 
& 
:= \sum_{j,k \neq 0} u_j v_{-j} \frac{|j|^2}{8(|j| + |k|)} h_k e^{ik \cdot x}.
\label{fix C12}
\end{align}
For $d \in \N$, let
\begin{equation} \label{def m0}
m_0 = 1 \quad \text{if} \ d = 1, 
\qquad 
m_0 = \frac32 \quad \text{if} \ d \geq 2.
\end{equation} 

\begin{lemma}[Lemma 4.1 of \cite{Kold}]
\label{lemma:stime AC}
Let $A_{12}, C_{12}, m_0$ be defined in \eqref{fix A12}, \eqref{fix C12}, \eqref{def m0}.
For all complex functions $u,v,h$, all real $s \geq 0$, 
\begin{equation} \label{stima A12 basic}
\| A_{12}[u, v] h \|_s 
\leq \frac38 \| u \|_{m_0} \| v \|_{m_0} \| h \|_s,
\quad 
\| C_{12}[u, v] h \|_s 
\leq \frac{1}{16} \| u \|_1 \| v \|_1 \| h \|_s.
\end{equation}
\end{lemma}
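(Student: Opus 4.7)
The key observation is that, viewed as operators on $h$, both bilinear expressions $A_{12}[u,v]h$ and $C_{12}[u,v]h$ are diagonal Fourier multipliers: from \eqref{fix A12}--\eqref{fix C12} one reads
\[
A_{12}[u,v]h = \sum_{k\neq 0} a_k(u,v)\, h_k\, e^{ik\cdot x}, \qquad C_{12}[u,v]h = \sum_{k\neq 0} c_k(u,v)\, h_k\, e^{ik\cdot x},
\]
with symbols
\[
a_k(u,v) := \sum_{j\neq 0,\, |j|\neq |k|} u_j v_{-j}\, \frac{|j|^2}{8(|j|-|k|)}, \qquad c_k(u,v) := \sum_{j\neq 0} u_j v_{-j}\, \frac{|j|^2}{8(|j|+|k|)},
\]
independent of $h$. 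Since $\|A_{12}[u,v]h\|_s^2 = \sum_k |k|^{2s}\, |a_k(u,v)|^2\, |h_k|^2$ (and analogously for $C_{12}$), both estimates in \eqref{stima A12 basic} reduce to the uniform-in-$k$ symbol bounds $\sup_{k\neq 0} |a_k(u,v)| \leq \tfrac{3}{8}\|u\|_{m_0}\|v\|_{m_0}$ and $\sup_{k\neq 0} |c_k(u,v)| \leq \tfrac{1}{16}\|u\|_1\|v\|_1$.

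The plan is to prove each symbol bound by the triangle inequality followed by a Schur-type Cauchy--Schwarz: if $\kappa_{jk}\geq 0$ denotes the modulus of the kernel, then
\[
|a_k(u,v)| \leq \sum_j |u_j||v_{-j}|\,\kappa_{jk} \leq \Big(\sum_j |u_j|^2 \kappa_{jk}\Big)^{1/2} \Big(\sum_j |v_{-j}|^2 \kappa_{jk}\Big)^{1/2},
\]
so the task reduces to a pointwise upper bound $\kappa_{jk}\leq K\,|j|^{2m_0}$ with $K$ independent of $j,k$, after which Schur immediately yields $|a_k(u,v)|\leq K\|u\|_{m_0}\|v\|_{m_0}$ (using $|{-j}|=|j|$). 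For $C_{12}$ the kernel has no small-divisor issues: the AM--GM inequality $|j|+|k|\geq 2\sqrt{|j||k|}$ gives $|j|^2/[8(|j|+|k|)] \leq |j|^{3/2}/(16\,|k|^{1/2}) \leq |j|^2/16$ (using $|j|,|k|\geq 1$ for zero-mean functions), giving $K=\tfrac{1}{16}$ with $m_0=1$. For $A_{12}$ one must control the small divisor $|j|-|k|$: in $d=1$ the moduli $|j|\neq|k|$ are distinct positive integers, so $||j|-|k||\geq 1$ and $\kappa_{jk}\leq |j|^2/8$, giving $K=\tfrac{1}{8}\leq \tfrac{3}{8}$ with $m_0=1$. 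In $d\geq 2$ the arithmetic fact that $|j|^2,|k|^2\in\Z$ are distinct implies $||j|^2-|k|^2|\geq 1$, hence $||j|-|k||\geq 1/(|j|+|k|)$; combined with the elementary estimate $||j|-|k||\geq |k|/2$ in the regime $|k|\geq 2|j|$, this yields the uniform pointwise bound $\kappa_{jk}\leq \tfrac{3}{8}|j|^3$, giving $K=\tfrac{3}{8}$ with $m_0=\tfrac{3}{2}$.

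The main obstacle I anticipate is precisely the $d\geq 2$ pointwise bound on the kernel of $A_{12}$: the estimate must be genuinely uniform in $k$, with no residual $|k|$-dependence, otherwise the extra power of $|k|$ would cost a derivative on $h$ (since $\|h\|_s$ carries the weight $|k|^s$). This forces the regularity threshold $m_0=\tfrac{3}{2}$ rather than $1$: in the resonant regime $|j|\sim|k|$ the small divisor $||j|-|k||$ can be as small as $\sim 1/|j|$, inflating the kernel from the nominal size $\sim |j|$ up to $\sim |j|^3$, and this $|j|^3$ growth is exactly what is absorbed by the $H^{3/2}$ norms of $u$ and $v$ through the Schur inequality.
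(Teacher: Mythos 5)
Your proposal is correct. The reduction to a uniform symbol bound $\sup_k|a_k(u,v)|$, $\sup_k|c_k(u,v)|$ is the right move, since $A_{12}[u,v]$ and $C_{12}[u,v]$ are Fourier multipliers in $h$, and the kernel estimate $\kappa_{jk}\le K|j|^{2m_0}$ followed by Cauchy--Schwarz (your ``Schur" step is just Cauchy--Schwarz applied to $\sqrt{\kappa_{jk}}|u_j|$ and $\sqrt{\kappa_{jk}}|v_{-j}|$) closes the argument. The one genuinely delicate point is the two-regime analysis of $\kappa_{jk}=\tfrac{|j|^2}{8\,||j|-|k||}$ for $d\ge2$, and you handle it correctly: when $|k|<2|j|$ you have $|j|+|k|<3|j|$, and since $|j|^2,|k|^2$ are distinct positive integers, $||j|-|k||\ge\tfrac{1}{|j|+|k|}>\tfrac{1}{3|j|}$, giving $\kappa_{jk}<\tfrac{3}{8}|j|^3$; when $|k|\ge2|j|$ you have $||j|-|k||\ge|k|/2\ge|j|$, giving $\kappa_{jk}\le\tfrac{1}{8}|j|\le\tfrac{3}{8}|j|^3$. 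Combining gives exactly $K=\tfrac{3}{8}$, and similarly the $C_{12}$ bound with $K=\tfrac{1}{16}$ falls out from $|j|+|k|\ge2$. Note that the present paper only cites this lemma from~\cite{Kold} and does not reproduce a proof, so there is nothing to compare against here; but your argument is the standard and natural one, and the constants match. If you want to streamline slightly, you can skip the Schur framing and apply Cauchy--Schwarz directly once the pointwise kernel bound is in hand: $|a_k|\le K\sum_j|j|^{m_0}|u_j|\cdot|j|^{m_0}|v_{-j}|\le K\|u\|_{m_0}\|v\|_{m_0}$.
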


The differential of $\Phi^{(4)}$ at the point $(w,z)$ is 
\begin{equation} \label{der Phi4}
(\Phi^{(4)})'(w,z) 
= (I + K(w,z)),  
\qquad 
K(w,z) = M(w,z) + E(w,z),
\end{equation}
where $M(w,z)$ is defined in \eqref{def M}, and 
\begin{equation} \label{def E}
E(w,z) \begin{pmatrix} \a \\ \b \end{pmatrix}
:= \begin{pmatrix} 2 A_{12}[w,\a] z + 2 C_{12}[z,\b] z \\
2 C_{12}[w,\a] w + 2 A_{12}[z,\b] w \end{pmatrix}.
\end{equation}
To estimate matrix operators and vectors in $H^s_0(\T^d,c.c.)$, 
we define $\| (w,z) \|_s := \| w \|_s = \| z \|_s$ 
for every pair $(w,z) = (w, \overline{w})$ of complex conjugate functions.  

\begin{lemma}[Lemma 4.2 of \cite{Kold}]
\label{lemma:stime MK} 
For all $s \geq 0$, all $(w,z) \in H^{m_0}_0(\T^d, c.c.)$, 
$(\a,\b) \in H^s_0(\T^d,c.c.)$ one has
\begin{align} \label{stima M}
\Big\| M(w,z) \begin{pmatrix} \a \\ \b \end{pmatrix} \Big\|_s 
& \leq \frac{7}{16} \| w \|_{m_0}^2 \|\a \|_s ,
\\
\Big\| K(w,z) \begin{pmatrix} \a \\ \b \end{pmatrix} \Big\|_s 
& \leq \frac{7}{16} \| w \|_{m_0}^2 \|\a \|_s 
+ \frac78 \| w \|_{m_0} \| w \|_s \| \a \|_{m_0} ,
\label{stima K}
\end{align}
where $m_0$ is defined in \eqref{def m0}. 
For $\| w \|_{m_0} < \frac12$, 
the operator $(I + K(w,z)) : H^{m_0}_0(\T^d, c.c.)$ $\to H^{m_0}_0(\T^d, c.c.)$ 
is invertible, with inverse 
\[
(I + K(w,z))^{-1} = I - K(w,z) + \tilde K(w,z), \quad 
\tilde K(w,z) := \sum_{n=2}^\infty (- K(w,z))^n,
\]
satisfying
\begin{equation*} 
\Big\| (I + K(w,z))^{-1} \begin{pmatrix} \a \\ \b \end{pmatrix} \Big\|_s 
\leq C (\| \a \|_s + \| w \|_{m_0} \| w \|_s \| \a \|_{m_0}),
\end{equation*}
for all $s \geq 0$, where $C$ is a universal constant.
\end{lemma}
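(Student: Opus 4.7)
The plan is to derive each of the three estimates directly from the bilinear bounds of Lemma \ref{lemma:stime AC}, exploiting the complex-conjugate structure: for $(w,z)=(w,\bar w)$ and $(\a,\b)=(\a,\bar\a)$, one has $\|z\|_\sigma=\|w\|_\sigma$ and $\|\b\|_\sigma=\|\a\|_\sigma$ at every Sobolev order $\sigma$, so norms of second components may be freely replaced by those of the first.

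\textbf{Bounds on $M$ and $K$.} I would compute each component of $M(w,z)(\a,\b)^T$ as the sum of an $A_{12}$-piece and a $C_{12}$-piece. Lemma \ref{lemma:stime AC} gives $\frac{3}{8}\|w\|_{m_0}^2\|\a\|_s$ and, using $m_0\geq 1$, $\frac{1}{16}\|w\|_{m_0}^2\|\a\|_s$, which sum to $\frac{7}{16}\|w\|_{m_0}^2\|\a\|_s$; this yields \eqref{stima M}. For $E(w,z)(\a,\b)^T$ the same two bilinear estimates are applied, but now the ``high'' Sobolev slot falls on $w$ or $z$ and the ``low'' slot on $\a$ or $\b$; the four bilinear pieces in \eqref{def E}, together with the factor $2$, give $\frac{3}{4}+\frac{1}{8}=\frac{7}{8}$ times $\|w\|_{m_0}\|w\|_s\|\a\|_{m_0}$. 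Adding this to the $M$-bound produces \eqref{stima K}.

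\textbf{Invertibility and tame estimate on the inverse.} Specializing \eqref{stima K} to $s=m_0$ gives $\|K(w,z)v\|_{m_0}\leq \frac{21}{16}\|w\|_{m_0}^2\|v\|_{m_0}$, so when $\|w\|_{m_0}<\frac12$ the operator norm of $K$ on $H^{m_0}_0(\T^d,c.c.)$ is at most $\frac{21}{64}<1$. Hence the Neumann series $(I+K)^{-1}=\sum_{n\geq 0}(-K)^n$ converges in operator norm, giving the stated representation $I-K+\tilde K$. For the tame bound at general $s$, I would set $a=\|w\|_{m_0}$, $b=\|w\|_s$, $c=\frac{21}{16}a^2<1$ (so $\|K^n v\|_{m_0}\leq c^n\|v\|_{m_0}$), $q=\frac{7}{16}a^2<1$, and $x_n=\|K^n v\|_s$; applying \eqref{stima K} to $K^n v$ yields the scalar recursion $x_{n+1}\leq q x_n+\frac{7}{8}a b\, c^n\|v\|_{m_0}$, whose solution satisfies $x_n\leq q^n\|v\|_s+\frac{7}{8}a b\|v\|_{m_0}\sum_{k=0}^{n-1}q^{n-1-k}c^k$. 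Summing in $n$ produces a convergent double geometric series and the claimed bound $\|(I+K)^{-1}v\|_s\leq C(\|\a\|_s+\|w\|_{m_0}\|w\|_s\|\a\|_{m_0})$ with universal $C$.

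The only delicate point is the bookkeeping: preserving through the Neumann iteration the ``one high-norm factor'' tame splitting of \eqref{stima K}. This is automatic once \eqref{stima K} is in hand, because its Moser-type structure means that iterating $K$ produces a linear recursion of the form $x_{n+1}\leq q x_n+c^n\cdot(\text{low-norm})$ with both $q$ and $c$ strictly less than one, so that summation over $n$ reproduces exactly the same type of estimate as a single application of $K$.
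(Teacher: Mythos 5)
Your proof is correct and is the standard argument: decompose $K=M+E$, combine the two bilinear bounds of Lemma~\ref{lemma:stime AC} (using $m_0\ge 1$ and the complex-conjugate symmetry $\|z\|_\sigma=\|w\|_\sigma$, $\|\b\|_\sigma=\|\a\|_\sigma$) to obtain the factors $\tfrac38+\tfrac1{16}=\tfrac7{16}$ for $M$ and $2(\tfrac38+\tfrac1{16})=\tfrac78$ for $E$, then run the Neumann series with the tame recursion $x_{n+1}\le q\,x_n+\tfrac78ab\,c^n\|v\|_{m_0}$. Since the paper only cites Lemma~4.2 of \cite{Kold} without reproducing its proof, there is no internal proof to compare against, but your argument is exactly the one that statement calls for.
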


The nonlinear, continuous map $\Phi^{(4)}$ is invertible in a ball around the origin. 

\begin{lemma}[Lemma 4.3 of \cite{Kold}] 
\label{lemma:Phi4 inv}
For all $(\eta, \psi) \in H^{m_0}_0(\T^d, c.c.)$ 
in the ball $\| \eta \|_{m_0} \leq \frac14$, 
there exists a unique $(w,z) \in H^{m_0}_0(\T^d, c.c.)$ such that 
$\Phi^{(4)}(w,z) = (\eta,\psi)$, with $\| w \|_{m_0} \leq 2 \| \eta \|_{m_0}$. 
If, in addition, $\eta \in H^s_0$ for some $s > m_0$, then $w$ also belongs to $H^s_0$, 
and $\| w \|_s \leq 2 \| \eta \|_s$.
This defines the continuous inverse map $(\Phi^{(4)})^{-1} : H^s_0(\T^d, c.c.) \cap 
\{ \| \eta \|_{m_0} \leq \frac14 \}$ $\to H^s_0(\T^d, c.c.)$.
\end{lemma}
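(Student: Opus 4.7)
The plan is to recast the equation $\Phi^{(4)}(w,z) = (\eta,\psi)$ as the fixed-point problem
\[
\begin{pmatrix} w \\ z \end{pmatrix} = \mathcal{T}(w,z) := \begin{pmatrix} \eta \\ \psi \end{pmatrix} - M(w,z) \begin{pmatrix} w \\ z \end{pmatrix},
\]
and apply the Banach contraction principle in a small ball of $H^{m_0}_0(\T^d,c.c.)$. Because $A_{12}$ and $C_{12}$ are defined through Fourier coefficients in a way that respects complex conjugation (note the pattern of $j$ and $-j$ in \eqref{fix A12}, \eqref{fix C12}), the off-diagonal structure of $M$ in \eqref{def M} ensures that $\mathcal{T}$ preserves the subspace $\{z = \bar w\}$ whenever $(\eta,\psi)$ does, so I can work inside $H^{m_0}_0(\T^d,c.c.)$.

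First I would take the closed ball $B_r := \{(w,\bar w) \in H^{m_0}_0(\T^d,c.c.) : \|w\|_{m_0} \leq r\}$ with $r := 2\|\eta\|_{m_0} \leq \frac12$ (allowed by the hypothesis $\|\eta\|_{m_0}\leq\frac14$). By \eqref{stima M} applied with $(\a,\b) = (w,z)$,
\[
\|\mathcal{T}(w,z)\|_{m_0} \leq \|\eta\|_{m_0} + \tfrac{7}{16}\|w\|_{m_0}^3 \leq \|\eta\|_{m_0} + \tfrac{7}{16} r^3 \leq 2\|\eta\|_{m_0},
\]
so $\mathcal{T}$ sends $B_r$ into itself. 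For the contraction I would expand the trilinear difference $M(w_1,z_1)(w_1,z_1) - M(w_2,z_2)(w_2,z_2)$ by telescoping each slot, using the bilinearity of $A_{12}, C_{12}$: each summand contains two ``background'' factors from $\{w_i,z_i\}$, each of $H^{m_0}_0$-norm at most $r$, multiplied by a difference $w_1-w_2$ or $z_1-z_2$ in the third slot. Estimate \eqref{stima A12 basic} then yields
\[
\|\mathcal{T}(w_1,z_1) - \mathcal{T}(w_2,z_2)\|_{m_0} \leq C r^2\, \|(w_1,z_1) - (w_2,z_2)\|_{m_0},
\]
which is strictly contractive for $r \leq \frac12$. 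The Banach fixed-point theorem produces a unique $(w,z) \in B_r$, and in particular $\|w\|_{m_0} \leq 2\|\eta\|_{m_0}$.

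For the higher-regularity statement I would iterate $w_{n+1} = \eta - [M(w_n,\bar w_n)(w_n,\bar w_n)]_1$ starting from $w_0 = \eta$; the $H^{m_0}_0$ contraction gives $w_n \to w$ in $H^{m_0}_0$. Assuming inductively $\|w_n\|_s \leq 2\|\eta\|_s$ (true at $n=0$), \eqref{stima M} at regularity $s$ — where the quadratic factor is always the $H^{m_0}_0$ norm of $w_n$, controlled by $r$ — gives
\[
\|w_{n+1}\|_s \leq \|\eta\|_s + \tfrac{7}{16} r^2 \cdot 2\|\eta\|_s \leq 2\|\eta\|_s,
\]
closing the induction. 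Combining uniform $H^s$ bounds with strong $H^{m_0}$ convergence (via lower semicontinuity, or by repeating the trilinear contraction at level $s$ to obtain an $H^s$-Cauchy sequence) yields $w \in H^s_0$ with $\|w\|_s \leq 2\|\eta\|_s$. Continuity of the inverse map on each $H^s_0$ follows from the same trilinear estimate applied to two inputs $(\eta_1,\psi_1)$, $(\eta_2,\psi_2)$.

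The main obstacle is not conceptual but one of bookkeeping: in both the contraction and the regularity-propagation steps one must split the cubic difference so that the single ``high-norm'' slot falls on the difference while the two remaining slots stay at regularity $m_0$, where smallness is available. This distribution is precisely what the tame form of \eqref{stima A12 basic} — linear in $\|h\|_s$, quadratic in the $H^{m_0}$ norms of the bilinear arguments — permits, so the argument should close cleanly provided one keeps strict track of which argument carries the high norm at each telescoping step.
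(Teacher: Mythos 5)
Your proposal is correct and is essentially the standard route: rewrite $\Phi^{(4)}(w,z)=(\eta,\psi)$ as a fixed-point equation for $(w,z)$ and run a contraction argument in $H^{m_0}_0(\T^d,c.c.)$ using the tame estimate \eqref{stima M}, then propagate $H^s$ regularity by the same iteration, with the quadratic smallness factor always carried by the $H^{m_0}$ norm. This matches the strategy the paper attributes to Lemma~4.3 of \cite{Kold} (and reuses verbatim for Lemma~\ref{lemma:Phi5 inv}), where Lemmas~\ref{lemma:stime AC}--\ref{lemma:stime MK} supply exactly the bounds your telescoping step requires.
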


\begin{lemma}[Lemma 4.4 of \cite{Kold}] 
\label{lemma:3001.1}
For all complex functions $u,v,y,h$, one has 
\begin{alignat}{2} 
\label{A12 C12 self-adj}
\la A_{12}[u,v] y , h \ra 
& = \la y , A_{12}[u,v] h \ra, 
\quad \ \ & 
\la C_{12}[u,v] y , h \ra 
& = \la y , C_{12}[u,v] h \ra,
\\
\label{A12 C12 conj}
\overline{A_{12}[u,v] y} 
& = A_{12}[ \overline{u}, \overline{v} ] \overline{y}, 
\quad & 
\overline{C_{12}[u,v] y} 
& = C_{12}[ \overline{u}, \overline{v} ] \overline{y},
\\
\label{A12 C12 commu}
[ A_{12} [u,v] , \Lm^s ]  
& = 0, 
\quad & 
[ C_{12} [u,v] , \Lm^s ] 
& = 0
\end{alignat}
where $\overline{u}$ is the complex conjugate of $u$, and so on.
Moreover, for all complex $w,z$, 
\begin{equation} \label{anticommu MD}
M(w,z) \mD_1 + \mD_1 M(w,z) = 0.
\end{equation}
\end{lemma}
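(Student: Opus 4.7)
The overall observation is that for fixed $u,v$, both $A_{12}[u,v]$ and $C_{12}[u,v]$ act on any test function $h = \sum_k h_k e^{ik\cdot x}$ by multiplication in Fourier with a scalar symbol depending only on $k$ (and in fact only on $|k|$): namely, $A_{12}[u,v]h = \sum_k a(k) h_k e^{ik\cdot x}$ with $a(k) = \sum_{j\neq 0, |j|\neq|k|} u_j v_{-j} \tfrac{|j|^2}{8(|j|-|k|)}$, and similarly $C_{12}[u,v]h = \sum_k c(k) h_k e^{ik\cdot x}$ with $c(k) = \sum_{j\neq 0} u_j v_{-j} \tfrac{|j|^2}{8(|j|+|k|)}$. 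Hence both are Fourier multipliers with radial symbols, and since $\Lm^s$ is itself the Fourier multiplier $|k|^s$, the commutation identities \eqref{A12 C12 commu} follow at once.

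For the self-adjointness \eqref{A12 C12 self-adj}, I insert the Fourier expansions into the pairing and use the fact that $a(k)$ depends only on $|k|$:
\begin{align*}
\la A_{12}[u,v] y , h \ra
&= \sum_{k\neq 0} a(k)\, y_k\, h_{-k}
= \sum_{k\neq 0} y_k\, a(-k)\, h_{-k}
= \la y , A_{12}[u,v] h \ra,
\end{align*}
since $a(-k) = a(k)$; the same calculation works for $C_{12}$. For the conjugation identities \eqref{A12 C12 conj}, I use that the Fourier coefficients of $\bar u$ are $(\bar u)_j = \overline{u_{-j}}$, substitute into the definition of $A_{12}[\bar u,\bar v]\bar y$, and change summation indices $j\to -j$ and $k\to -k$; matching this with the coefficient-wise complex conjugate of $A_{12}[u,v]y$ gives the claim. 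Again the computation for $C_{12}$ is identical.

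For the anticommutation \eqref{anticommu MD}, I exploit the block structure: $\mD_1 = \diag(-i\Lm, i\Lm)$ is diagonal with opposite signs, while $M(w,z)$ is purely off-diagonal with entries $A:=A_{12}[w,w]+C_{12}[z,z]$ (top-right) and $B:=A_{12}[z,z]+C_{12}[w,w]$ (bottom-left). A direct multiplication gives that $M\mD_1 + \mD_1 M$ has zero diagonal, and its off-diagonal entries equal $A\,(i\Lm) + (-i\Lm)\,A$ and $B\,(-i\Lm) + (i\Lm)\,B$, both of which vanish by the commutation relation \eqref{A12 C12 commu}. I do not expect any genuine obstacle here: the proof is a bookkeeping exercise in Fourier variables, and the only point requiring a little attention is the index reflection $j\to -j$, $k\to -k$ in the conjugation identity, which works precisely because the sums defining $a(k)$ and $c(k)$ involve the ``balanced'' product $u_j v_{-j}$ together with symbols that are even in both $j$ and $k$.
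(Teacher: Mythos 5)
Your proof is correct, and the key observation — that for fixed $u,v$ both $A_{12}[u,v]$ and $C_{12}[u,v]$ are Fourier multipliers with radial (even) symbols $a(k)$, $c(k)$ — is exactly the structure on which the paper's citation of Lemma~4.4 of \cite{Kold} rests. The commutation \eqref{A12 C12 commu} follows because any two Fourier multipliers commute (radiality is not even needed there), radiality of $a(k),c(k)$ gives \eqref{A12 C12 self-adj} with respect to the bilinear pairing \eqref{SPWC}, the index reflection $j\to -j$ combined with the conjugation rule $(\bar u)_j=\overline{u_{-j}}$ gives \eqref{A12 C12 conj}, and \eqref{anticommu MD} then reduces, as you wrote, to $[A,\Lm]=0=[B,\Lm]$ for the off-diagonal blocks of $M$.
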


Under the change of variables $(\eta,\psi) = \Phi^{(4)}(w,z)$,
it is proved in \cite{Kold} that system \eqref{syst 6 dic} becomes 
\begin{align} 
\pa_t \begin{pmatrix} w \\ z \end{pmatrix}
& = (I + K(w,z))^{-1} X(\Phi^{(4)}(w,z))
=: X^+(w,z) 
\notag \\
& = \big( 1 + \mP(w,z) \big) \mD_1(w,z) + X_3^+(w,z)
+ X_{\geq 5}^+(w,z)
\label{def X+}
\end{align}
where
\begin{equation} \label{def mP}
\mP(w,z) := \sqrt{1 + 2 P(\Phi^{(4)}(w,z))} \, - 1,
\end{equation}
$X_3^+(w,z)$ has components
\begin{align} \label{X3+ 1}
(X_3^+)_1(w,z) 
& := - \frac{i}{4} \sum_{j,k \neq 0,\, |k| = |j|} w_j w_{-j} |j|^2 z_k e^{ik \cdot x},
\\ 
(X_3^+)_2(w,z) 
& := \frac{i}{4} \sum_{j,k \neq 0,\, |k| = |j|} z_j z_{-j} |j|^2 w_k e^{ik \cdot x},
\label{X3+ 2}
\end{align}
and
\begin{align} 
X_{\geq 5}^+(w,z)
& := K(w,z) \big( I+K(w,z) \big)^{-1} \big( \mB_3(w,z) - X_3^+(w,z) \big)
+ \mR_{\geq 5}^+(w,z)
\notag \\ & \quad \ 
- \mP(w,z) \big( I+K(w,z) \big)^{-1} \big( \mB_3(w,z) - X_3^+(w,z) \big)
\label{def X+ geq 5}
\end{align}
with
\begin{align}
\label{def R+ geq 5}
\mR_{\geq 5}^+(w,z)
& := (I + K(w,z))^{-1} \mR_{\geq 5} ( \Phi^{(4)}(w,z))
+ [ \mB_3 ( \Phi^{(4)}(w,z)) - \mB_3(w,z) ]
\notag \\ & \qquad 
+ \big( - K(w,z) + \tilde K(w,z) \big) \mB_3 ( \Phi^{(4)}(w,z)),
\end{align}
$\mR_{\geq 5}$ defined in \eqref{def mR geq 5}.

\begin{lemma}[Lemma 4.5 of \cite{Kold}] 
\label{lemma:3001.2}
The maps $M(w, \overline{w})$, $K(w,\overline{w})$, 
and the transformation $\Phi^{(4)}$ 
preserve the structure of real vector field \eqref{real vector field}.
Hence $X^+$ defined in \eqref{def X+} satisfies \eqref{real vector field}.
\end{lemma}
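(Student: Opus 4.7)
The plan is to verify the property componentwise: for an operator or map on pairs of functions, "preserving the real structure" means that when the input is a pair of complex conjugates, so is the output. All three claims then reduce to algebraic manipulations of the identities \eqref{A12 C12 conj}, and the statement about $X^+$ follows by composition.

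First I would check $M(w,\overline{w})$. Applied to a pair $(\alpha,\overline{\alpha})$, the two components are
\[
\bigl(A_{12}[w,w] + C_{12}[\overline{w},\overline{w}]\bigr)\,\overline{\alpha},
\qquad
\bigl(A_{12}[\overline{w},\overline{w}] + C_{12}[w,w]\bigr)\,\alpha.
\]
By the conjugation identities \eqref{A12 C12 conj}, the complex conjugate of the first component is $A_{12}[\overline{w},\overline{w}]\alpha + C_{12}[w,w]\alpha$, which matches the second component. Hence $M(w,\overline{w})$ maps $L^2_0(\T^d, c.c.)$ into itself. The same check applied to $E(w,\overline{w})$ in \eqref{def E} yields first/second components $2A_{12}[w,\alpha]\overline{w}+2C_{12}[\overline{w},\overline{\alpha}]\overline{w}$ and $2C_{12}[w,\alpha]w+2A_{12}[\overline{w},\overline{\alpha}]w$, which are exchanged under conjugation by \eqref{A12 C12 conj}. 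Therefore $K(w,\overline{w}) = M(w,\overline{w}) + E(w,\overline{w})$ preserves the real structure, and the same holds for any polynomial in $K$, in particular for the Neumann series defining $(I+K(w,\overline{w}))^{-1}$ of Lemma \ref{lemma:stime MK} whenever $\|w\|_{m_0}<\frac12$.

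Next, for the nonlinear map $\Phi^{(4)}$ defined in \eqref{def Phi4}, we have
\[
\Phi^{(4)}(w,\overline{w}) = (w,\overline{w}) + M(w,\overline{w})(w,\overline{w}),
\]
and the previous step applied with $\alpha=w$ shows that the right-hand side is again a pair of complex conjugates, so $\Phi^{(4)}$ sends $H^{m_0}_0(\T^d,c.c.)$ into itself.

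Finally, combining these facts with the defining formula $X^+(w,z)=(I+K(w,z))^{-1}X(\Phi^{(4)}(w,z))$: given $(w,\overline{w})\in H^{m_0}_0(\T^d,c.c.)$, the image $\Phi^{(4)}(w,\overline{w})$ is a pair of complex conjugates, hence $X$ evaluated at it produces again a pair of complex conjugates thanks to the real-structure property of system \eqref{syst 6 dic} already noted in the paper, and finally $(I+K(w,\overline{w}))^{-1}$ preserves this property. Therefore $X^+(w,\overline{w})\in L^2_0(\T^d,c.c.)$, which is exactly \eqref{real vector field} for $X^+$. No step presents a real obstacle; the only point that requires a little care is the Neumann series argument for $(I+K)^{-1}$, where one checks that each power $(-K(w,\overline{w}))^n$ individually preserves $L^2_0(\T^d,c.c.)$ (it does, by induction from the case $n=1$), so the same is true for the norm-convergent sum.
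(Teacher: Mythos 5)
Your proof is correct and follows the natural route. Since this lemma is simply restated from \cite{Kold} and not reproved in the present paper, there is no in-paper proof to compare against; your componentwise verification using the conjugation identities \eqref{A12 C12 conj}, plus stability of the real structure under sums, compositions, and norm-convergent Neumann series, is exactly the standard argument one expects in the cited source.
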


The terms $(1+\mP) \mD_1$ and $X_3^+$ in \eqref{def X+}
give no contributions to the energy estimate, 
because, as one can check directly, 
\begin{equation*} 
\la \Lm^s (1 + \mP) (- i \Lm w) , \Lm^s z \ra 
+ \la \Lm^s w , \Lm^s (1 + \mP) i \Lm z \ra = 0
\end{equation*}
and
\begin{equation}
\la \Lm^s (X_3^+)_1, \Lm^s z \ra + \la \Lm^s w , \Lm^s (X_3^+)_2 \ra 
= 0.
\label{below is here}
\end{equation}
Similarly, also $\mP X_3^+$ gives no contribution to the energy estimate, 
because  
\[
\la \Lm^s (\mP X_3^+)_1, \Lm^s z \ra + \la \Lm^s w , \Lm^s (\mP X_3^+)_2 \ra 
= \mP \la \Lm^s (X_3^+)_1, \Lm^s z \ra 
+ \mP \la \Lm^s w , \Lm^s (X_3^+)_2 \ra = 0.
\]

\begin{lemma}[Lemma 4.6 of \cite{Kold}] 
\label{lemma:elementary}
For all $s \geq 0$, all pairs of complex conjugate functions $(w,z)$, 
one has
\begin{equation} \label{stima mB3 X3+}
\| \mB_3(w,z) \|_s \leq \frac12 \| w \|_1^2 \| w \|_s, 
\quad 
\| X_3^+(w,z) \|_s \leq \frac14 \| w \|_1^2 \| w \|_s, 
\end{equation}
and, for $\| w \|_{m_0} \leq \frac12$, for all complex functions $h$,  
\begin{align} \label{stima mP}
\| \mP(w,z) h \|_s & = \mP(w,z) \| h \|_s,
\quad 
0 \leq \mP(w,z) \leq C \| w \|_{\frac12}^2,
\\ 
\label{stima mR geq 5}
\| \mR_{\geq 5}(w,z) \|_s 
& \leq 2 P(w,z) \| \mB_3(w,z) \|_s 
\leq C \| w \|_{\frac12}^2 \| w \|_1^2 \| w \|_s
\end{align}
where $\mR_{\geq 5}$ is defined in \eqref{def mR geq 5}
and $C$ is a universal constant.
\end{lemma}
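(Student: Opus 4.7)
The plan is to establish the four bounds in turn; all are direct Fourier-side estimates resting on Cauchy--Schwarz, with one slightly delicate composition step.

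For $\mB_3$ and $X_3^+$ I would exploit the explicit formulas \eqref{def mB} and \eqref{X3+ 1}--\eqref{X3+ 2}. In $\mB_3$ the prefactor is a scalar (the difference of two inner products) whose modulus is at most $\frac12 \|w\|_1^2$ by Cauchy--Schwarz applied to $\sum_j |j|^2 w_j w_{-j}$ combined with $\|z\|_1 = \|w\|_1$; multiplying by the vector $(z,w)$, whose $H^s_0$-norm equals $\|w\|_s$, produces the claimed estimate. For $X_3^+$, since $|k|=|j|$ on the support of the inner sum in \eqref{X3+ 1}, one can pull $|k|^2$ inside and apply Cauchy--Schwarz to obtain a uniform-in-$k$ bound $|k|^2 \bigl|\sum_{|j|=|k|} w_j w_{-j}\bigr| \leq \|w\|_1^2$; treating the resulting expression as a Fourier multiplier acting on $z_k$ gives $\|X_3^+\|_s \leq \frac14 \|w\|_1^2 \|z\|_s$.

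For $\mP$, the identity $\|\mP(w,z) h\|_s = \mP(w,z) \|h\|_s$ is trivial as soon as $\mP \geq 0$ is proved, since $\mP(w,z)$ is a scalar functional. I would derive nonnegativity from the c.c.\ structure: Lemma \ref{lemma:3001.2} ensures that $(\eta,\psi) := \Phi^{(4)}(w,z)$ still lies in the c.c.\ subspace, so $\eta+\psi = 2\Re \eta$ is real-valued and $Q(\eta,\psi)$ in \eqref{Q ph Q} reduces to a squared $H^{1/2}$-seminorm, hence $Q \geq 0$; then $P = \ph(Q) \geq 0$ because $\ph$ in \eqref{def ph} maps $[0,\infty)$ to $[0,\infty)$, and $\mP = \sqrt{1+2P}-1 \geq 0$. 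For the upper bound I would chain the elementary inequality $\sqrt{1+2P}-1 \leq P$, the Taylor bound $\ph(y) \leq 2y$ for small $y$ from \eqref{Taylor ph}, the Cauchy--Schwarz bound $Q \leq \|\eta\|_{1/2}^2$, and Lemma \ref{lemma:stime MK} applied to the form \eqref{def Phi4} to give $\|\Phi^{(4)}(w,z)\|_{1/2} \leq C \|w\|_{1/2}$ when $\|w\|_{m_0} \leq \frac12$.

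The bound on $\mR_{\geq 5}$ is then immediate: a direct comparison of \eqref{def mR geq 5} with \eqref{def mB} reveals the factorization
\[
\mR_{\geq 5}(w,z) = -\frac{2 P(w,z)}{1+2 P(w,z)}\, \mB_3(w,z).
\]
Since $P(w,z) \geq 0$, the scalar factor has modulus at most $2P(w,z)$, which gives $\|\mR_{\geq 5}(w,z)\|_s \leq 2 P(w,z) \|\mB_3(w,z)\|_s$; substituting the $\mB_3$ bound and $P(w,z) \leq C\|w\|_{1/2}^2$ (the latter from exactly the chain in the previous paragraph, now applied directly at $(w,z)$ without the composition with $\Phi^{(4)}$) closes \eqref{stima mR geq 5}. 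The only moderately delicate point in the whole argument is the $\mP$ estimate, where the composition $P \circ \Phi^{(4)}$ forces the joint use of the Taylor expansion of $\ph$ and of Lemma \ref{lemma:stime MK}; the remaining three bounds are essentially one-line Cauchy--Schwarz applications in Fourier series.
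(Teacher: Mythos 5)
Your proposal is correct and reaches the four bounds by the natural Fourier-side arguments; the factorization $\mR_{\geq 5}(w,z) = -\tfrac{2P(w,z)}{1+2P(w,z)}\,\mB_3(w,z)$, together with $P(w,z)=\ph(Q(w,z))\leq Q(w,z)\leq \|w\|_{\frac12}^2$ and $\sqrt{1+2P}-1\leq P$, is exactly what is needed, and the Cauchy--Schwarz bounds for $\mB_3$ and $X_3^+$ as scalar multiplications against $\binom{z}{w}$, together with $\|\Phi^{(4)}(w,z)\|_{\frac12}\leq C\|w\|_{\frac12}$ from Lemma \ref{lemma:stime MK}, close the argument. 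One tiny refinement: you do not need the restriction to ``small $y$'' when bounding $\ph$, since $\ph(y)\leq y$ holds for all $y\geq 0$ directly from the defining relation $x\sqrt{1+2x}=y$ with $x\geq 0$.
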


\begin{lemma}[Lemma 4.7 of \cite{Kold}]  
\label{lemma:4.7 of Kold} 
For all $s \geq 0$, all $(w,z) \in H^s_0(\T^d, c.c.) \cap 
H^{m_0}_0(\T^d, c.c.)$ with $\| w \|_{m_0} \leq \frac12$, one has 
\begin{equation} \label{stima X+ geq 5}
\| X_{\geq 5}^+ (w,z) \|_s 
\leq C \| w \|_1^2 \| w \|_{m_0}^2 \| w \|_s
\end{equation}
where $C$ is a universal constant.
\end{lemma}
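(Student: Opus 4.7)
My plan is to decompose $X^+_{\geq 5}$ according to the explicit formulas \eqref{def X+ geq 5}--\eqref{def R+ geq 5}, which express it as a sum of five summands, and bound each one separately. The guiding principle is that every summand factors, schematically, as a piece of ``homogeneity $\geq 3$'' in $w$ (namely one of $\mB_3$, $X_3^+$, $\mR_{\geq 5}$) composed with a piece of ``homogeneity $\geq 2$'' (one of $K$, $\tilde K$, $\mP$), possibly sandwiched with the bounded operator $(I+K)^{-1}$; multiplying a cubic tame bound by a quadratic tame bound produces the desired quintic estimate $\|w\|_1^2 \|w\|_{m_0}^2 \|w\|_s$.

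For the first summand in \eqref{def X+ geq 5} I would apply \eqref{stima K} of Lemma \ref{lemma:stime MK} to the outer $K$, the inverse estimate of the same lemma to $(I+K)^{-1}$, and the bound \eqref{stima mB3 X3+} of Lemma \ref{lemma:elementary} to $\mB_3 - X_3^+$. This yields a factor $\|w\|_{m_0}^2$ from $K$ (both terms of \eqref{stima K} combine correctly since $m_0 \geq 1$ implies $\|w\|_1 \leq \|w\|_{m_0}$) and a factor $\|w\|_1^2 \|w\|_s$ from $\mB_3 - X_3^+$, which together produce the claimed estimate. The third summand of \eqref{def X+ geq 5} is handled identically, with the quadratic factor $\|w\|_{1/2}^2 \leq \|w\|_{m_0}^2$ now arising from \eqref{stima mP} instead of from $K$.

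The middle summand $\mR^+_{\geq 5}$ splits into three pieces according to \eqref{def R+ geq 5}. The first one, $(I+K)^{-1}\mR_{\geq 5}(\Phi^{(4)}(w,z))$, is bounded by combining \eqref{stima mR geq 5} evaluated at $(\eta,\psi) = \Phi^{(4)}(w,z)$ with Lemma \ref{lemma:Phi4 inv} (which furnishes $\|\eta\|_s \leq 2\|w\|_s$ and $\|\eta\|_{m_0} \leq 2\|w\|_{m_0}$), then applying the inverse estimate of Lemma \ref{lemma:stime MK}. The third piece, $(-K+\tilde K)\mB_3(\Phi^{(4)}(w,z))$, is controlled analogously using that $\tilde K = \sum_{n\geq 2}(-K)^n$ inherits from \eqref{stima K} a quartic smallness $\|w\|_{m_0}^4$ under $\|w\|_{m_0} \leq \frac12$, which when paired with the cubic bound on $\mB_3$ gives even more than what is needed.

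The main obstacle, and the only piece that requires substantive computation, is the middle subterm $\mB_3(\Phi^{(4)}(w,z)) - \mB_3(w,z)$. Here I would exploit the trilinear (``bilinear pairing times linear argument'') structure of $\mB_3$ that is manifest in \eqref{def mB}, together with the identity $\Phi^{(4)}(w,z) - (w,z) = M(w,z)(w,z)$ from \eqref{def Phi4}. Expanding the difference as a telescoping sum of trilinear evaluations, each term features at least one entry replaced by a component of $M(w,z)(w,z)$; by \eqref{stima M} that component carries a factor $\|w\|_{m_0}^2 \|w\|_s$ (with the high Sobolev index routed to whichever slot requires it), while the remaining two arguments contribute low-norm factors bounded by $\|w\|_1^2$ via Cauchy--Schwarz on the pairing $\la \Lm\cdot,\Lm\cdot\ra$. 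Summing all these contributions yields the quintic bound $\|w\|_{m_0}^2 \|w\|_1^2 \|w\|_s$, and combining with the preceding estimates proves \eqref{stima X+ geq 5}.
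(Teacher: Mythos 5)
The paper does not reproduce the proof of this statement (it is cited verbatim from Lemma~4.7 of \cite{Kold}), so a line-by-line comparison is not possible; but your proposal follows exactly the decomposition that formulas \eqref{def X+ geq 5}--\eqref{def R+ geq 5} suggest, combined with the auxiliary estimates of Lemmas~\ref{lemma:stime MK}, \ref{lemma:Phi4 inv} and \ref{lemma:elementary}, which is clearly the intended route, and the conclusion follows. The one correction worth making is a misattribution: for the term $(I+K)^{-1}\mR_{\geq 5}(\Phi^{(4)}(w,z))$ (and likewise for $\mB_3(\Phi^{(4)}(w,z))$ in the other subterms of $\mR_{\geq 5}^+$) you invoke Lemma~\ref{lemma:Phi4 inv} to claim $\|\Phi^{(4)}(w,z)\|_s \leq 2\|w\|_s$, but that lemma gives the \emph{reverse} inequality $\|w\|_s \leq 2\|\eta\|_s$ for the preimage under $\Phi^{(4)}$. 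The bound you actually need is immediate from \eqref{def Phi4} and \eqref{stima M}:
\begin{equation*}
\| \Phi^{(4)}(w,z) \|_s \leq \| w \|_s + \tfrac{7}{16}\| w \|_{m_0}^2 \| w \|_s \leq 2 \| w \|_s \quad \text{for } \| w \|_{m_0} \leq \tfrac12,
\end{equation*}
so the slip does not damage the argument. A lesser point: in the subterm $(-K+\tilde K)\mB_3(\Phi^{(4)}(w,z))$ you stress the quartic smallness of $\tilde K$ and say it gives ``even more than needed,'' but the leading piece is $-K$, which gives precisely the quadratic factor $\|w\|_{m_0}^2$ via \eqref{stima K}; it would have been cleaner to note that $-K$ already supplies the required smallness and $\tilde K$ only improves on it. With these two clarifications your estimate is complete: each of the five summands pairs a cubic factor in $H^1/H^s$ with a quadratic prefactor in $\|w\|_{m_0}^2$, and the telescoping for $\mB_3(\Phi^{(4)}(w,z)) - \mB_3(w,z)$ correctly places the high Sobolev index on the outer argument using the tame bound \eqref{stima M} in both the $H^1$ (pairing) and $H^s$ (outer) slots.
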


\emph{Quintic terms.} 
Now we extract the terms of quintic homogeneity order 
from $X^+_{\geq 5}(w,z)$. 
Using \eqref{def X+ geq 5},
\eqref{def R+ geq 5},
\eqref{Taylor sqrt},
\eqref{Q ph Q},
\eqref{def Phi4},
we calculate 
\begin{equation} \label{X+ 57}
X^+_{\geq 5}(w,z) = \mP(w,z) X_3^+(w,z) + X^+_5(w,z) + X^+_{\geq 7}(w,z)
\end{equation}
where 
\begin{align} 
X^+_5(w,z) & := - K(w,z) X_3^+(w,z) 
- 3 Q(w,z) \mB_3(w,z) 
+ \mB_3'(w,z) M(w,z) \begin{pmatrix} w \\ z \end{pmatrix}
\label{X+ 5}
\end{align}
and $X^+_{\geq 7}(w,z)$ is defined in \eqref{X+ 57} by difference.
As already observed, the term $\mP(w,z) X_3^+(w,z)$ in \eqref{X+ 57} 
gives no contributions to the energy estimate.  
By \eqref{def X+}, \eqref{X+ 57}, the complete vector field is 
\begin{equation} \label{X+ new}
X^+(w,z) = (1 + \mP(w,z)) \big( \mD_1(w,z) + X_3^+(w,z) \big) 
+ X_5^+(w,z) + X_{\geq 7}^+(w,z).
\end{equation}
Moreover, adapting the proof of Lemma \ref{lemma:4.7 of Kold}, 
we obtain the following bounds. 

\begin{lemma} \label{lemma:20.05.2020} 
For all $s \geq 0$, all $(w,z) \in H^s_0(\T^d, c.c.) \cap 
H^{m_0}_0(\T^d, c.c.)$ with $\| w \|_{m_0} \leq \frac12$, one has 
\begin{equation*} 
\| X_{5}^+ (w,z) \|_s 
\leq C \| w \|_{m_0}^4 \| w \|_s,
\quad \ 
\| X^+_{\geq 7}(w,z) \|_s \leq C \| w \|_{m_0}^6 \| w \|_s,
\end{equation*}
where $C$ is a universal constant.
\end{lemma}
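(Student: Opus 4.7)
My plan is to establish the two bounds by refining the term-by-term argument of Lemma \ref{lemma:4.7 of Kold}, carefully keeping track of how many factors of $\|w\|_{m_0}$ each expression carries. The underlying observation is that, since $m_0 \geq 1$, every ``small'' norm $\|w\|_{1/2}$ or $\|w\|_1$ appearing in Lemmas \ref{lemma:stime AC}, \ref{lemma:stime MK}, \ref{lemma:elementary} is controlled by $\|w\|_{m_0}$, while each product carries a single factor of $\|w\|_s$. Counting the number of low-norm factors in a quintic (resp.\ septic) expression then delivers the fourth (resp.\ sixth) power of $\|w\|_{m_0}$.

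For $X_5^+$ I would start from the explicit decomposition \eqref{X+ 5} and treat the three pieces in turn. The first, $-K(w,z) X_3^+(w,z)$, is handled by \eqref{stima K} together with $\|X_3^+(w,z)\|_s \leq \frac14\|w\|_{m_0}^2\|w\|_s$ from \eqref{stima mB3 X3+}. The second, $-3 Q(w,z)\mB_3(w,z)$, is a scalar times a vector, with $|Q(w,z)|\leq C\|w\|_{m_0}^2$ and $\|\mB_3(w,z)\|_s$ controlled by \eqref{stima mB3 X3+}. The third, $\mB_3'(w,z) M(w,z)\binom{w}{z}$, is obtained by differentiating the trilinear map \eqref{def mB} and combining the bilinear structure of $\mB_3'$ with the two bounds $\|M(w,z)\binom{w}{z}\|_{m_0} \leq C\|w\|_{m_0}^3$ and $\|M(w,z)\binom{w}{z}\|_s \leq C\|w\|_{m_0}^2\|w\|_s$ coming from \eqref{stima M}.

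For $X_{\geq 7}^+$ I would use the definition $X_{\geq 7}^+ = X_{\geq 5}^+ - \mP X_3^+ - X_5^+$ from \eqref{X+ 57}, expand $X_{\geq 5}^+$ via \eqref{def X+ geq 5}-\eqref{def R+ geq 5}, and observe that the quintic portions of the expansion reassemble, by construction, into $\mP X_3^+ + X_5^+$, leaving only higher-order Taylor remainders. The relevant remainders are: (i) $\mP(w,z) - Q(w,z) = O(\|w\|_{m_0}^4)$ as a scalar, from the Taylor expansion \eqref{Taylor sqrt} together with the expansion of $P \circ \Phi^{(4)}$ around $Q(w,z)$ using the cubic increment $M(w,z)\binom{w}{z}$; (ii) the tail $\tilde K(w,z)$ of Lemma \ref{lemma:stime MK}, which is quartic in $K$ and hence satisfies $\|\tilde K(w,z) V\|_s \leq C\|w\|_{m_0}^4 \|V\|_s$; (iii) the second-order Taylor remainder of the cubic $\mB_3$ at $(w,z)$ with increment $M(w,z)\binom{w}{z}$, of order seven since the increment is cubic; (iv) the corrector $-\frac{2P}{1+2P} + 2Q = O(\|w\|_{m_0}^4)$ inside $\mR_{\geq 5}$, plus the higher-order difference $\mR_{\geq 5}(\Phi^{(4)}(w,z)) - \mR_{\geq 5}(w,z)$. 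Each remainder carries an extra factor $\|w\|_{m_0}^2$ beyond the quintic scaling, yielding the stated bound.

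The main obstacle is the bookkeeping in the $X_{\geq 7}^+$ step: one must verify that the quintic portions of the four summands in \eqref{def X+ geq 5}-\eqref{def R+ geq 5} reassemble exactly into $\mP X_3^+ + X_5^+$, so that no quintic contamination survives in the septic remainder. This amounts to re-deriving \eqref{X+ 57} with quantitative remainder control, using the Taylor expansions \eqref{Taylor ph}-\eqref{Taylor sqrt}. Once the cancellation is secured, each residual term is controlled by a direct application of Lemmas \ref{lemma:stime AC}, \ref{lemma:stime MK}, \ref{lemma:elementary}, in the same spirit as Lemma \ref{lemma:4.7 of Kold}.
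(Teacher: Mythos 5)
Your proposal matches the approach the paper implicitly invokes when it states that the lemma follows by ``adapting the proof of Lemma \ref{lemma:4.7 of Kold}'': a term-by-term application of Lemmas \ref{lemma:stime AC}, \ref{lemma:stime MK}, \ref{lemma:elementary} to the explicit decomposition \eqref{X+ 5} and to the difference $X^+_{\geq 7} = X^+_{\geq 5} - \mP X_3^+ - X_5^+$, with the observation that $m_0 \geq 1$ lets one absorb the low norms $\|w\|_{1/2}$, $\|w\|_1$ into $\|w\|_{m_0}$. One minor slip: $\tilde K = \sum_{n \geq 2}(-K)^n$ is quadratic in $K$ (hence quartic in $(w,z)$), not ``quartic in $K$'', and its estimate is tame, namely $\| \tilde K(w,z) V \|_s \leq C \|w\|_{m_0}^4 \|V\|_s + C \|w\|_{m_0}^3 \|w\|_s \|V\|_{m_0}$, rather than the uniform bound you state; this is harmless here because every $V$ to which you apply $\tilde K$ (such as $X_3^+$, $\mB_3$, $X_5^+$ or their differences along $\Phi^{(4)}$) already carries a tame bound $\|V\|_\sigma \leq C \|w\|_{m_0}^2 \|w\|_\sigma$, so the count still closes at $\|w\|_{m_0}^6 \|w\|_s$.
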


We analyze the terms in \eqref{X+ 5}.  
By \eqref{der Phi4}, \eqref{def E}, 
the first component of $K(w,z) X_3^+(w,z)$ is
\begin{align*}
( K(w,z) X_3^+(w,z) )_1 
& = A_{12}[w,w] (X_3^+)_2(w,z) 
+ C_{12}[z,z] (X_3^+)_2(w,z) 
\notag \\ & \quad \ 
+ 2 A_{12}[w, (X_3^+)_1(w,z)] z
+ 2 C_{12}[z, (X_3^+)_2(w,z)] z,
\end{align*}
and its second component is the conjugate of the first one.
Recalling \eqref{def mB}, 
the first component of the last term in \eqref{X+ 5} is
\begin{align*}
\Big( \mB_3'(w,z) M(w,z) \tbinom{w}{z} \Big)_1 
& = \frac{i}{2} \Big( \la \Lm z, \Lm \b \ra - \la \Lm w, \Lm \a \ra \Big) z 
+ \frac{i}{4} \Big( \la \Lm z, \Lm z \ra - \la \Lm w, \Lm w \ra \Big) \b
\end{align*}
with 
\[
\a = A_{12}[w,w] z + C_{12}[z,z]z, \quad 
\b = A_{12}[z,z] w + C_{12}[w,w]w,
\]
namely 
\begin{align*}
\Big( \mB_3'(w,z) M(w,z) \tbinom{w}{z} \Big)_1 
& = \frac{i}{2} \la \Lm z, A_{12}[z,z] \Lm w \ra z 
+ \frac{i}{2} \la \Lm z, C_{12}[w,w] \Lm w \ra z 
\\ & \quad \ 
- \frac{i}{2} \la \Lm w, A_{12}[w,w] \Lm z \ra z 
- \frac{i}{2} \la \Lm w, C_{12}[z,z] \Lm z \ra z 
\\ & \quad \ 
+ \frac{i}{4} \la \Lm z, \Lm z \ra A_{12}[z,z] w
+ \frac{i}{4} \la \Lm z, \Lm z \ra C_{12}[w,w] w
\\ & \quad \ 
- \frac{i}{4} \la \Lm w, \Lm w \ra A_{12}[z,z] w
- \frac{i}{4} \la \Lm w, \Lm w \ra C_{12}[w,w] w.
\end{align*}
In Fourier series, with all indices in $\Z^d \setminus \{ 0 \}$, one has 
\begin{align*}
A_{12}[w,w] (X_3^+)_2(w,z) 
& = \frac{i}{32} \sum_{\begin{subarray}{c} j,k,\ell \\ |j| \neq |k| = |\ell| \end{subarray}}
\frac{|j|^2 |\ell|^2}{|j| - |k|} w_j w_{-j} z_\ell z_{-\ell} w_k e^{ik\cdot x},
\\
C_{12}[z,z] (X_3^+)_2(w,z) 
& = \frac{i}{32} \sum_{\begin{subarray}{c} j,k,\ell \\ |k| = |\ell| \end{subarray}}
\frac{|j|^2 |\ell|^2}{|j| + |k|} z_j z_{-j} z_\ell z_{-\ell} w_k e^{ik\cdot x},
\\
A_{12}[w, (X_3^+)_1(w,z)] z
& = \frac{-i}{32} \sum_{\begin{subarray}{c} j,k,\ell \\ |\ell| = |j| \neq |k| \end{subarray}}
\frac{|j|^2 |\ell|^2}{|j| - |k|} w_j z_{-j} w_\ell w_{-\ell} z_k e^{ik\cdot x},
\\
C_{12}[z, (X_3^+)_2(w,z) ] z
& = \frac{i}{32} \sum_{\begin{subarray}{c} j,k,\ell \\ |j| = |\ell| \end{subarray}}
\frac{|j|^2 |\ell|^2}{|j| + |k|} z_j w_{-j} z_\ell z_{-\ell} z_k e^{ik\cdot x},
\end{align*}
\begin{align*}
Q(w,z) & = \frac14 \sum_j |j| (w_j w_{-j} + 2 w_j z_{-j} + z_j z_{-j}),
\\
(\mB_3(w,z))_1 & = \frac{i}{4} \sum_{j,k} |j|^2 (z_j z_{-j} - w_j w_{-j}) z_k e^{ik \cdot x},
\end{align*}
\[
\big( Q(w,z) \mB_3(w,z) \big)_1
= \frac{i}{16} \sum_{j,k,\ell} |\ell| |j|^2  
(w_\ell w_{-\ell} + 2 w_\ell z_{-\ell} + z_\ell z_{-\ell})
(z_j z_{-j} - w_j w_{-j}) z_k e^{ik \cdot x},
\]
\begin{align*}
\la \Lm z, A_{12}[z,z] \Lm w \ra z 
& = \frac{1}{8} \sum_{\begin{subarray}{c} j,k,\ell \\ |\ell| \neq |j| \end{subarray}}
\frac{|j|^2 |\ell|^2}{|\ell| - |j|} z_j w_{-j} z_\ell z_{-\ell} z_k e^{ik\cdot x},
\\
\la \Lm z, C_{12}[w,w] \Lm w \ra z 
& = \frac{1}{8} \sum_{\begin{subarray}{c} j,k,\ell \end{subarray}}
\frac{|j|^2 |\ell|^2}{|\ell| + |j|} z_j w_{-j} w_\ell w_{-\ell} z_k e^{ik\cdot x},
\\
\la \Lm w, A_{12}[w,w] \Lm z \ra z 
& = \frac{1}{8} \sum_{\begin{subarray}{c} j,k,\ell \\ |\ell| \neq |j| \end{subarray}}
\frac{|j|^2 |\ell|^2}{|\ell| - |j|} w_j z_{-j} w_\ell w_{-\ell} z_k e^{ik\cdot x},
\\
\la \Lm w, C_{12}[z,z] \Lm z \ra z 
& = \frac{1}{8} \sum_{\begin{subarray}{c} j,k,\ell \end{subarray}}
\frac{|j|^2 |\ell|^2}{|\ell| + |j|} w_j z_{-j} z_\ell z_{-\ell} z_k e^{ik\cdot x},
\\
\la \Lm z, \Lm z \ra A_{12}[z,z] w
& = \frac{1}{8} \sum_{\begin{subarray}{c} j,k,\ell \\ |k| \neq |j| \end{subarray}}
\frac{|j|^2 |\ell|^2}{|j| - |k|} z_j z_{-j} z_\ell z_{-\ell} w_k e^{ik\cdot x},
\\
\la \Lm z, \Lm z \ra C_{12}[w,w] w
& = \frac{1}{8} \sum_{\begin{subarray}{c} j,k,\ell \end{subarray}}
\frac{|j|^2 |\ell|^2}{|j| + |k|} w_j w_{-j} z_\ell z_{-\ell} w_k e^{ik\cdot x},
\end{align*}
\begin{align*}
\la \Lm w, \Lm w \ra A_{12}[z,z] w
& = \frac{1}{8} \sum_{\begin{subarray}{c} j,k,\ell \\ |k| \neq |j| \end{subarray}}
\frac{|j|^2 |\ell|^2}{|j| - |k|} z_j z_{-j} w_\ell w_{-\ell} w_k e^{ik\cdot x},
\\
\la \Lm w, \Lm w \ra C_{12}[w,w] w
& = \frac{1}{8} \sum_{\begin{subarray}{c} j,k,\ell \end{subarray}}
\frac{|j|^2 |\ell|^2}{|j| + |k|} w_j w_{-j} w_\ell w_{-\ell} w_k e^{ik\cdot x}.
\end{align*}
Thus the first component of the quintic term $X_5^+(w,z)$ is
\begin{align*}
(X^+_5(w,z))_1
& = 
- A_{12}[w,w] (X_3^+)_2(w,z) 
- C_{12}[z,z] (X_3^+)_2(w,z) 
\notag \\ & \quad \ 
- 2 A_{12}[w, (X_3^+)_1(w,z)] z
- 2 C_{12}[z, (X_3^+)_2(w,z)] z
\notag \\ & \quad \ 
- 3 \big( Q(w,z) \mB_3(w,z) \big)_1
\notag \\ & \quad \ 
+ \frac{i}{2} \la \Lm z, A_{12}[z,z] \Lm w \ra z 
+ \frac{i}{2} \la \Lm z, C_{12}[w,w] \Lm w \ra z 
\\ & \quad \ 
- \frac{i}{2} \la \Lm w, A_{12}[w,w] \Lm z \ra z 
- \frac{i}{2} \la \Lm w, C_{12}[z,z] \Lm z \ra z 
\\ & \quad \ 
+ \frac{i}{4} \la \Lm z, \Lm z \ra A_{12}[z,z] w
+ \frac{i}{4} \la \Lm z, \Lm z \ra C_{12}[w,w] w
\\ & \quad \ 
- \frac{i}{4} \la \Lm w, \Lm w \ra A_{12}[z,z] w
- \frac{i}{4} \la \Lm w, \Lm w \ra C_{12}[w,w] w
\end{align*}
and, in Fourier series, 
\begin{align*}
& (X^+_5(w,z))_1
= 
- \frac{i}{32} \sum_{\begin{subarray}{c} j,k,\ell \\ |j| \neq |k| = |\ell| \end{subarray}}
\frac{|j|^2 |\ell|^2}{|j| - |k|} w_j w_{-j} z_\ell z_{-\ell} w_k e^{ik\cdot x}
\notag \\ & \quad \ 
- \frac{i}{32} \sum_{\begin{subarray}{c} j,k,\ell \\ |k| = |\ell| \end{subarray}}
\frac{|j|^2 |\ell|^2}{|j| + |k|} z_j z_{-j} z_\ell z_{-\ell} w_k e^{ik\cdot x}
+ \frac{i}{16} \sum_{\begin{subarray}{c} j,k,\ell \\ |\ell| = |j| \neq |k| \end{subarray}}
\frac{|j|^2 |\ell|^2}{|j| - |k|} w_j z_{-j} w_\ell w_{-\ell} z_k e^{ik\cdot x}
\notag \\ & \quad \ 
- \frac{i}{16} \sum_{\begin{subarray}{c} j,k,\ell \\ |j| = |\ell| \end{subarray}}
\frac{|j|^2 |\ell|^2}{|j| + |k|} z_j w_{-j} z_\ell z_{-\ell} z_k e^{ik\cdot x}
\notag \\ & \quad \ 
- \frac{3 i}{16} \sum_{j,k,\ell} |\ell| |j|^2  
(w_\ell w_{-\ell} + 2 w_\ell z_{-\ell} + z_\ell z_{-\ell})
(z_j z_{-j} - w_j w_{-j}) z_k e^{ik \cdot x}
\notag \\ & \quad \
+ \frac{i}{16} \sum_{\begin{subarray}{c} j,k,\ell \\ |\ell| \neq |j| \end{subarray}}
\frac{|j|^2 |\ell|^2}{|\ell| - |j|} z_j w_{-j} z_\ell z_{-\ell} z_k e^{ik\cdot x}
+ \frac{i}{16} \sum_{\begin{subarray}{c} j,k,\ell \end{subarray}}
\frac{|j|^2 |\ell|^2}{|\ell| + |j|} z_j w_{-j} w_\ell w_{-\ell} z_k e^{ik\cdot x}
\notag \\ & \quad \ 
- \frac{i}{16} \sum_{\begin{subarray}{c} j,k,\ell \\ |\ell| \neq |j| \end{subarray}}
\frac{|j|^2 |\ell|^2}{|\ell| - |j|} w_j z_{-j} w_\ell w_{-\ell} z_k e^{ik\cdot x}
- \frac{i}{16} \sum_{\begin{subarray}{c} j,k,\ell \end{subarray}}
\frac{|j|^2 |\ell|^2}{|\ell| + |j|} w_j z_{-j} z_\ell z_{-\ell} z_k e^{ik\cdot x}
\notag \\ & \quad \ 
+ \frac{i}{32} \sum_{\begin{subarray}{c} j,k,\ell \\ |k| \neq |j| \end{subarray}}
\frac{|j|^2 |\ell|^2}{|j| - |k|} z_j z_{-j} z_\ell z_{-\ell} w_k e^{ik\cdot x}
+ \frac{i}{32} \sum_{\begin{subarray}{c} j,k,\ell \end{subarray}}
\frac{|j|^2 |\ell|^2}{|j| + |k|} w_j w_{-j} z_\ell z_{-\ell} w_k e^{ik\cdot x}
\notag \\ & \quad \ 
- \frac{i}{32} \sum_{\begin{subarray}{c} j,k,\ell \\ |k| \neq |j| \end{subarray}}
\frac{|j|^2 |\ell|^2}{|j| - |k|} z_j z_{-j} w_\ell w_{-\ell} w_k e^{ik\cdot x}
- \frac{i}{32} \sum_{\begin{subarray}{c} j,k,\ell \end{subarray}}
\frac{|j|^2 |\ell|^2}{|j| + |k|} w_j w_{-j} w_\ell w_{-\ell} w_k e^{ik\cdot x}.
\end{align*}

{\it Notation}. In the coefficients of the vector field $X^+_5$ there appear several denominators, 
which imply the corresponding restrictions on the indices $j,k,\ell$ to prevent the denominators from vanishing. From now on, we will stop indicating explicitly the restrictions on the indices in summations and adopt instead the convention $0/0=0$ in the coefficients. For instance, instead of
$$
\sum_{\begin{subarray}{c} j,k,\ell \\ |k| \neq |j| \end{subarray}}
\frac{|j|^2 |\ell|^2}{|j| - |k|} z_j z_{-j} w_\ell w_{-\ell} w_k e^{ik\cdot x}
$$
we will write
$$
\sum_{j,k,\ell}
\frac{|j|^2 |\ell|^2 (1-\d_{|j|}^{|k|})}{|j| - |k|} z_j z_{-j} w_\ell w_{-\ell} w_k e^{ik\cdot x}.
$$
In this example, when $|j|=|k|$ the denominator of the coefficient vanishes; the numerator also vanishes because of the factor $(1-\d_{|j|}^{|k|})$; this has to be interpreted as $\frac{|j|^2 |\ell|^2 (1-\d_{|j|}^{|k|})}{|j| - |k|}$ being zero when $|j|=|k|$.

\bigskip

We collect similar monomials, and we get that 
$(X^+_5(w,z))_1$ is the sum of the following eight terms:
\begin{align}
\label{Y(4)11}
Y^{(4)}_{11}[w,w,w,w] w 
& := - \frac{i}{32} \sum_{j,\ell,k} \frac{|j|^2 |\ell|^2}{|j| + |k|} 
w_{j} w_{-j} w_{\ell} w_{-\ell} w_k e^{ik \cdot x},
\\
Y^{(2)}_{11}[w,w,z,z] w
& := \frac{i}{32} \sum_{j,\ell,k} |j|^2 |\ell|^2 
\Big( \frac{- \d_{|\ell|}^{|k|} (1 - \d_{|j|}^{|k|})}{|j| - |k|}
+ \frac{1}{|j|+|k|} 
\notag \\ & \qquad 
- \frac{(1 - \d_{|\ell|}^{|k|})}{|\ell| - |k|} \Big) 
w_j w_{-j} z_\ell z_{-\ell} w_k e^{ik \cdot x},
\label{Y(2)11}
\\
\label{Y(0)11}
Y^{(0)}_{11}[z,z,z,z] w 
& := \frac{i}{32} \sum_{j,\ell,k} |j|^2 |\ell|^2 
\Big( \frac{- \d_{|\ell|}^{|k|} }{|j| + |k|}
+ \frac{(1 - \d_{|j|}^{|k|})}{|j| - |k|} \Big) 
z_j z_{-j} z_\ell z_{-\ell} w_k e^{ik \cdot x},
\end{align}
\begin{align}
\label{Y(4)12}
Y^{(4)}_{12} [w,w,w,w] z 
& := \frac{3 i}{16} \sum_{j,\ell,k} |j|^2 |\ell|  
w_j w_{-j} w_\ell w_{-\ell} z_k e^{ik \cdot x},
\\
Y^{(3)}_{12} [w,w,w,z] z
& := \frac{i}{16} \sum_{j,\ell,k} |j|^2 |\ell|
\Big( \frac{|\ell| \d_{|\ell|}^{|j|} (1 - \d_{|\ell|}^{|k|}) }{|\ell| - |k|}
+ 6 
\notag \\ & \qquad 
+ \frac{|\ell|}{|\ell| + |j|}
+ \frac{|\ell| (1 - \d_{|\ell|}^{|j|}) }{|\ell| - |j|} \Big) 
w_j w_{-j} w_\ell z_{-\ell} z_k e^{ik \cdot x},
\label{Y(3)12}
\\
\label{Y(2)12}
Y^{(2)}_{12} [w,w,z,z] z
& := \frac{3 i}{16} \sum_{j,\ell,k} |j| |\ell| 
(|j| - |\ell|) 
w_j w_{-j} z_\ell z_{-\ell} z_k e^{ik \cdot x},
\\
Y^{(1)}_{12} [w,z,z,z] z 
& := \frac{i}{16} \sum_{j,\ell,k} |j| |\ell|^2
\Big( \frac{- |j| \d_{|j|}^{|\ell|} }{|j| + |k|} 
- 6 + \frac{|j| (1 - \d_{|j|}^{|\ell|}) }{|\ell| - |j|}
\notag \\ & \qquad 
- \frac{|j|}{|\ell| + |j|} \Big)  
w_j z_{-j} z_\ell z_{-\ell} z_k e^{ik \cdot x},
\label{Y(1)12}
\\
\label{Y(0)12}
Y^{(0)}_{12} [z,z,z,z] z 
& := - \frac{3i}{16} \sum_{j,\ell,k} |j|^2 |\ell|
z_j z_{-j} z_\ell z_{-\ell} z_k e^{ik \cdot x}.
\end{align}
Symmetrizing in $j \leftrightarrow \ell$ when it is possible, 
we also have 
\begin{align}
\label{Y(4)11 symm}
Y^{(4)}_{11}[w,w,w,w] w 
& := - \frac{i}{64} \sum_{j,\ell,k} 
\Big( \frac{|j|^2 |\ell|^2}{|j| + |k|} + \frac{|j|^2 |\ell|^2}{|\ell| + |k|} \Big) 
w_{j} w_{-j} w_{\ell} w_{-\ell} w_k e^{ik \cdot x},
\\
Y^{(0)}_{11}[z,z,z,z] w 
& := \frac{i}{64} \sum_{j,\ell,k} |j|^2 |\ell|^2 
\Big( - \frac{\d_{|\ell|}^{|k|} + \d_{|j|}^{|k|}}{|j| + |\ell|}
\notag \\ & \quad \ 
+ \frac{(1 - \d_{|j|}^{|k|})}{|j| - |k|} 
+ \frac{(1 - \d_{|\ell|}^{|k|})}{|\ell| - |k|} \Big) 
z_j z_{-j} z_\ell z_{-\ell} w_k e^{ik \cdot x},
\label{Y(0)11 symm}
\\
\label{Y(4)12 symm}
Y^{(4)}_{12} [w,w,w,w] z 
& := \frac{3 i}{32} \sum_{j,\ell,k} |j| |\ell| (|j| + |\ell|) 
w_j w_{-j} w_\ell w_{-\ell} z_k e^{ik \cdot x},
\\
\label{Y(0)12 symm}
Y^{(0)}_{12} [z,z,z,z] z 
& := - \frac{3i}{32} \sum_{j,\ell,k} |j| |\ell| (|j| + |\ell|)
z_j z_{-j} z_\ell z_{-\ell} z_k e^{ik \cdot x}.
\end{align}

\section{Normal form: second step}
\label{sec:second step}

We consider a transformation of the form
\begin{equation} \label{def Phi5}
\begin{pmatrix} w \\ z \end{pmatrix}
= (I + \mM(u,v)) \begin{pmatrix} u \\ v \end{pmatrix} =: \Phi^{(5)}(u,v),
\end{equation}
where $\mM(u,v)$ is a matrix operator of homogeneity degree 4.
In particular, 
\begin{equation} \label{def mM}
\mM(u,v) = \mA[u,u,u,u] + \mB[u,u,u,v] 
+ \mC[u,u,v,v] + \mD[u,v,v,v] + \mF[v,v,v,v],
\end{equation}
and 
\[
\mA[u,u,u,u] = 
\begin{pmatrix} 
\mA_{11}[u,u,u,u] & \mA_{12}[u,u,u,u] \\ 
\mA_{21}[u,u,u,u] & \mA_{22}[u,u,u,u] 
\end{pmatrix}
\]
and similarly for the other terms and for $\mM(u,v)$. 
We assume that 
\begin{align*}
\mA[u^{(1)} , u^{(2)}, u^{(3)}, u^{(4)}]
& = \mA[u^{(2)} , u^{(1)}, u^{(3)}, u^{(4)}]
= \mA[u^{(1)} , u^{(2)}, u^{(4)}, u^{(3)}],
\\ 
\mB[u^{(1)} , u^{(2)}, u^{(3)}, v]
& = \mB[u^{(2)} , u^{(1)}, u^{(3)}, v],
\\
\mC[u^{(1)} , u^{(2)}, v^{(1)}, v^{(2)}]
& = \mC[u^{(2)} , u^{(1)}, v^{(1)}, v^{(2)}]
= \mC[u^{(1)} , u^{(2)}, v^{(2)}, v^{(1)}],
\\
\mD[u, v^{(1)}, v^{(2)}, v^{(3)}]
& = \mD[u, v^{(1)}, v^{(3)}, v^{(2)}],
\\
\mF[v^{(1)} , v^{(2)}, v^{(3)}, v^{(4)}]
& = \mF[v^{(2)} , v^{(1)}, v^{(3)}, v^{(4)}]
= \mF[v^{(1)} , v^{(2)}, v^{(4)}, v^{(3)}],
\end{align*}
for all $u,v,$ $u^{(n)}, v^{(n)}$, $n=1,2,3,4$.
We also assume that 
\[
\mC_{11}[u^{(1)} , u^{(2)}, v^{(1)}, v^{(2)}] h
= \sum_{j,\ell, k} u^{(1)}_j u^{(2)}_{-j} v^{(1)}_{\ell} v^{(2)}_{-\ell} h_k \, c_{11}(j,\ell,k)
\, e^{ik \cdot x}
\]
for some coefficient $c_{11}(j,\ell,k)$ to be determined, 
and similarly for all the other terms.
One has 
\[
\pa_t \begin{pmatrix} w \\ z \end{pmatrix}
= (I + \mM(u,v)) \begin{pmatrix} \pa_t u \\ \pa_t v \end{pmatrix} 
+ \{ \pa_t \mM(u,v) \} \begin{pmatrix} u \\ v \end{pmatrix} 
= (I + \mK(u,v)) \begin{pmatrix} \pa_t u \\ \pa_t v \end{pmatrix} 
\]
where 
\begin{equation} \label{def mK}
\mK(u,v) := (\Phi^{(5)})'(u,v) - I 
= \mM(u,v) + \mE(u,v)
\end{equation}
and 
\begin{align}
& \mE(u,v) \begin{pmatrix} \a \\ \b \end{pmatrix} 
:= \{ 
2 \mA[u, \a, u,u] 
+ 2 \mA[u,u,u,\a]
+ 2 \mB[u, \a, u,v]
\notag \\ & \quad \ 
+ \mB[u,u,\a,v]
+ \mB[u,u,u,\b]
+ 2 \mC[u, \a, v, v]
+ 2 \mC[u, u, v, \b]
+ \mD[\a, v, v, v]
\notag \\ & \quad \ 
+ \mD[u, \b, v, v]
+ 2 \mD[u, v, v, \b]
+ 2 \mF[v, \b, v, v]
+ 2 \mF[v, v, v, \b]
\} \begin{pmatrix} u \\ v \end{pmatrix}. 
\label{def mE}
\end{align}
The transformed equation is
\[
\pa_t \begin{pmatrix} u \\ v \end{pmatrix} = W(u,v)
\]
where 
\begin{equation} \label{def W}
W(u,v) := (I + \mK(u,v))^{-1} X^+( \Phi^{(5)}(u,v) ).
\end{equation}
Recalling \eqref{X+ new}, we decompose 
\begin{equation} \label{W decomp}
W(u,v) = \big( 1 + \mP(\Phi^{(5)}(u,v)) \big) \big( \mD_1(u,v) + X_3^+(u,v) \big) 
+ W_5(u,v) + W_{\geq 7}(u,v),
\end{equation}
where $(1 + \mP(\Phi^{(5)})) (\mD_1 + X_3^+)$ give no contribution to the energy estimate, 
\begin{equation} \label{def W5}
W_5(u,v) := X_5^+(u,v) + \mD_1(\mM(u,v)[u,v]) - \mK(u,v) \mD_1(u,v)
\end{equation}
and $W_{\geq 7}(u,v)$ is defined by difference and contains only terms 
of homogeneity at least seven in $(u,v)$.

We calculate each term of the first component $(W_5)_1$ of $W_5$. 
First, one has 
\begin{align*}
(W_5)_1 (u,v) 
& = (X_5^+)_1(u,v) - i \Lm  \big( \mM_{11}(u,v) u + \mM_{12}(u,v) v \big) 
\\ & \quad \ 
- \Big( \mM_{11}(u,v) (-i \Lm u) + \mM_{12}(u,v) (i \Lm v) \Big) 
- \Big( \mE(u,v) \binom{-i\Lm u}{i \Lm v} \Big)_1
\\
& = (X_5^+)_1(u,v) - 2 i \mM_{12}(u,v) \Lm v
- \Big( \mE(u,v) \binom{-i\Lm u}{i \Lm v} \Big)_1.
\end{align*}
Now
\begin{align*}
& \Big( \mE(u,v) \begin{pmatrix} -i\Lm u \\ i \Lm v \end{pmatrix} \Big)_1
= - 2 i \mA_{11}[u, \Lm u, u,u] u
- 2 i \mA_{11}[u,u,u,\Lm u] u
- 2 i \mB_{11}[u, \Lm u, u,v] u
\\ & \quad \ 
- i \mB_{11}[u,u, \Lm u,v] u
+ i \mB_{11}[u,u,u, \Lm v] u
- 2 i \mC_{11}[u, \Lm u, v, v] u
+ 2 i \mC_{11}[u, u, v, \Lm v] u
\\ & \quad \ 
- i \mD_{11}[\Lm u, v, v, v] u
+ i \mD_{11}[u, \Lm v, v, v] u
+ 2 i \mD_{11}[u, v, v, \Lm v] u
+ 2 i \mF_{11}[v, \Lm v, v, v] u
\\ & \quad \ 
+ 2 i \mF_{11}[v, v, v, \Lm v] u
- 2 i \mA_{12}[u, \Lm u, u,u] v
- 2 i \mA_{12}[u,u,u, \Lm u] v
- 2 i \mB_{12}[u, \Lm u, u,v] v
\\ & \quad \ 
- i \mB_{12}[u,u, \Lm u,v] v
+ i \mB_{12}[u,u,u, \Lm v] v
- 2 i \mC_{12}[u, \Lm u, v, v] v
+ 2 i \mC_{12}[u, u, v, \Lm v] v
\\ & \quad \ 
- i \mD_{12}[\Lm u, v, v, v] v
+ i \mD_{12}[u, \Lm v, v, v] v
+ 2 i \mD_{12}[u, v, v, \Lm v] v
+ 2 i \mF_{12}[v, \Lm v, v, v] v
\\ & \quad \ 
+ 2 i \mF_{12}[v, v, v, \Lm v] v.
\end{align*}
Thus the terms in $(W_5)_1(u,v)$ containing the monomials 
$u_j u_{-j} u_\ell u_{-\ell} u_k e^{ik \cdot x}$ are
\begin{align*}
& Y^{(4)}_{11} [u,u,u,u] u 
+ 2 i \mA_{11}[u, \Lm u, u,u] u 
+ 2 i \mA_{11}[u, u, u, \Lm u] u 
\\
& = \sum_{j,\ell,k} u_j u_{-j} u_\ell u_{-\ell} u_k e^{ik \cdot x} 
\Big( 2 i (|j| + |\ell|) a_{11}(j,\ell,k) 
- \frac{i}{64} \Big( \frac{|j|^2 |\ell|^2}{|j| + |k|} 
+ \frac{|j|^2 |\ell|^2}{|\ell| + |k|} \Big) \Big).
\end{align*}
Hence we choose
\begin{equation} \label{def a11 ottobrata}
a_{11}(j,\ell,k) := \frac{ |j|^2 |\ell|^2}{128 (|j| + |\ell|)}  
\Big( \frac{1}{|j| + |k|} + \frac{1}{|\ell| + |k|} \Big),
\end{equation}
so that $(W_5)_1(u,v)$ does not contain monomials of the type 
$u_j u_{-j} u_\ell u_{-\ell} u_k e^{ik \cdot x}$. 

Next, since $(X_5^+)_1(u,v)$ does not contain monomials 
$u_j u_{-j} u_\ell v_{-\ell} u_k e^{ik \cdot x}$, 
we fix 
\begin{equation} \label{def b11 ottobrata}
\mB_{11} = 0,
\end{equation}
so that $(W_5)_1(u,v)$ also does not contain such monomials.

Next, the terms in $(W_5)_1(u,v)$ containing the monomials 
$u_j u_{-j} v_\ell v_{-\ell} u_k e^{ik \cdot x}$ are
\begin{align*}
& Y^{(2)}_{11} [u,u,v,v] u
+ 2i \mC_{11} [u,\Lm u, v, v] u
- 2i \mC_{11} [u, u, v, \Lm v] u
\\
& = \sum_{j,\ell,k} u_j u_{-j} v_\ell v_{-\ell} u_k e^{ik \cdot x} 
\Big\{ \frac{i}{32} |j|^2 |\ell|^2 
\Big( \frac{- \d_{|\ell|}^{|k|} (1 - \d_{|j|}^{|k|})}{|j| - |k|}
+ \frac{1}{|j|+|k|} 
- \frac{(1 - \d_{|\ell|}^{|k|})}{|\ell| - |k|} \Big) 
\notag \\ & \qquad 
+ 2 i c_{11}(j,\ell,k) (|j| - |\ell|) \Big\}.
\end{align*}
This term can be eliminated for $|j| \neq |\ell|$, 
while for $|j| = |\ell|$ it cannot be eliminated, 
and in that case we fix $c_{11} = 0$. 
Thus we choose 
\begin{equation} \label{def c11 ottobrata}
c_{11}(j,\ell,k) := \frac{1}{64} |j|^2 |\ell|^2 
\Big( \frac{- \d_{|\ell|}^{|k|} (1 - \d_{|j|}^{|k|})}{|j| - |k|}
+ \frac{1}{|j|+|k|} 
- \frac{(1 - \d_{|\ell|}^{|k|})}{|\ell| - |k|} \Big)
\frac{1 - \d_{|j|}^{|\ell|} }{|\ell| - |j|},
\end{equation}
and the terms in $(W_5)_1(u,v)$ containing the monomials 
$u_j u_{-j} v_\ell v_{-\ell} u_k e^{ik \cdot x}$ become 
\begin{align*}
& \sum_{\begin{subarray}{c} j,\ell,k \\ |j| = |\ell| \end{subarray}} 
u_j u_{-j} v_\ell v_{-\ell} u_k e^{ik \cdot x} 
\Big\{ \frac{i}{32} |j|^2 |\ell|^2 
\Big( \frac{- \d_{|\ell|}^{|k|} (1 - \d_{|j|}^{|k|})}{|j| - |k|}
+ \frac{1}{|j|+|k|} 
- \frac{(1 - \d_{|\ell|}^{|k|})}{|\ell| - |k|} \Big) \Big\}
\\
& = \frac{i}{32} \sum_{\begin{subarray}{c} j,\ell,k \\ |j| = |\ell| \end{subarray}} 
u_j u_{-j} v_\ell v_{-\ell} u_k e^{ik \cdot x} 
|j|^2 |\ell|^2 
\Big( \frac{1}{|j|+|k|} - \frac{(1 - \d_{|\ell|}^{|k|})}{|\ell| - |k|} \Big).
\end{align*}

Next, since $(X_5^+)_1(u,v)$ does not contain monomials 
$u_j v_{-j} v_\ell v_{-\ell} u_k e^{ik \cdot x}$, 
we fix 
\begin{equation} \label{def d11 ottobrata}
\mD_{11} = 0,
\end{equation}
so that $(W_5)_1(u,v)$ also does not contain such monomials.

Next, the terms in $(W_5)_1(u,v)$ containing the monomials 
$v_j v_{-j} v_\ell v_{-\ell} u_k e^{ik \cdot x}$ are
\begin{align*}
& Y^{(0)}_{11} [v,v,v,v] u
- 2i \mF_{11} [v,\Lm v, v, v] u
- 2i \mF_{11} [v, v, v, \Lm v] u
\\
& = \sum_{j,\ell,k} v_j v_{-j} v_\ell v_{-\ell} u_k e^{ik \cdot x} 
\Big\{ \frac{i}{64} |j|^2 |\ell|^2 
\Big( - \frac{\d_{|\ell|}^{|k|} + \d_{|j|}^{|k|}}{|j| + |\ell|}
+ \frac{(1 - \d_{|j|}^{|k|})}{|j| - |k|} 
+ \frac{(1 - \d_{|\ell|}^{|k|})}{|\ell| - |k|} \Big) 
\notag \\ & \quad \ 
- 2 i f_{11}(j,\ell,k) (|j| + |\ell|) \Big\}.
\end{align*}
Hence we fix 
\begin{equation} \label{def f11 ottobrata}
f_{11}(j,\ell,k) 
:= \frac{1}{128} 
\Big( - \frac{\d_{|\ell|}^{|k|} + \d_{|j|}^{|k|}}{|j| + |\ell|}
+ \frac{(1 - \d_{|j|}^{|k|})}{|j| - |k|} 
+ \frac{(1 - \d_{|\ell|}^{|k|})}{|\ell| - |k|} \Big) \frac{|j|^2 |\ell|^2 }{|j|+|\ell|},
\end{equation}
so that $(W_5)_1(u,v)$ does not contain monomials of the type 
$v_j v_{-j} v_\ell v_{-\ell} u_k e^{ik \cdot x}$. 

Next, the terms in $(W_5)_1(u,v)$ containing the monomials 
$u_j u_{-j} u_\ell u_{-\ell} v_k e^{ik \cdot x}$ are
\begin{align*}
& Y^{(4)}_{12} [u,u,u,u] v
- 2i \mA_{12} [u,u,u,u] \Lm v
+ 2i \mA_{12} [u,\Lm u, u,u] v
+ 2i \mA_{12} [u,u,u,\Lm u] v
\\
& = \sum_{j,\ell,k} u_j u_{-j} u_\ell u_{-\ell} v_k e^{ik \cdot x} 
\Big\{ \frac{3i}{32} |j| |\ell| (|j| + |\ell|)
- 2 i a_{12}(j,\ell,k) (|k| - |j| - |\ell|) \Big\}.
\end{align*}
Hence we fix 
\begin{equation} \label{def a12 ottobrata}
a_{12}(j,\ell,k) := \frac{3}{64} |j| |\ell| (|j| + |\ell|)
\frac{(1 - \d_{|k|}^{|j|+|\ell|} )}{|k| - |j| - |\ell|} ,
\end{equation}
and the terms in $(W_5)_1(u,v)$ containing the monomials 
$u_j u_{-j} u_\ell u_{-\ell} v_k e^{ik \cdot x}$ become 
\begin{align*}
& \frac{3 i}{32} 
\sum_{\begin{subarray}{c} j,\ell,k \\ |k| = |j| + |\ell| \end{subarray}} 
u_j u_{-j} u_\ell u_{-\ell} v_k e^{ik \cdot x} 
|j| |\ell| |k|. 
\end{align*}

Next, the terms in $(W_5)_1(u,v)$ containing the monomials 
$u_j u_{-j} u_\ell v_{-\ell} v_k e^{ik \cdot x}$ are
\begin{align*}
& Y^{(3)}_{12} [u,u,u,v] v
- 2i \mB_{12} [u,u,u,v] \Lm v
+ 2i \mB_{12} [u,\Lm u, u,v] v
\\ 
& \qquad + i \mB_{12} [u,u,\Lm u,v] v
- i \mB_{12} [u,u,u,\Lm v] v
\\
& = \sum_{j,\ell,k} u_j u_{-j} u_\ell v_{-\ell} v_k e^{ik \cdot x} 
\Big\{ \frac{i}{16} |j|^2 |\ell|
\Big( \frac{|\ell| \d_{|\ell|}^{|j|} (1 - \d_{|\ell|}^{|k|}) }{|\ell| - |k|}
+ 6 
+ \frac{|\ell|}{|\ell| + |j|}
\notag \\ & \qquad 
+ \frac{|\ell| (1 - \d_{|\ell|}^{|j|}) }{|\ell| - |j|} \Big) 
- 2 i b_{12}(j,\ell,k) (|k| - |j|) \Big\}.
\end{align*}
Hence we fix 
\begin{equation} \label{def b12 ottobrata}
b_{12}(j,\ell,k) := \frac{|j|^2 |\ell|}{32} 
\Big( \frac{|\ell| \d_{|\ell|}^{|j|} (1 - \d_{|\ell|}^{|k|}) }{|\ell| - |k|}
+ 6
+ \frac{|\ell|}{|\ell| + |j|}
+ \frac{|\ell| (1 - \d_{|\ell|}^{|j|}) }{|\ell| - |j|} \Big) 
\frac{1 - \d_{|j|}^{|k|} }{|k| - |j|},
\end{equation}
and the terms in $(W_5)_1(u,v)$ containing the monomials 
$u_j u_{-j} u_\ell v_{-\ell} v_k e^{ik \cdot x}$ become 
\begin{align*}
& \sum_{\begin{subarray}{c} j,\ell,k \\ |j| = |k| \end{subarray}} 
u_j u_{-j} u_\ell v_{-\ell} v_k e^{ik \cdot x} 
\frac{i}{16} |j|^2 |\ell|
\Big( \frac{|\ell| \d_{|\ell|}^{|j|} (1 - \d_{|\ell|}^{|k|}) }{|\ell| - |k|}
+ 6
+ \frac{|\ell|}{|\ell| + |j|}
+ \frac{|\ell| (1 - \d_{|\ell|}^{|j|}) }{|\ell| - |j|} \Big) 
\\ 
& = \frac{i}{16}  \sum_{\begin{subarray}{c} j,\ell,k \\ |j| = |k| \end{subarray}} 
u_j u_{-j} u_\ell v_{-\ell} v_k e^{ik \cdot x} 
|j|^2 |\ell|
\Big( 6 + \frac{|\ell|}{|\ell| + |j|}
+ \frac{|\ell| (1 - \d_{|\ell|}^{|j|}) }{|\ell| - |j|} \Big).
\end{align*}

Next, the terms in $(W_5)_1(u,v)$ containing the monomials 
$u_j u_{-j} v_\ell v_{-\ell} v_k e^{ik \cdot x}$ are
\begin{align*}
& Y^{(2)}_{12} [u,u,v,v] v
- 2i \mC_{12} [u,u,v,v] \Lm v
+ 2i \mC_{12} [u,\Lm u, v,v] v
- 2i \mC_{12} [u,u,v,\Lm v] v
\\
& = \sum_{j,\ell,k} u_j u_{-j} v_\ell v_{-\ell} v_k e^{ik \cdot x} 
\Big\{ 
\frac{3i}{16} |j| |\ell| (|j| - |\ell|)  
- 2 i c_{12}(j,\ell,k) (|k| - |j| + |\ell|) \Big\}.
\end{align*}
Hence we fix
\begin{equation} \label{def c12 ottobrata}
c_{12}(j,\ell,k) := \frac{3}{32} |j| |\ell| (|j| - |\ell|) 
\frac{1 - \d_{|k|}^{|j| - |\ell|} }{|k| - |j| + |\ell|},
\end{equation}
and the terms in $(W_5)_1(u,v)$ containing the monomials 
$u_j u_{-j} v_\ell v_{-\ell} v_k e^{ik \cdot x}$ become
\begin{align*}
\frac{3 i}{16} \sum_{\begin{subarray}{c} j,\ell,k \\ |k| = |j| - |\ell| \end{subarray}} 
u_j u_{-j} v_\ell v_{-\ell} v_k e^{ik \cdot x} |j| |\ell| |k|.
\end{align*}

Next, the terms in $(W_5)_1(u,v)$ containing the monomials 
$u_j v_{-j} v_\ell v_{-\ell} v_k e^{ik \cdot x}$ are
\begin{align*}
& Y^{(1)}_{12} [u,v,v,v] v
- 2i \mD_{12} [u,v,v,v] \Lm v
+ i \mD_{12} [\Lm u, v,v,v] v
\\ & \qquad 
- i \mD_{12} [u, \Lm v, v,v] v
- 2i \mD_{12} [u,v,v,\Lm v] v
\\
& = \sum_{j,\ell,k} u_j v_{-j} v_\ell v_{-\ell} v_k e^{ik \cdot x} 
\Big\{ \frac{i}{16} \sum_{j,\ell,k} |j| |\ell|^2
\Big( \frac{- |j| \d_{|j|}^{|\ell|} }{|j| + |k|} 
- 6 + \frac{|j| (1 - \d_{|j|}^{|\ell|}) }{|\ell| - |j|}
\notag \\ & \qquad 
- \frac{|j|}{|\ell| + |j|} \Big) 
- 2 i d_{12}(j,\ell,k) (|k| + |\ell|) \Big\}.
\end{align*}
Hence we fix
\begin{equation} \label{def d12 ottobrata}
d_{12}(j,\ell,k) := \frac{|j| |\ell|^2}{32 (|k| + |\ell|)}
\Big( \frac{- |j| \d_{|j|}^{|\ell|} }{|j| + |k|} 
- 6 + \frac{|j| (1 - \d_{|j|}^{|\ell|}) }{|\ell| - |j|}
- \frac{|j|}{|\ell| + |j|} \Big),
\end{equation}
so that $(W_5)_1(u,v)$ does not contain monomials of the type 
$u_j v_{-j} v_\ell v_{-\ell} v_k e^{ik \cdot x}$. 

Next, the terms in $(W_5)_1(u,v)$ containing the monomials 
$v_j v_{-j} v_\ell v_{-\ell} v_k e^{ik \cdot x}$ are
\begin{align*}
& Y^{(0)}_{12} [v,v,v,v] v
- 2i \mF_{12} [v,v,v,v] \Lm v
- 2i \mF_{12} [v, \Lm v,v,v] v
- 2i \mF_{12} [v,v,v, \Lm v] v
\\
& = \sum_{j,\ell,k} v_j v_{-j} v_\ell v_{-\ell} v_k e^{ik \cdot x} 
\Big\{ - \frac{3i}{32} \sum_{j,\ell,k} |j| |\ell| (|j| + |\ell|) 
- 2i f_{12}(j,\ell,k) (|k| + |j| + |\ell|) \Big\}.
\end{align*}
Hence we fix 
\begin{equation} \label{def f12 ottobrata}
f_{12}(j,\ell,k) := - \frac{3 |j| |\ell| (|j| + |\ell|) }{64 (|k| + |j| + |\ell|)},
\end{equation}
so that $(W_5)_1(u,v)$ does not contain monomials of the type 
$v_j v_{-j} v_\ell v_{-\ell} v_k e^{ik \cdot x}$. 

Summarizing, it remains 
\begin{align}
(W_5)_1(u,v) 
& = \frac{i}{32} \sum_{\begin{subarray}{c} j,\ell,k \\ |j| = |\ell| \end{subarray}} 
u_j u_{-j} v_\ell v_{-\ell} u_k e^{ik \cdot x} 
|j|^2 |\ell|^2 
\Big( \frac{1}{|j|+|k|} - \frac{(1 - \d_{|\ell|}^{|k|})}{|\ell| - |k|} \Big)
\notag \\ & \quad 
+ \frac{3 i}{32} 
\sum_{\begin{subarray}{c} j,\ell,k \\ |k| = |j| + |\ell| \end{subarray}} 
u_j u_{-j} u_\ell u_{-\ell} v_k e^{ik \cdot x} 
|j| |\ell| |k|
\notag \\ & \quad 
+ \frac{i}{16}  \sum_{\begin{subarray}{c} j,\ell,k \\ |j| = |k| \end{subarray}} 
u_j u_{-j} u_\ell v_{-\ell} v_k e^{ik \cdot x} 
|j|^2 |\ell|
\Big( 6 + \frac{|\ell|}{|\ell| + |j|}
+ \frac{|\ell| (1 - \d_{|\ell|}^{|j|}) }{|\ell| - |j|} \Big)
\notag \\ & \quad 
+ \frac{3 i}{16} \sum_{\begin{subarray}{c} j,\ell,k \\ |k| = |j| - |\ell| \end{subarray}} 
u_j u_{-j} v_\ell v_{-\ell} v_k e^{ik \cdot x} |j| |\ell| |k|.
\label{W5 comp 1 after NF}
\end{align}

With similar calculations, 
or deducing the formula from the real structure, 
the second component $(W_5)_2$ of $W_5$ is 
\begin{align}
(W_5)_2(u,v) 
& = - \frac{i}{32} \sum_{\begin{subarray}{c} j,\ell,k \\ |j| = |\ell| \end{subarray}} 
v_j v_{-j} u_\ell u_{-\ell} v_k e^{ik \cdot x} 
|j|^2 |\ell|^2 
\Big( \frac{1}{|j|+|k|} - \frac{(1 - \d_{|\ell|}^{|k|})}{|\ell| - |k|} \Big)
\notag \\ & \quad 
- \frac{3 i}{32} 
\sum_{\begin{subarray}{c} j,\ell,k \\ |k| = |j| + |\ell| \end{subarray}} 
v_j v_{-j} v_\ell v_{-\ell} u_k e^{ik \cdot x} 
|j| |\ell| |k|
\notag \\ & \quad 
- \frac{i}{16}  \sum_{\begin{subarray}{c} j,\ell,k \\ |j| = |k| \end{subarray}} 
v_j v_{-j} v_\ell u_{-\ell} u_k e^{ik \cdot x} 
|j|^2 |\ell|
\Big( 6 + \frac{|\ell|}{|\ell| + |j|}
+ \frac{|\ell| (1 - \d_{|\ell|}^{|j|}) }{|\ell| - |j|} \Big)
\notag \\ & \quad 
- \frac{3 i}{16} \sum_{\begin{subarray}{c} j,\ell,k \\ |k| = |j| - |\ell| \end{subarray}} 
v_j v_{-j} u_\ell u_{-\ell} u_k e^{ik \cdot x} |j| |\ell| |k|.
\label{W5 comp 2 after NF}
\end{align}

\begin{lemma} \label{lemma:stima W5} 
For all $s \geq 0$, all $(w,z) \in H^s_0(\T^d, c.c.) \cap 
H^{m_0}_0(\T^d, c.c.)$, one has 
\begin{equation*} 
\| W_{5}(u,v) \|_s 
\leq C \| u \|_{m_0}^4 \| u \|_s,
\end{equation*}
where $C$ is a universal constant.
\end{lemma}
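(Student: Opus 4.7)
\emph{Plan.} The proof is a direct Fourier-space estimate of the four explicit resonant sums in $(W_5)_1(u,v)$ given by \eqref{W5 comp 1 after NF}; the analogous estimate for $(W_5)_2$ follows because $\Phi^{(5)}$ preserves the real vector-field structure \eqref{real vector field}, so on $(u,v)=(u,\bar u)$ we have $\|(W_5)_2\|_s=\|(W_5)_1\|_s$. Each of the four terms has the form $\sum_k m_k(u,v)\, y_k\, e^{ik\cdot x}$ where $y_k\in\{u_k,v_k\}$ and $m_k(u,v)$ is a scalar multiplier depending quartically on the Fourier coefficients of $u,v$, with the sum restricted to a specific resonance. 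If $|m_k|\leq C\|u\|_{m_0}^2\|v\|_{m_0}^2$ uniformly in $k$, then summing $|k|^{2s}|m_k y_k|^2$ and using $v=\bar u$ gives the desired bound $C\|u\|_{m_0}^4\|u\|_s$. So the task reduces to a uniform-in-$k$ estimate for each multiplier $m_k$.

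For the second, third and fourth sums in \eqref{W5 comp 1 after NF}, the resonance constraints $|k|=|j|+|\ell|$, $|j|=|k|$, $|k|=||j|-|\ell||$ directly bound $|k|$ by $|j|+|\ell|$, so the polynomial $|k|$-factors appearing in the coefficients are absorbed into $|j|+|\ell|$. After this substitution, the inner sum in $j,\ell$ factors (via H\"older) into a product of sums of the type $\sum_N N^{2\alpha} a_N$ with $a_N:=\sum_{|j|=N}|u_j|^2$ and $\alpha\leq m_0$; these are controlled by $\|u\|_{m_0}^2$ and $\|v\|_{m_0}^2$, giving the uniform bound on $m_k$ and hence the claimed estimate.

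The delicate case is the first sum, with constraint $|j|=|\ell|$ and no constraint relating $|k|$ to $|j|,|\ell|$. Here I exploit the algebraic identity
\[
\frac{1}{|j|+|k|}-\frac{1-\delta^{|k|}_{|j|}}{|j|-|k|} = \frac{-2|k|}{|j|^2-|k|^2}\qquad (|j|\neq|k|),
\]
together with the Diophantine bound $||j|^2-|k|^2|\geq 1$ whenever $|j|\neq|k|$ (valid since $|j|^2,|k|^2\in\mathbb{Z}_{\geq 0}$). Setting $N=|j|=|\ell|$, I split the $N$-sum into the regimes $N\leq|k|/2$, $|k|/2<N<2|k|$, and $N\geq 2|k|$. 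In the first and third regimes, the denominator $|N^2-|k|^2|$ is comparable to $\max(N,|k|)^2$, producing a geometric or polynomial decay in $|k|$ or $N$ which, combined with the Sobolev bound $a_N\leq\|u\|_{m_0}^2/N^{2m_0}$, yields a contribution bounded uniformly in $k$ by $C\|u\|_{m_0}^2\|v\|_{m_0}^2$. The near-resonance regime $N\sim|k|$ is the main obstacle: here the coefficient may be as large as $N^4|k|$, but the Sobolev bound gives $N^4 a_N b_N\leq\|u\|_{m_0}^2\|v\|_{m_0}^2 N^{4-4m_0}$, which for $d\geq 2$ with $m_0=3/2$ produces $N^{-2}\sim|k|^{-2}$; combined with the logarithmically summable $\sum_{N\sim|k|,N\neq|k|}1/|N^2-|k|^2|$ and the outer factor $|k|$ from the identity, this near-resonance contribution is again $O(1)$ in $k$. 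For $d=1$, where $m_0=1$ is smaller, one uses instead the stronger integer bound $||j|-|k||\geq 1$ to get $|k|/||j|^2-|k|^2|\leq|k|/(N+|k|)\leq 1$, so the near-resonance contribution is handled by counting $O(|k|)$ integers $N$ and the Sobolev decay $a_N b_N\leq \|u\|_1^2\|v\|_1^2/N^4$. Once the uniform multiplier bound $|m_k|\leq C\|u\|_{m_0}^4$ is established in both dimensional regimes, the lemma follows as described in the first paragraph.
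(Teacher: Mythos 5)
Your strategy is essentially the same as the paper's: the paper's own proof is a one-line pointer to the explicit Fourier formulas \eqref{W5 comp 1 after NF}--\eqref{W5 comp 2 after NF} together with the small-divisor bound \eqref{denom due interi}, while you spell out the same multiplier estimate in more detail. Your identity $\frac{1}{|j|+|k|}-\frac{1-\delta^{|k|}_{|j|}}{|j|-|k|}=\frac{-2|k|}{|j|^2-|k|^2}$ combined with $\big||j|^2-|k|^2\big|\geq 1$ is exactly the content of \eqref{denom due interi} written slightly differently, and the observation that it suffices to obtain a uniform-in-$k$ bound on each scalar multiplier $m_k$ of the form $|m_k|\leq C\|u\|_{m_0}^4$ is correct and is what the paper intends.

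There is one local flaw worth fixing. In your treatment of the first sum for $d=1$ you invoke ``counting $O(|k|)$ integers $N$'' together with the pointwise decay $a_N b_N\leq \|u\|_1^2\|v\|_1^2/N^4$. Read literally, this gives $N^4 a_N b_N\leq\|u\|_1^4$ per value of $N$, so summing over $O(|k|)$ values yields $O(|k|)\|u\|_1^4$, which is not uniform in $k$. (Note also that for $d=1$ the set $\Gamma$ consists of integers, so with $||j|-|k||\geq 1$ there is no genuine near-resonance regime; the notion of ``counting $O(|k|)$ near-resonant $N$'' does not really apply.) The estimate is nevertheless true: with the gain $\frac{|k|}{||j|-|k||(|j|+|k|)}\leq 1$ the coefficient is $\leq 2N^4$, and one should conclude by H\"older on the shell sums rather than by the pointwise bound, namely $\sum_{N} N^4 a_N b_N = \sum_N (N^2 a_N)(N^2 b_N)\leq \big(\sup_N N^2 b_N\big)\sum_N N^2 a_N\leq\|v\|_1^2\,\|u\|_1^2$. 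This is in fact the same H\"older device you already use (correctly) for the second, third and fourth sums, so the fix is immediate; everything else in the proposal is sound.
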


\begin{proof} 
The estimate is deduced from \eqref{W5 comp 1 after NF}-\eqref{W5 comp 2 after NF},
using the following bound: 
if $\a, \b \in \Z^d \setminus \{ 0 \}$, $0 < ||\a| - |\b|| < 1$, 
then $|\a|^2 - |\b|^2$ is a nonzero integer, 
$|\a| \leq 2 |\b|$, $|\b| \leq 2 |\a|$, and 
\begin{equation} \label{denom due interi}
\frac{1}{||\a| - |\b||} = \frac{|\a| + |\b|}{| |\a|^2 - |\b|^2 |} 
\leq |\a| + |\b| 
\leq C |\a| 
\leq C' |\b|. 
\end{equation}
\end{proof}

By \eqref{W5 comp 1 after NF}-\eqref{W5 comp 2 after NF}, 
the system for the Fourier coefficients becomes
\begin{align}
\pa_t u_k 
& = - i (1 + \mP) \Big( |k| u_k 
+ \frac{1}{4} \sum_{|j| = |k|} u_j u_{-j} |j|^2 v_k \Big)
\notag \\ & \quad 
+ \frac{i}{32} \sum_{\begin{subarray}{c} j,\ell \\ |j| = |\ell| \end{subarray}} 
u_j u_{-j} v_\ell v_{-\ell} u_k |j|^2 |\ell|^2 
\Big( \frac{1}{|j|+|k|} - \frac{(1 - \d_{|\ell|}^{|k|})}{|\ell| - |k|} \Big)
\notag \\ & \quad 
+ \frac{3 i}{32} 
\sum_{\begin{subarray}{c} j,\ell \\ |j| + |\ell| = |k| \end{subarray}} 
u_j u_{-j} u_\ell u_{-\ell} v_k |j| |\ell| |k|
\notag \\ & \quad 
+ \frac{i}{16} \sum_{\begin{subarray}{c} j,\ell \\ |j| = |k| \end{subarray}} 
u_j u_{-j} u_\ell v_{-\ell} v_k |j|^2 |\ell|
\Big( 6 + \frac{|\ell|}{|\ell| + |j|}
+ \frac{|\ell| (1 - \d_{|\ell|}^{|j|}) }{|\ell| - |j|} \Big)
\notag \\ & \quad 
+ \frac{3 i}{16} \sum_{\begin{subarray}{c} j,\ell \\ |j| - |\ell| = |k| \end{subarray}} 
u_j u_{-j} v_\ell v_{-\ell} v_k |j| |\ell| |k| 
+ [(W_{\geq 7})_1(u,v)]_k
\label{eq for uk}
\end{align}
and
\begin{align}
\pa_t v_k 
& = i (1 + \mP) \Big( |k| v_k 
+ \frac{1}{4} \sum_{|j| = |k|} v_j v_{-j} |j|^2 u_k \Big)
\notag \\ & \quad 
- \frac{i}{32} \sum_{\begin{subarray}{c} j,\ell \\ |j| = |\ell| \end{subarray}} 
v_j v_{-j} u_\ell u_{-\ell} v_k |j|^2 |\ell|^2 
\Big( \frac{1}{|j|+|k|} - \frac{(1 - \d_{|\ell|}^{|k|})}{|\ell| - |k|} \Big)
\notag \\ & \quad 
- \frac{3 i}{32} 
\sum_{\begin{subarray}{c} j,\ell \\ |j| + |\ell| = |k| \end{subarray}} 
v_j v_{-j} v_\ell v_{-\ell} u_k |j| |\ell| |k|
\notag \\ & \quad 
- \frac{i}{16} \sum_{\begin{subarray}{c} j,\ell \\ |j| = |k| \end{subarray}} 
v_j v_{-j} v_\ell u_{-\ell} u_k |j|^2 |\ell|
\Big( 6 + \frac{|\ell|}{|\ell| + |j|}
+ \frac{|\ell| (1 - \d_{|\ell|}^{|j|}) }{|\ell| - |j|} \Big)
\notag \\ & \quad 
- \frac{3 i}{16} \sum_{\begin{subarray}{c} j,\ell \\ |j| - |\ell| = |k| \end{subarray}} 
v_j v_{-j} u_\ell u_{-\ell} u_k |j| |\ell| |k| 
+ [(W_{\geq 7})_2(u,v)]_k
\label{eq for vk}
\end{align}
where $[(W_{\geq 7})_1(u,v)]_k$ denotes the $k$-th Fourier coefficient 
of the first component of $W_{\geq 7}(u,v)$, and similarly for the second component.

Now we prove that the transformation $\Phi^{(5)}$ is bounded and invertible in a ball.
Let us begin with estimating the denominators $|k| \pm |j| \pm |\ell|$. 

\begin{lemma} \label{lemma:ottobrata.1}
Let $d \geq 2$, and let $k, j, \ell \in \Z^d \setminus \{ 0 \}$. 
If $|k| - |j| + |\ell|$ is nonzero, then 
\begin{equation} \label{ott.1}
\Big| \frac{1}{|k| - |j| + |\ell|} \Big| \leq C |j|^2 |\ell|.
\end{equation}
If $|k| - |j| - |\ell|$ is nonzero, then 
\begin{equation} \label{ott.4}
\Big| \frac{1}{|k| - |j| - |\ell|} \Big| \leq C |j| |\ell| (|j| + |\ell|).
\end{equation}
The constant $C$ is universal 
($C = 27$ is enough). 
\end{lemma}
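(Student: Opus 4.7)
The plan is to exploit the key number-theoretic fact that, since $k,j,\ell \in \Z^d \setminus \{0\}$ implies $|k|^2, |j|^2, |\ell|^2 \in \N$, there is an algebraic identity turning products of $|k| \pm |j| \pm |\ell|$ into integers. Writing $a = |k|$, $b = |j|$, $c = |\ell|$, one has
\begin{equation*}
(a-b+c)(a+b-c)(a+b+c)(a-b-c) = (a^2 - b^2 - c^2)^2 - 4 b^2 c^2 \in \Z,
\end{equation*}
which vanishes exactly when $a \in \{ b+c,\ |b-c| \}$. Whenever this integer is nonzero its absolute value is at least $1$, and so lower bounds on $|a-b+c|$ (resp.\ $|a-b-c|$) can be extracted by upper-bounding the three complementary factors.

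To prove \eqref{ott.1} I would split into cases. If $|a-b+c| \geq 1$ the inequality is trivial since $b,c \geq 1$. Otherwise $|a-b+c| < 1$ forces $a$ to lie within distance $1$ of $|b-c|$, and in particular $a \leq 2b$. The three companion factors then satisfy
\begin{equation*}
|a+b+c| \leq 3b, \qquad |a+b-c| \leq 2|b-c| + 1 \leq 3b, \qquad |a-b-c| \leq 2c + 1 \leq 3c,
\end{equation*}
so the integer identity yields $|a-b+c| \geq 1/(27 b^2 c)$, which is \eqref{ott.1} with $C=27$. The proof of \eqref{ott.4} is analogous: smallness of $|a-b-c|$ forces $a \leq 2(b+c)$, and one bounds $|a+b+c| \leq 3(b+c)$, $|a+b-c| \leq 3b$, $|a-b+c| \leq 3c$, producing $|a-b-c| \geq 1/(27bc(b+c))$.

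The main point requiring care is the treatment of the degenerate cases, i.e.\ when the integer on the right-hand side of the identity actually vanishes, because then the generic ``$|\text{nonzero integer}| \geq 1$'' step fails. These cases must be disposed of directly, again using the integrality of $b^2$ and $c^2$. For instance, when $a = c - b$ with $c > b$ one has $|a-b+c| = 2(c-b) = 2(c^2 - b^2)/(c+b) \geq 2/(b+c)$, since $c^2 - b^2$ is a positive integer; the case $a = b+c$ gives $|a-b+c| = 2c \geq 2$ trivially; the case $a = b - c$ with $b > c$ gives $|a-b+c| = 0$, excluded by hypothesis. I expect this edge-case bookkeeping, rather than the main algebraic computation, to be the only point actually requiring attention.
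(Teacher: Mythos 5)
Your proposal is correct and follows essentially the same line as the paper: after reducing to the case where the relevant factor is small, you invoke the integrality of the four-factor product $(a+b+c)(a+b-c)(a-b+c)(a-b-c)$, bound the complementary factors using the smallness hypothesis, and then treat the degenerate subcases where the product vanishes. The only (inessential) difference is stylistic: in the degenerate subcase $a = c - b$ you produce the lower bound $2(c^2-b^2)/(b+c) \geq 2/(b+c)$, whereas the paper simply notes that $|a - b + c| = 2a \geq 2$ there, contradicting the smallness assumption outright; your computation is an overcomplication of the same observation and still yields the required inequality.
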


\begin{proof} 
Let $|k| - |j| + |\ell| \neq 0$. 
If $||k| - |j| + |\ell|| \geq 1$, then \eqref{ott.1} trivially holds. 
Thus, assume that 
\begin{equation} \label{ott.3}
0 < ||k| - |j| + |\ell|| < 1.
\end{equation} 
Since $|j| \geq 1$, it follows that 
\begin{equation} \label{ott.8}
|k| + |\ell| < |j| + 1 
\leq 2|j|.
\end{equation}
The product 
\begin{align}
p & := 
(|k| + |j| + |\ell|) 
(|k| + |j| - |\ell|) 
(|k| - |j| + |\ell|) 
(|k| - |j| - |\ell|) 
\notag \\ & 
= (|k|^2 + |j|^2 - |\ell|^2)^2 - 4 |k|^2 |j|^2 
\label{ott.6}
\end{align}
is an integer. 
If $p \neq 0$, then $|p| \geq 1$, and, using \eqref{ott.8}, 
\begin{align*}
\Big| \frac{1}{|k| - |j| + |\ell|} \Big| 
& \leq |(|k| + |j| + |\ell|) (|k| + |j| - |\ell|) (|k| - |j| - |\ell|)|
\\ 
& \leq (3 |j|) (3|j|) (3|\ell|) = C |j|^2 |\ell|.
\end{align*}
If $p=0$, then $|k| + |j| - |\ell| = 0$ or $|k| - |j| - |\ell| = 0$. 
If $|k| + |j| - |\ell| = 0$, then $|k| - |j| + |\ell| = 2 |k| \geq 2$, 
which contradicts \eqref{ott.3}. 
If $|k| - |j| - |\ell| = 0$, then $|k| - |j| + |\ell| = 2 |\ell| \geq 2$, 
which also contradicts \eqref{ott.3}. 
This completes the proof of \eqref{ott.1}. 

Now we prove \eqref{ott.4}. 
Let $|k| - |j| - |\ell| \neq 0$. 
If $||k| - |j| - |\ell|| \geq 1$, then \eqref{ott.4} trivially holds. 
Thus, assume that 
\begin{equation} \label{ott.5}
0 < ||k| - |j| - |\ell|| < 1.
\end{equation} 
Then 
\[
|k| < |j| + |\ell| + 1 \leq 2 (|j| + |\ell|).
\]
Recalling \eqref{ott.6}, if $p \neq 0$, then $|p| \geq 1$, and 
\begin{align*}
\Big| \frac{1}{|k| - |j| - |\ell|} \Big| 
& \leq |(|k| + |j| + |\ell|) (|k| + |j| - |\ell|) (|k| - |j| + |\ell|)| 
\leq C (|j|+|\ell|) |j| |\ell|.
\end{align*}
If $p=0$, then $|k| + |j| - |\ell| = 0$ or $|k| - |j| + |\ell| = 0$. 
If $|k| + |j| - |\ell| = 0$, then $||k| - |j| - |\ell|| = 2 |j| \geq 2$, 
which contradicts \eqref{ott.5}. 
If $|k| - |j| + |\ell| = 0$, then $||k| - |j| - |\ell|| = 2 |\ell| \geq 2$, 
which also contradicts \eqref{ott.5}. 
\end{proof}

\begin{remark} \label{rem:Oppa! Ema's style}
The bound $|p| \geq 1$ in the proof of Lemma \ref{lemma:ottobrata.1} is sharp. 
Indeed, it is enough to show that there are infinitely many choices of $k,j,\ell\in\Z^d\setminus\{0\}$ such that the triple $(|k|^2,|j|^2,|\ell|^2)$ is of the form $(n,n+1,4n+2)$ for some $n\in\N$.
In dimension $d \geq 3$, this is trivial. 

In dimension $d = 2$, recall that the set of integers that can be written as the sum of two squares is closed under multiplication, by Brahmagupta's identity 
\[
(x^2+y^2)(z^2+w^2)=(xz+yw)^2 + (xw-yz)^2. 
\]
Then, it is enough to observe that for $n=4$ the triple $(n,n+1,4n+2)=(4,5,18)=(2^2+0^2,2^2+1^2,3^2+3^2)$ contains only numbers that are the sum of two squares, and that, given any triple $(n,n+1,4n+2)$ that contains only numbers that are the sum of two squares, the triple $(2n^2 + 2n, 2n^2 + 2n +1, 4(2n^2+2n)+2)$ has the same property. Indeed, $2n^2 + 2n +1 = n^2 + (n+1)^2$ and $4(2n^2+2n)+2 = (2n+1)^2 + (2n+1)^2$ are sums of two squares for any $n\in\N$, while $2n^2 + 2n = 2n(n+1)$ is the sum of two squares since it is the product of numbers that are the sum of two squares ($n,n+1$ are sums of two squares by assumption, and $2 = 1^2+1^2$). 
\end{remark}

\begin{lemma} \label{lemma:ottobrata.4} 
For $d \geq 2$, 
the coefficients $a_{11}, c_{11}, f_{11}, a_{12}, b_{12}, c_{12}, d_{12}, f_{12}$ 
in \eqref{def a11 ottobrata}-\eqref{def f12 ottobrata} 
all satisfy the bound 
\[
| \text{coefficient}(k,j,\ell)| \leq C (|j|^4 |\ell|^2 + |j|^2 |\ell|^4)
\]
for some universal constant $C$. 
For $d=1$, they satisfy 
\[
| \text{coefficient}(k,j,\ell)| \leq C |j|^2 |\ell|^2.
\]
\end{lemma}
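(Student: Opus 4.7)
The plan is to control each of the eight coefficients by isolating the small denominators that appear and bounding them via two core tools that eliminate the ``free'' variable $|k|$.

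The first tool is a refinement of the idea already used in Lemma \ref{lemma:ottobrata.1}: for $d \geq 2$ and nonzero $\alpha,\beta \in \Z^d$ with $|\alpha| \neq |\beta|$, the identity
\[
\frac{1}{||\alpha|-|\beta||} = \frac{|\alpha|+|\beta|}{||\alpha|^2-|\beta|^2|}
\]
combined with the integrality $|\alpha|^2-|\beta|^2 \in \Z\setminus\{0\}$ yields $\frac{1}{||\alpha|-|\beta||}\leq |\alpha|+|\beta|$ whenever $||\alpha|-|\beta||<1$. Splitting into the cases $|\beta|\leq 2|\alpha|$ (where $|\alpha|+|\beta|\leq 3|\alpha|$) and $|\beta|>2|\alpha|$ (where $||\alpha|-|\beta||\geq |\alpha|\geq 1$, so the reciprocal is $\leq 1 \leq |\alpha|$) gives the uniform bound
\[
\frac{1}{||\alpha|-|\beta||}\leq 3|\alpha| \quad\text{(and symmetrically, }\leq 3|\beta|\text{)}.
\]
This is precisely what is needed to discard $|k|$ from any denominator of the form $|k|\pm|\alpha|$ with $\alpha\in\{j,\ell\}$: one simply picks whichever side $j$ or $\ell$ one wishes to retain. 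The second tool is Lemma \ref{lemma:ottobrata.1} itself, which already provides $|k|$-free bounds for the three-term denominators $|k|-|j|\pm|\ell|$ appearing in $a_{12}$ and $c_{12}$.

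Then I would go coefficient by coefficient. The coefficients $a_{11}$ and $f_{12}$ involve only the ``safe'' denominators $|j|+|k|$, $|\ell|+|k|$, $|j|+|\ell|$, $|k|+|j|+|\ell|$, each $\geq |j|+|\ell|\geq 2$, so the estimate is immediate from the numerator. For $a_{12}$ and $c_{12}$, a direct substitution of the bounds \eqref{ott.4} and \eqref{ott.1} yields at most $C|j|^2|\ell|^2(|j|+|\ell|)^2 \leq C(|j|^4|\ell|^2+|j|^2|\ell|^4)$. For the remaining $c_{11}$, $f_{11}$, $b_{12}$, $d_{12}$, I apply the first tool once to each factor $\frac{1}{|\alpha|-|k|}$ (choosing the $|j|$- or $|\ell|$-sided bound opportunistically) and once more to the extra $\frac{1}{|\ell|-|j|}$ factor when present. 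The worst case produces terms of total weight six, namely $|j|^3|\ell|^3$, which is absorbed by AM--GM: $|j|^3|\ell|^3\leq\frac12(|j|^4|\ell|^2+|j|^2|\ell|^4)$. Lower-weight contributions are absorbed using $|j|,|\ell|\geq 1$.

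For $d=1$ the situation trivializes, because $|j|$, $|\ell|$, $|k|$ are themselves positive integers, so any nonzero $||\alpha|-|\beta||$ or $||k|-|j|\pm|\ell||$ is already $\geq 1$; every occurrence of Tool~1 or Lemma \ref{lemma:ottobrata.1} is replaced by the bound $1$. A direct inspection of the numerators in \eqref{def a11 ottobrata}--\eqref{def f12 ottobrata} then shows that each coefficient is dominated by a constant multiple of $|j|^2|\ell|^2$ (the internal brackets are uniformly bounded once one divides by the coercive denominators $|j|+|\ell|$, $|k|+|\ell|$, $|k|+|j|+|\ell|$).

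The main obstacle is conceptual rather than computational: spotting that the right substitute for the pointwise bound on $\frac{1}{||\alpha|-|\beta||}$ is not the symmetric $|\alpha|+|\beta|$ of Lemma \ref{lemma:ottobrata.1} but the asymmetric $3\min(|\alpha|,|\beta|)$, because this is what makes $|k|$ disappear without any case analysis on its size. Once this observation is in place, the rest of the proof is a systematic, term-by-term expansion, and the constant $C$ can be made explicit if desired.
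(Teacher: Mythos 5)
Your proof is correct and follows essentially the same approach as the paper: the paper's proof of this lemma applies Lemma \ref{lemma:ottobrata.1} to the three-term denominators in $a_{12}$, $c_{12}$, and repeatedly applies the bound \eqref{denom due interi} (which is precisely your ``Tool~1'', including its asymmetric form $\leq C\min(|\a|,|\b|)$) to the two-term denominators in $c_{11}, f_{11}, b_{12}, d_{12}$, with $a_{11}, f_{12}$ and the $d=1$ case being trivial. Your term-by-term accounting and the final AM--GM absorption of $|j|^3|\ell|^3$ are exactly the computations the paper leaves implicit.
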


\begin{proof} 
Let $d \geq 2$. 
The denominators estimated in Lemma \ref{lemma:ottobrata.1} 
appear only in $a_{12}$ and $c_{12}$. 
The estimate for $|a_{12}|$ directly follows from \eqref{def a12 ottobrata} 
and \eqref{ott.4}. 
To estimate $|c_{12}|$, for $0 < | |k| - |j| + |\ell|| < 1$ use \eqref{ott.1} 
and \eqref{ott.8}, otherwise $|c_{12}| \leq C |j| |\ell| (|j| + |\ell|)$. 
The estimate of $a_{11}, f_{12}$ is trivial.
To estimate $c_{11}, f_{11}, b_{12}, d_{12}$, use repeatedly 
bound \eqref{denom due interi}.
In dimension $d=1$ all the estimates are trivial.
\end{proof}

\begin{lemma} \label{lemma:ottobrata.5}
Let 
\begin{equation} \label{def m1}
m_1 := \begin{cases} 1 & \quad \text{if} \ d = 1, \\ 
2 & \quad \text{if} \ d \geq 2. 
\end{cases} 
\end{equation}
All the operators $\mG \in \{ \mA_{11}, \mC_{11}, \mF_{11}, 
\mA_{12}, \mB_{12}, \mC_{12}, \mD_{12}, \mF_{12} \}$ satisfy 
\begin{equation}
\| \mG[u, v, w, z] h \|_s 
\leq C \| u \|_{m_1} \| v \|_{m_1} \| w \|_{m_1} \| z \|_{m_1} \| h \|_s
\end{equation} 
for all complex functions $u,v,w,z,h$, all real $s \geq 0$, 
where $C$ is a universal constant. 
\end{lemma}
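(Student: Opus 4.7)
The plan is to exploit the fact that each of the operators $\mathcal{G}$ in the list acts diagonally on the Fourier coefficients of $h$. Indeed, by construction each such $\mathcal{G}$ has the form
\[
\mathcal{G}[u,v,w,z]h \;=\; \sum_{k \neq 0} S(k)\, h_k \, e^{ik\cdot x},
\qquad
S(k) \;:=\; \sum_{j,\ell \neq 0} u_j v_{-j} w_\ell z_{-\ell}\, g(j,\ell,k),
\]
where $g(j,\ell,k)$ is the scalar coefficient of $\mathcal{G}$, namely one of the functions $a_{11}, c_{11}, f_{11}, a_{12}, b_{12}, c_{12}, d_{12}, f_{12}$ defined in \eqref{def a11 ottobrata}--\eqref{def f12 ottobrata}. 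Consequently
\[
\| \mathcal{G}[u,v,w,z]h \|_s^2
= \sum_k |k|^{2s}\, |S(k)|^2 \, |h_k|^2
\leq \bigl( \sup_k |S(k)| \bigr)^2 \| h \|_s^2,
\]
so the whole statement reduces to the pointwise bound
$|S(k)| \leq C \|u\|_{m_1} \|v\|_{m_1} \|w\|_{m_1} \|z\|_{m_1}$, uniformly in $k$.

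The second step is to feed in the coefficient estimates proved in Lemma \ref{lemma:ottobrata.4}. Their crucial feature is that the right-hand sides are independent of $|k|$: for $d \geq 2$ one has $|g(j,\ell,k)| \leq C(|j|^4|\ell|^2 + |j|^2|\ell|^4)$, and for $d=1$ one has the sharper $|g(j,\ell,k)| \leq C|j|^2|\ell|^2$. Plugging this into $S(k)$ and using that the sums over $j$ and over $\ell$ factor, a standard Cauchy--Schwarz inequality gives, for $d\geq 2$,
\[
\sum_j |u_j|\,|v_{-j}|\, |j|^4
\leq \Bigl(\sum_j |u_j|^2 |j|^4\Bigr)^{\frac12} \Bigl(\sum_j |v_{-j}|^2 |j|^4\Bigr)^{\frac12}
= \| u \|_2 \, \| v \|_2,
\]
and an analogous bound for $\sum_\ell |w_\ell|\,|z_{-\ell}|\,|\ell|^2 \leq \|w\|_1\|z\|_1 \leq \|w\|_2\|z\|_2$, and symmetrically for the other summand. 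Hence $|S(k)| \leq C \|u\|_2 \|v\|_2 \|w\|_2 \|z\|_2$, which is the desired bound since $m_1 = 2$ when $d \geq 2$. For $d=1$ the same computation using $|g| \leq C|j|^2|\ell|^2$ yields $|S(k)| \leq C \|u\|_1 \|v\|_1 \|w\|_1 \|z\|_1$, matching $m_1 = 1$.

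There is really no hard step here: the hard work has already been done in Lemma \ref{lemma:ottobrata.4}, where the small-divisor denominators $|k|-|j|\pm|\ell|$, $|k|\pm|j|\pm|\ell|$ have been controlled by polynomial expressions in $|j|, |\ell|$ alone (using Brahmagupta-style integrality arguments for $d\geq 2$ and trivial bounds for $d=1$). Once those $k$-independent upper bounds are available, the proof is a one-line Cauchy--Schwarz on Fourier side, with the choice of $m_1$ precisely dictated by the worst coefficient bound in each dimension ($|j|^4$ forces $m_1 = 2$ in dimension $\geq 2$, while $|j|^2$ is enough in dimension $1$).
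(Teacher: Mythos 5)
Your proposal is correct and is exactly the argument the paper intends: the paper disposes of this lemma in one line ("immediate consequence of Lemma \ref{lemma:ottobrata.4}"), and what you have written is precisely the elaboration of that line — observe that each $\mG$ is diagonal in $h$ in Fourier space, pull out $\sup_k|S(k)|$, and bound $S(k)$ by Cauchy–Schwarz using the $k$-independent coefficient estimates of Lemma \ref{lemma:ottobrata.4}, which is also how the analogous Lemma \ref{lemma:stime AC} for $A_{12},C_{12}$ is handled in \cite{Kold}.
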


\begin{proof}
It is an immediate consequence of Lemma \ref{lemma:ottobrata.4}. 
\end{proof}

We recall the definition $\| (w,z) \|_s := \| w \|_s = \| z \|_s$ 
for all pairs $(w,z) = (w, \overline{w}) \in H^s_0(\T^d,c.c.)$ 
of complex conjugate functions.  
By \eqref{def mM}, \eqref{def mK}, \eqref{def mE}, we deduce the following estimates. 
 
\begin{lemma}
\label{lemma:stime mM mK} 
For all $s \geq 0$, all $(u,v) \in H^{m_1}_0(\T^d, c.c.)$, 
$(\a,\b) \in H^s_0(\T^d,c.c.)$ one has
\begin{align} \label{stima mM}
\Big\| \mM(u,v) \begin{pmatrix} \a \\ \b \end{pmatrix} \Big\|_s 
& \leq C \| u \|_{m_1}^4 \|\a \|_s ,
\\
\Big\| \mK(u,v) \begin{pmatrix} \a \\ \b \end{pmatrix} \Big\|_s 
& \leq C \| u \|_{m_1}^3 ( \| u \|_{m_1} \|\a \|_s + \| u \|_s \| \a \|_{m_1} ),
\label{stima mK}
\end{align}
where $m_1$ is defined in \eqref{def m1} and $C$ is a universal constant.
There exists a universal $\d > 0$ such that, for $\| u \|_{m_1} < \d$, 
the operator $(I + \mK(u,v)) : H^{m_1}_0(\T^d, c.c.)$ $\to H^{m_1}_0(\T^d, c.c.)$ 
is invertible, with inverse 
\begin{equation} \label{def tilde mK}
(I + \mK(u,v))^{-1} = I - \mK(u,v) + \tilde \mK(u,v), \quad 
\tilde \mK(u,v) := \sum_{n=2}^\infty (- \mK(u,v))^n,
\end{equation}
satisfying
\begin{equation*} 
\Big\| (I + \mK(u,v))^{-1} \begin{pmatrix} \a \\ \b \end{pmatrix} \Big\|_s 
\leq C (\| \a \|_s + \| u \|_{m_1}^3 \| u \|_s \| \a \|_{m_1}),
\end{equation*}
for all $s \geq 0$. 
\end{lemma}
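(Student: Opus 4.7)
The plan is to reduce everything to the quartic multilinear bound of Lemma \ref{lemma:ottobrata.5}, and then obtain the invertibility and the inverse estimate by a standard Neumann series argument followed by a bootstrap.

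For \eqref{stima mM}, I would write $\mM(u,v) \binom{\alpha}{\beta}$ as the sum of the five quartic contributions coming from \eqref{def mM}. Each resulting summand has the form $\mG[\ast,\ast,\ast,\ast] h$, where the four interior slots are filled by copies of $u$ or $v$ and the outer function $h$ equals $\alpha$ or $\beta$. Using $\| u \|_{m_1} = \| v \|_{m_1}$ and $\| \alpha \|_s = \| \beta \|_s$ (both pairs being complex conjugate), Lemma \ref{lemma:ottobrata.5} applied to each summand immediately yields $C \| u \|_{m_1}^4 \| \alpha \|_s$.

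For \eqref{stima mK}, since $\mK = \mM + \mE$, the only new work concerns $\mE(u,v) \binom{\alpha}{\beta}$. By the formula \eqref{def mE}, each of its twelve summands has the shape $\mG[\ast,\ast,\ast,\ast] w$ with exactly one interior slot occupied by $\alpha$ or $\beta$, the other three interior slots occupied by $u$ or $v$, and the outer slot occupied by $u$ or $v$. Lemma \ref{lemma:ottobrata.5} then gives $C \| u \|_{m_1}^3 \| \alpha \|_{m_1} \| u \|_s$ per summand; combining with the $\mM$ bound produces \eqref{stima mK}. Invertibility of $I + \mK(u,v)$ on $H^{m_1}_0(\T^d,c.c.)$ is then immediate: specializing \eqref{stima mK} to $s = m_1$ gives $\| \mK(u,v) (\alpha,\beta) \|_{m_1} \leq C \| u \|_{m_1}^4 \| \alpha \|_{m_1}$, so choosing $\delta$ with $C \delta^4 \leq 1/2$ makes the series in \eqref{def tilde mK} converge absolutely and produces $\| (I+\mK)^{-1} (\alpha,\beta) \|_{m_1} \leq 2 \| \alpha \|_{m_1}$.

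For the $H^s$ bound on the inverse, I would exploit the resolvent identity $(I + \mK)^{-1} = I - \mK (I + \mK)^{-1}$. Setting $(\gamma,\delta) := (I + \mK(u,v))^{-1}(\alpha,\beta)$ and inserting \eqref{stima mK} together with the just-proved bound $\| (\gamma,\delta) \|_{m_1} \leq 2 \| \alpha \|_{m_1}$ yields
\[
\| (\gamma,\delta) \|_s \leq \| \alpha \|_s + C \| u \|_{m_1}^4 \| (\gamma,\delta) \|_s + 2 C \| u \|_{m_1}^3 \| u \|_s \| \alpha \|_{m_1}.
\]
Shrinking $\delta$ once more so that $C \delta^4 \leq 1/2$ lets me absorb the first $\mK$-term on the left, delivering the claimed estimate. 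The only (mild) obstacle is careful bookkeeping: Lemma \ref{lemma:ottobrata.5} is asymmetric between the four interior slots (which must sit at the low regularity $m_1$) and the outer slot (which carries the high regularity $s$), so one must verify term by term in \eqref{def mE} that the variable measured in $\| \cdot \|_s$ sits in the outer slot.
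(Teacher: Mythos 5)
Your argument is correct and follows essentially the route the paper leaves implicit: the bounds \eqref{stima mM}--\eqref{stima mK} reduce termwise to the quartic multilinear bound of Lemma \ref{lemma:ottobrata.5} via the definitions \eqref{def mM}, \eqref{def mK}, \eqref{def mE} (together with the fact that the $(2,1)$ and $(2,2)$ entries inherit the same bounds by the real structure, being conjugates of the $(1,2)$ and $(1,1)$ entries), and the invertibility and tame bound on the inverse follow from a Neumann series in $H^{m_1}$ combined with the resolvent-identity bootstrap you describe.
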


The nonlinear, continuous map $\Phi^{(5)}$ is invertible in a ball around the origin. 

\begin{lemma}
\label{lemma:Phi5 inv}
There exists a universal constant $\d > 0$ such that, 
for all $(w,z) \in H^{m_1}_0(\T^d, c.c.)$ in the ball $\| w \|_{m_1} \leq \d$, 
there exists a unique $(u,v) \in H^{m_1}_0(\T^d, c.c.)$ such that 
$\Phi^{(5)}(u,v) = (w,z)$, with $\| u \|_{m_1} \leq 2 \| w \|_{m_1}$. 
If, in addition, $w \in H^s_0$ for some $s > m_1$, then $u$ also belongs to $H^s_0$, 
and $\| u \|_s \leq 2 \| w \|_s$.
This defines the continuous inverse map $(\Phi^{(5)})^{-1} : H^s_0(\T^d, c.c.) \cap 
\{ \| w \|_{m_1} \leq \d \}$ $\to H^s_0(\T^d, c.c.)$.
\end{lemma}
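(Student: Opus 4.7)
The plan is to set up the equation $\Phi^{(5)}(u,v) = (w,z)$ as a fixed point problem and apply Banach's contraction principle in $H^{m_1}_0(\T^d, c.c.)$, then bootstrap regularity to obtain the $H^s_0$ bound. Given $(w,z)$ with $\|w\|_{m_1}\leq \d$, define the map
\[
T(u,v) := \begin{pmatrix} w \\ z \end{pmatrix} - \mM(u,v) \begin{pmatrix} u \\ v \end{pmatrix},
\]
so that $\Phi^{(5)}(u,v)=(w,z)$ is equivalent to $T(u,v)=(u,v)$. I would work on the closed ball $\mB := \{(u,v)\in H^{m_1}_0(\T^d,c.c.) : \|u\|_{m_1}\leq 2\|w\|_{m_1}\}$.

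First I would show that $T$ maps $\mB$ into itself. Using \eqref{stima mM} with $s=m_1$, for $(u,v)\in \mB$ one has
\[
\|\mM(u,v)(u,v)\|_{m_1} \leq C\|u\|_{m_1}^4 \|u\|_{m_1} \leq C(2\d)^4 \|u\|_{m_1},
\]
so choosing $\d$ small enough that $C(2\d)^4\leq \tfrac12$ gives $\|T(u,v)\|_{m_1}\leq \|w\|_{m_1}+\tfrac12\|u\|_{m_1}\leq 2\|w\|_{m_1}$. Next, to prove the contraction estimate, I would use multilinearity: writing
\[
\mM(u_1,v_1)(u_1,v_1) - \mM(u_2,v_2)(u_2,v_2)
\]
as a telescoping sum in which the difference $(u_1,v_1)-(u_2,v_2)$ appears in exactly one of the five slots of each multilinear piece, and applying Lemma \ref{lemma:ottobrata.5} to each piece (which is the same kind of estimate as \eqref{stima mM}, with the roles of arguments permuted). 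This yields
\[
\|T(u_1,v_1)-T(u_2,v_2)\|_{m_1}\leq C\d^4 \|(u_1,v_1)-(u_2,v_2)\|_{m_1},
\]
which, after shrinking $\d$ once more, is a strict contraction. Banach's theorem then produces the unique fixed point $(u,v)\in\mB$, giving both existence and uniqueness together with $\|u\|_{m_1}\leq 2\|w\|_{m_1}$. Continuous dependence on $(w,z)$ follows from the standard fixed point argument applied with the parameter $(w,z)$.

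For the higher regularity $s>m_1$: assuming $w\in H^s_0$, I would re-examine the fixed point equation $u = w - (\mM(u,v)(u,v))_1$. Applying \eqref{stima mM} at level $s$,
\[
\|u\|_s \leq \|w\|_s + C\|u\|_{m_1}^4 \|u\|_s \leq \|w\|_s + C(2\d)^4\|u\|_s,
\]
so choosing $\d$ small enough that $C(2\d)^4\leq \tfrac12$ yields $\|u\|_s\leq 2\|w\|_s$. One still has to justify that $u$ actually belongs to $H^s_0$ (rather than only to $H^{m_1}_0$); this can be done by running the same contraction in $H^s_0$ (the same estimates work with $s$ in place of $m_1$ on one factor and $m_1$ on the others), getting a fixed point $\tilde u\in H^s_0\subset H^{m_1}_0$, and invoking the uniqueness in $H^{m_1}_0$ to conclude $\tilde u = u$.

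The last ingredient is checking that $T$ preserves the reality condition $v=\overline u$, so that the fixed point lies in $H^{m_1}_0(\T^d,c.c.)$. This amounts to verifying that if $(u,v)=(u,\overline u)$ and $(w,z)=(w,\overline w)$ then $T(u,v)$ is again a pair of complex conjugates, which in turn reduces to the symmetry of $\mM$ between its two components, inherited from the construction of the blocks $\mA, \mB, \mC, \mD, \mF$ in Section \ref{sec:second step} (the second component of $\mM(u,\overline u)(u,\overline u)$ is, by inspection of the formulas defining the coefficients, the complex conjugate of the first). The main obstacle I anticipate is precisely this bookkeeping of the real structure through the five multilinear blocks; once it is in place, the analytic part is a routine application of the contraction principle using only Lemma \ref{lemma:stime mM mK}.
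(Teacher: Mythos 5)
Your fixed-point plan matches the paper's (unwritten) proof, which simply defers to the analogous Lemma~4.3 of \cite{Kold}; the self-map and contraction estimates in $H^{m_1}_0$ via Lemma~\ref{lemma:stime mM mK} and the telescoping decomposition of $\mM(u_1,v_1)(u_1,v_1)-\mM(u_2,v_2)(u_2,v_2)$ are exactly the right ingredients, and your flag about checking that the real structure $v=\bar u$ is preserved by $\mM$ is a legitimate bookkeeping point (inherited from the complex-conjugation symmetry between $(W_5)_1$ and $(W_5)_2$).

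The one place where the argument as written would actually fail is the higher-regularity step. The map $T$ is \emph{not} a contraction in the $H^s$-metric: when the difference $u_1-u_2$ lands in one of the four multilinear slots of $\mM$, Lemma~\ref{lemma:ottobrata.5} gives a bound of the form $C\|u_1-u_2\|_{m_1}\|u\|_{m_1}^3\|h\|_s$, and on the natural ball $\|h\|_s\leq 2\|w\|_s$ is not small, so the Lipschitz constant in $\|\cdot\|_s$ is $O(\d^3\|w\|_s)$, not $O(\d^4)$. The standard fix, which is presumably what the cited proof does, is to run the Picard iteration starting from $(u^{(0)},v^{(0)})=(w,z)$: the estimate $\|T(u,v)\|_s\leq\|w\|_s+\frac12\|u\|_s$ shows inductively that all iterates satisfy $\|u^{(n)}\|_s\leq2\|w\|_s$, the contraction in $H^{m_1}_0$ gives $u^{(n)}\to u$ in $H^{m_1}_0$, and then weak compactness of the $H^s_0$-ball (equivalently, Fatou's lemma applied to $\sum_j|u^{(n)}_j|^2|j|^{2s}$) yields $u\in H^s_0$ with $\|u\|_s\leq2\|w\|_s$. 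Equivalently one can note that the set $\{\|u\|_{m_1}\leq2\|w\|_{m_1},\ \|u\|_s\leq2\|w\|_s\}$ is complete in the $H^{m_1}$-metric and is mapped into itself by $T$, and apply Banach there. With that adjustment your plan is correct.
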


\begin{proof} 
Using the estimates of Lemma \ref{lemma:stime mM mK}, 
the proof of Lemma \ref{lemma:Phi5 inv} 
is a straightforward adaptation of the proof of Lemma 4.3 in \cite{Kold}.
\end{proof}

We estimate the remainder $W_{\geq 7}(u,v)$. 
By \eqref{W decomp} (which is the definition of $W_{\geq 7}(u,v)$) and
\eqref{def W5}, 
\eqref{def W}, 
\eqref{X+ new},
\eqref{def Phi5}, 
we calculate
\begin{align} 
W_{\geq 7}(u,v) 
& = \tilde \mK(u,v) [1 + \mP(\Phi^{(5)}(u,v))] \mD_1(u,v) 
\notag \\ & \quad 
+ (- \mK(u,v) + \tilde \mK(u,v)) [1 + \mP(\Phi^{(5)}(u,v))] \mD_1 (\mM(u,v)[u,v])
\notag \\ & \quad 
- \mK(u,v) \mP(\Phi^{(5)}(u,v)) \mD_1(u,v) 
\notag \\ & \quad 
+ \mP(\Phi^{(5)}(u,v)) \mD_1 (\mM(u,v)[u,v])
\notag \\ & \quad 
+ (- \mK(u,v) + \tilde \mK(u,v)) [1 + \mP(\Phi^{(5)}(u,v))] X_3^+(u,v)
\notag \\ & \quad 
+ (- \mK(u,v) + \tilde \mK(u,v)) X_5^+(u,v)
\notag \\ & \quad 
+ (I + \mK(u,v))^{-1} [1 + \mP(\Phi^{(5)}(u,v))] [X_3^+(\Phi^{(5)}(u,v)) - X_3^+(u,v)]
\notag \\ & \quad 
+ (I + \mK(u,v))^{-1} [X_5^+(\Phi^{(5)}(u,v)) - X_5^+(u,v)]
\notag \\ & \quad 
+ (I + \mK(u,v))^{-1} X_{\geq 7}^+(\Phi^{(5)}(u,v)),
\label{W geq 7}
\end{align}
where $\tilde \mK(u,v)$ is defined in \eqref{def tilde mK}.
The only unbounded operator appearing in \eqref{W geq 7} is $\mD_1$. 
We rewrite the terms containing $\mD_1$ by using
the ``homological equation'' \eqref{def W5}
(which is, in short, $\mD_1 \mM - \mK \mD_1 = W_5 - X_5^+$)
and the fact that the multiplication by $\mP(\Phi^{(5)}(u,v))$ commutes with $\mK(u,v)$, 
because $\mP(\Phi^{(5)}(u,v))$ is a real scalar function of time only.  
Thus, omitting to write $(u,v)$ everywhere, 
the first two terms in \eqref{W geq 7} become
\begin{align*}
& \tilde \mK (1 + \mP(\Phi^{(5)})) \mD_1
+ (- \mK + \tilde \mK) (1 + \mP(\Phi^{(5)})) \mD_1 \mM
\\ & \quad 
= (1 + \mP(\Phi^{(5)}))
\Big( \sum_{n=2}^\infty (-\mK)^n \mD_1 
+ \sum_{n=1}^\infty (-\mK)^n \mD_1 \mM \Big)
\\ & \quad 
= (1 + \mP(\Phi^{(5)})) \sum_{n=1}^\infty (-\mK)^n (-\mK \mD_1 + \mD_1 \mM)
\\ & \quad 
= (1 + \mP(\Phi^{(5)})) (-\mK + \tilde\mK) (W_5 - X_5^+). 
\end{align*}
Therefore \eqref{W geq 7} becomes
\begin{align} 
W_{\geq 7}(u,v) 
& = [1 + \mP(\Phi^{(5)}(u,v))] (-\mK(u,v) + \tilde\mK(u,v)) (W_5(u,v) - X_5^+(u,v))
\notag \\ & \quad 
+ \mP(\Phi^{(5)}(u,v)) (W_5(u,v) - X_5^+(u,v))
\notag \\ & \quad 
+ (- \mK(u,v) + \tilde \mK(u,v)) [1 + \mP(\Phi^{(5)}(u,v))] X_3^+(u,v) 
\notag \\ & \quad 
+ (- \mK(u,v) + \tilde \mK(u,v)) X_5^+(u,v) 
\notag \\ & \quad 
+ (I + \mK(u,v))^{-1} [1 + \mP(\Phi^{(5)}(u,v))] [X_3^+(\Phi^{(5)}(u,v)) - X_3^+(u,v)]
\notag \\ & \quad 
+ (I + \mK(u,v))^{-1} [X_5^+(\Phi^{(5)}(u,v)) - X_5^+(u,v)]
\notag \\ & \quad 
+ (I + \mK(u,v))^{-1} X_{\geq 7}^+(\Phi^{(5)}(u,v)).
\label{W geq 7 bis}
\end{align}

\begin{lemma} \label{lemma:stime W geq 7}
There exist universal constants $\d > 0$, $C > 0$ such that, 
for all $s \geq 0$, 
for all $(u,v) \in H^{m_1}_0(\T^d, c.c.) \cap H^s_0(\T^d, c.c.)$ 
in the ball $\| u \|_{m_1} \leq \d$, one has 
\begin{align*}
& \| W_{\geq 7}(u,v) \|_s \leq C \| u \|_{m_1}^6 \| u \|_s.
\end{align*}
\end{lemma}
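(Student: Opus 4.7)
The plan is to bound each of the seven summands on the right-hand side of \eqref{W geq 7 bis} separately in $H^s$ norm, showing that each is $\leq C \|u\|_{m_1}^6 \|u\|_s$ provided $\|u\|_{m_1}$ is small. The available inputs are Lemma \ref{lemma:stime mM mK} (estimates on $\mM, \mK, \tilde\mK$ and $(I+\mK)^{-1}$), Lemma \ref{lemma:stima W5} (quintic estimate on $W_5$), Lemma \ref{lemma:20.05.2020} (quintic/septic estimates on $X_5^+, X_{\geq 7}^+$), Lemma \ref{lemma:elementary} (cubic estimate on $X_3^+$ and the scalar bound $0 \leq \mP \leq C\|\cdot\|_{\frac12}^2$), and Lemma \ref{lemma:Phi5 inv} (the bound $\|\Phi^{(5)}(u,v)\|_s \leq 2 \|u\|_s$ for all $s \geq m_1$). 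Note that in every estimate the value of $\mP(\Phi^{(5)}(u,v))$ is a real scalar bounded by $C \|u\|_{m_1}^2$ (since $m_1 \geq \frac12$ and $\Phi^{(5)}$ is bounded), so the factor $1 + \mP(\Phi^{(5)})$ can be absorbed into the constant and contributes at worst an extra $\|u\|_{m_1}^2$ when convenient.

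For the first, third and fourth lines of \eqref{W geq 7 bis}, the tame estimate in \eqref{stima mK} provides a factor of $\|u\|_{m_1}^3(\|u\|_{m_1} \|\alpha\|_s + \|u\|_s \|\alpha\|_{m_1})$ applied, respectively, to $\alpha = W_5 - X_5^+$, to $\alpha = X_3^+$, and to $\alpha = X_5^+$. Combined with the quintic bound from Lemmas \ref{lemma:stima W5}--\ref{lemma:20.05.2020} (giving $\|\alpha\|_s \leq C\|u\|_{m_1}^4 \|u\|_s$ and $\|\alpha\|_{m_1} \leq C\|u\|_{m_1}^5$ in the quintic case) and with the cubic bound \eqref{stima mB3 X3+} (giving $\|\alpha\|_s \leq C\|u\|_{m_1}^2 \|u\|_s$ in the cubic case), each of these terms is dominated by $C\|u\|_{m_1}^6 \|u\|_s$ (with room to spare in the quintic cases). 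The second line is even easier: $\mP(\Phi^{(5)})$ is scalar of size $\|u\|_{m_1}^2$ and multiplies $W_5 - X_5^+$, producing directly $C \|u\|_{m_1}^6 \|u\|_s$.

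For the fifth and sixth lines, the main work is a Lipschitz/telescoping estimate on $X_3^+$ and $X_5^+$ along the path from $(u,v)$ to $\Phi^{(5)}(u,v) = (u,v) + \mM(u,v)[u,v]$. Since $X_3^+$ is an explicit cubic map (see \eqref{X3+ 1}--\eqref{X3+ 2}), a trilinear tame estimate gives
\[
\| X_3^+(\Phi^{(5)}(u,v)) - X_3^+(u,v) \|_s
\leq C \|u\|_{m_1}^2 \, \| \mM(u,v)[u,v] \|_s
+ C \|u\|_{m_1} \|u\|_s \, \| \mM(u,v)[u,v] \|_{m_1},
\]
and \eqref{stima mM} yields $\| \mM(u,v)[u,v] \|_s \leq C \|u\|_{m_1}^4 \|u\|_s$, so this difference is $\leq C\|u\|_{m_1}^6 \|u\|_s$. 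The analogous Lipschitz estimate for $X_5^+$ is obtained from the explicit formula \eqref{X+ 5}, writing $X_5^+$ as a sum of multilinear expressions in $(w,z)$ and applying the usual trilinear/pentalinear tame product inequalities, with a gain of $\|u\|_{m_1}^4$ coming again from \eqref{stima mM}; this gives difference $\leq C\|u\|_{m_1}^{8} \|u\|_s$. Applying $(I+\mK)^{-1}$ (which is tame by Lemma \ref{lemma:stime mM mK}) preserves these bounds. I expect the Lipschitz estimate on $X_5^+$ to be the most laborious step, but conceptually routine once one sees that every single term in \eqref{X+ 5} is a fixed multilinear operator in the arguments for which the multilinear estimate of Lemma \ref{lemma:ottobrata.5} (together with the simpler estimates of Lemma \ref{lemma:stime AC} for $A_{12}, C_{12}$) applies.

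Finally, the seventh line $(I+\mK)^{-1} X_{\geq 7}^+(\Phi^{(5)}(u,v))$ is handled directly: Lemma \ref{lemma:20.05.2020} gives $\|X_{\geq 7}^+(\Phi^{(5)}(u,v))\|_s \leq C\|\Phi^{(5)}(u,v)\|_{m_0}^6 \|\Phi^{(5)}(u,v)\|_s$, which by Lemma \ref{lemma:Phi5 inv} (noting $m_0 \leq m_1$) is $\leq C\|u\|_{m_1}^6 \|u\|_s$, and then $(I+\mK)^{-1}$ is applied. Summing the bounds for all seven terms and choosing $\delta>0$ small enough to make the geometric series in the definition of $\tilde\mK$ and $(I+\mK)^{-1}$ converge (which requires only $\|u\|_{m_1} < \delta$ as in Lemma \ref{lemma:stime mM mK}) yields the desired estimate.
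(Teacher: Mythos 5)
Your proposal is correct and follows exactly the same route as the paper: split the remainder via \eqref{W geq 7 bis} and bound each of the seven terms using Lemmas \ref{lemma:elementary}, \ref{lemma:20.05.2020}, \ref{lemma:stima W5}, \ref{lemma:stime mM mK} and \ref{lemma:Phi5 inv} (the paper's proof is just a one-line pointer to these ingredients). A tiny misattribution: the forward bound $\|\Phi^{(5)}(u,v)\|_s \leq 2\|u\|_s$ comes directly from \eqref{stima mM} in Lemma \ref{lemma:stime mM mK}, not from Lemma \ref{lemma:Phi5 inv}, which gives the reverse inequality for the inverse map.
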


\begin{proof} Use formula \eqref{W geq 7 bis} 
and Lemmas 
\ref{lemma:elementary},
\ref{lemma:20.05.2020}, 
\ref{lemma:stima W5},
\ref{lemma:stime mM mK},
\ref{lemma:Phi5 inv}.
\end{proof}

\bigskip

\noindent
\emph{Energy estimate.}
By \eqref{W decomp}, 
the energy estimate for the system $\pa_t (u,v) = W(u,v)$ 
on the real subspace $\{ v = \bar u \}$
becomes 
\begin{equation} \label{energy.est}
\pa_t ( \| u \|_s^2 ) 
= \la \Lm^s \pa_t u , \Lm^s v \ra + \la \Lm^s u, \Lm^s \pa_t v \ra 
= Z_6(u) + Z_{\geq 8}(u)
\end{equation}
where
\begin{align*}
Z_6(u) & 
:= \la \Lm^s (W_5)_1(u,v) , \Lm^s v \ra + \la \Lm^s u, \Lm^s (W_5)_2(u,v) \ra, 
\\ 
Z_{\geq 8}(u)  
& := \la \Lm^s (W_{\geq 7})_1(u,v) , \Lm^s v \ra + \la \Lm^s u, \Lm^s (W_{\geq 7})_2(u,v) \ra, 
\end{align*}
because the term $\big( 1 + \mP(\Phi^{(5)}(u,v)) \big) \big( \mD_1 (u,v) + X^+_3(u,v) \big)$ 
gives zero contribution.  
By Lemma \ref{lemma:stime W geq 7}, one has
\[
| Z_{\geq 8}(u)| \leq C \| u \|_{m_1}^6 \| u \|_s^2.
\]
By \eqref{W5 comp 1 after NF}-\eqref{W5 comp 2 after NF}, we calculate 
\begin{align}
Z_6(u) & = \frac{3i}{32} 
\sum_{\begin{subarray}{c} j,\ell,k \\ |k| = |j| + |\ell| \end{subarray}} 
( u_j u_{-j} u_\ell u_{-\ell} v_k v_{-k} 
- v_j v_{-j} v_\ell v_{-\ell} u_k u_{-k} ) 
|j| |\ell| |k|^{1+2s}
\label{18dic.1} \\ & \quad 
+ \frac{3i}{16}
\sum_{\begin{subarray}{c} j,\ell,k \\ |k| = |j| - |\ell| \end{subarray}} 
( u_j u_{-j} v_\ell v_{-\ell} v_k v_{-k} 
- v_j v_{-j} u_\ell u_{-\ell} u_k u_{-k} ) 
|j| |\ell| |k|^{1+2s},
\label{18dic.2}
\end{align}
which is the sum of the second and the fourth sums in both $(W_5)_1$ and $(W_5)_2$, 
because the first and third sums in $(W_5)_1$ and $(W_5)_2$ cancel out.
Then, we note that the sum over $|k| = |j| - |\ell|$ in \eqref{18dic.2}, 
namely $|j| = |k| + |\ell|$, becomes, after renaming the indices, a sum over 
the same set of indices as the sum in \eqref{18dic.1}. 
Hence 
\begin{align*}
Z_6(u) & = \frac{3i}{32} \sum_{\begin{subarray}{c} j,\ell,k \\ |k| = |j| + |\ell| \end{subarray}} 
( u_j u_{-j} u_\ell u_{-\ell} v_k v_{-k} 
- v_j v_{-j} v_\ell v_{-\ell} u_k u_{-k} ) 
|j| |\ell| |k|   
\big( |k|^{2s} - 2 |j|^{2s} \big),
\end{align*}
namely, symmetrizing $j \leftrightarrow \ell$, 
\begin{align}
Z_6(u) & = \frac{3i}{32} \sum_{\begin{subarray}{c} j,\ell,k \\ |k| = |j| + |\ell| \end{subarray}} 
( u_j u_{-j} u_\ell u_{-\ell} v_k v_{-k} 
- v_j v_{-j} v_\ell v_{-\ell} u_k u_{-k} ) 
|j| |\ell| |k|
\big( |k|^{2s} - |j|^{2s} - |\ell|^{2s} \big).
\label{18dic.4}
\end{align}
For $s = \frac12$, one has 
$|k|^{2s} - |j|^{2s} - |\ell|^{2s} 
= |k| - |j| - |\ell| 
= 0$ 
over the sum, and therefore 
$Z_6(u)$ vanishes for $s=\frac12$. 
Hence $\la \Lm u, v \ra$ 
is a prime integral up to homogeneity order $8$, 
namely  
\[
| \pa_t ( \| u \|_{\frac12}^2 ) |
= | \pa_t \la \Lm u, v \ra |
\leq C \| u \|_{m_1}^6 \| u \|_{\frac12}^2. 
\]
This is not surprising, since $s = \frac12$ in \eqref{energy.est} 
corresponds to the norm in the energy space $H^1 \times L^2$ 
of the original variables, and that norm is controlled by the Hamiltonian.

For $s \neq \frac12$, in general the term $Z_6(u)$ is not zero. 
For example, for $s = 1$ one has 
$|k|^{2s} - |j|^{2s} - |\ell|^{2s} 
= (|j| + |\ell|)^2 - |j|^2 - |\ell|^2
= 2 |j| |\ell|$.

\bigskip

\noindent
\emph{Spheres in Fourier space.}  
We observe that the system 
(or some relevant aspects of it concerning the evolution of Sobolev norms)
can be described by taking sums over all frequencies $k \in \Z^d$ 
with a fixed (Euclidean) length $|k| = \lm$. 
For each $\lm$ in the set
\begin{equation} \label{def set lm}
\Gamma := \{ |k| : k \in \Z^d, \ k \neq 0 \} \subset [1,\infty),
\end{equation}
let
\[
S_\lm := \sum_{k : |k| = \lm} |u_k|^2 
= \sum_{k : |k| = \lm} u_k v_{-k}, 
\quad 
B_\lm := \sum_{k : |k| = \lm} u_k u_{-k}, 
\]
so that 
\[
\overline{B_\lm} = \sum_{k : |k| = \lm} v_k v_{-k},
\quad \ 
\| u \|_s^2 = \sum_{\lm \in \Gamma} \lm^{2s} S_\lm. 
\]
For each $\lm \in \Gamma$, $S_\lm \geq 0$ and $B_\lm \in \C$.
By \eqref{eq for uk}-\eqref{eq for vk},
neglecting the terms from $W_{\geq 7}$, 
one has 
\begin{align}
\pa_t S_\lm 
& 
= \frac{3 i}{32}  \sum_{\begin{subarray}{c} \a,\b \in \Gamma \\  \a + \b = \lm \end{subarray}} 
( B_\a B_\b \overline{B_\lm}  
- \overline{B_\a} \overline{B_\b} B_\lm ) 
\a \b \lm
+ \frac{3 i}{16}  \sum_{\begin{subarray}{c} \a,\b \in \Gamma \\  \a - \b = \lm \end{subarray}}
( B_\a \overline{B_\b} \overline{B_\lm} 
- \overline{B_\a} B_\b B_\lm ) 
\a \b \lm 	
\label{shell.1}
\end{align}
and
\begin{align}
\pa_t B_\lm 
& = - 2 i (1 + \mP) \Big( \lm + \frac14 \lm^2 S_\lm \Big) B_\lm 
+ \frac{i}{16} \sum_{\a \in \Gamma} |B_\a|^2 B_\lm \a^2 
\Big( \frac{1}{\a + \lm} - \frac{1 - \delta_\a^\lm}{\a - \lm} \Big)  
\notag \\ & \quad 
+ \frac{3i}{16} \sum_{\begin{subarray}{c} \a,\b \in \Gamma \\  \a + \b = \lm \end{subarray}} 
B_\a B_\b S_\lm \lm \a \b
+ \frac{i}{8} \sum_{\a \in \Gamma} S_\a S_\lm B_\lm \lm^2 \a 
\Big( 6 + \frac{\a}{\a + \lm} 
+ \frac{\a (1 - \delta_\a^\lm)}{\a - \lm} \Big) 
\notag \\ & \quad 
+ \frac{3i}{8} \sum_{\begin{subarray}{c} \a,\b \in \Gamma \\  \a - \b = \lm \end{subarray}} 
B_\a \overline{B_\b} S_\lm \a \b \lm.
\label{shell.2}
\end{align}
Equations \eqref{shell.1}-\eqref{shell.2} form a closed system in the 
variables $(S_\lm, B_\lm)_{\lm \in \Gamma}$. 
They play the role of an ``effective equation'' 
for the dynamics of the Kirchhoff equation. 
This will be the starting point for further analysis 
in the forthcoming paper \cite{K boot}.

\bigskip

\begin{small}

\textbf{Pietro Baldi}

Dipartimento di Matematica e Applicazioni ``R. Caccioppoli''

Universit\`a di Napoli Federico II

Via Cintia, Monte S. Angelo

80126 Napoli

pietro.baldi@unina.it

\bigskip

\textbf{Emanuele Haus}

Dipartimento di Matematica e Fisica

Universit\`a Roma Tre

Largo San Leonardo Murialdo 1

00146 Roma

ehaus@mat.uniroma3.it

\end{small}

\end{document}